\documentclass[pdfamsfonts]{amsart}
\linespread{0,9}
\usepackage[english]{babel}
\usepackage{inputenc}
\usepackage{mathabx}
\usepackage{amsmath,amsthm,amsfonts,amssymb,amscd}
\usepackage{mathpazo}
\usepackage{amssymb,amsfonts}
\usepackage[all,arc]{xy}
\usepackage{enumerate}
\usepackage{mathrsfs}
\usepackage{tgtermes}
\usepackage{marginnote}
\usepackage{mathtools}
\usepackage{amsthm,thmtools,xcolor}
\DeclarePairedDelimiter\ceil{\lceil}{\rceil}

\usepackage[all]{xy}
\usepackage{tikz-cd}
\usetikzlibrary{cd}

\numberwithin{equation}{section}
\theoremstyle{plain}
\newtheorem{thm}{Theorem}[section]

\newtheorem{prop}{Proposition}[section]
\newtheorem{lem}{Lemma}[section]

\newtheorem{thmx}{Theorem}

\theoremstyle{definition}

\theoremstyle{remark}
\newtheorem{rem}{\textit{Remark}}

\usepackage{color}

\usepackage[colorlinks = true,
linkcolor = blue,
urlcolor  = blue,
citecolor = blue,
anchorcolor = blue]{hyperref}
\usepackage{hyperref}
\makeatletter
\let\c@equation\c@thm
\makeatother
\numberwithin{equation}{section}
\DeclareMathOperator{\ex}{\mathrm{e}}
\DeclareMathOperator{\dist}{\mathrm{dist}}

\DeclareMathOperator{\supp}{supp}
\bibliographystyle{plain}
\newcommand\underrel[3][]{\mathrel{\mathop{#3}\limits_{%
			\ifx c#1\relax\mathclap{#2}\else#2\fi}}}
\title[Fractional  KdV equation]{On the propagation of regularity for solutions of the fractional Korteweg-de Vries equation}

\author{Argenis. J. Mendez}

\address{Instituto Nacional  de  Matematica Pura e  Aplicada, Rio de Janeiro, RJ, Brasil}

\email{amendez@impa.br}

\thanks{This work was partially supported by  CNPq, Brazil.}


\subjclass{Primary: 35Q53. Secondary: 35Q05}

\keywords{Fractional KdV equation, Well-posedness, Propagation of regularity, Smoothing effect}
\date{February, 2018.}
\commby{Argenis Mendez}
\begin{document}
\begin{abstract}
We consider the initial value problem  (IVP)  for the fractional Korteweg-de Vries equation (fKdV)
\begin{equation}\label{abstracteq1}
\left\{
\begin{array}{ll}
\partial_{t}u-D_{x}^{\alpha}\partial_{x}u+u\partial_{x}u=0, & x,t\in\mathbb{R},\,0<\alpha<1, \\
u(x,0)=u_{0}(x).&  \\
\end{array} 
\right.
\end{equation}
It has been shown that the  solutions to  certain dispersive equations  satisfy the propagation of regularity phenomena. More precisely,  it deals in determine whether regularity of the initial data on the
right hand side of the real line  is propagated  to the left hand side by the flow solution. This property was found originally in solutions of Korteweg-de Vries (KdV) equation and it  has been also  verified in other dispersive equations   as the Benjamin-Ono (BO) equation.  

Recently, it has been shown that the solutions of the dispersive generalized Benjamin-Ono (DGBO) equation,  this is $\alpha\in (2,3)$ in \eqref{abstracteq1}; also satisfy the propagation of regularity phenomena. This is achieved by introducing a commutator decomposition to handle the dispersive part in the equation.  Following the approach used in the DGBO, we prove that the  solutions of the fKdV also satisfies the propagation of regularity phenomena. Consequently, this type of regularity travels with infinite speed to its left as time evolves.
\end{abstract}
\maketitle
	\textbf{Key words.} Fractional KdV. Propagation. Regularity.
\section{Introduction}
This paper provides a detailed study on  the propagation of regularity   satisfied   by solutions  for the so-called fractional Korteweg-de Vries (fKdV) equation
\begin{equation}\label{e1}
\left\{
\begin{array}{ll}
\partial_{t}u-D_{x}^{\alpha}\partial_{x}u+u\partial_{x}u=0, & x,t\in\mathbb{R},\,0<\alpha<1, \\
u(x,0)=u_{0}(x),&  \\
\end{array} 
\right.
\end{equation}
where $u=u(x,t)$ represents a real valued function and the fractional derivative operator $D_{x}^{s}$ is defined  via its Fourier transform as 
\begin{equation*}
\widehat{D_{x}^{s}f}(\xi)=c_{s}|\xi|^{s}\widehat{f}(\xi)\quad \mbox{for}\quad s>0.
\end{equation*} 
 In the case $\alpha=1,$  the operator $D_{x}$  can be written as $D_{x}=\mathcal{H}\partial_{x}$ where $\mathcal{H}$ denotes the Hilbert transform,
\begin{equation*}
(\mathcal{H}f)(x)=\frac{1}{\pi}\lim_{\epsilon\rightarrow 0}\int_{|y|\geq \epsilon}\frac{f(x-y)}{y}\,\mathrm{d}y.
\end{equation*} 
 Also, for this particular value of $\alpha,$ the equation in \eqref{e1} becomes an integral equation widely studied and known in the literature as the Benjamin-Ono equation (BO) i.e,
  \begin{equation*}
  \partial_{t}u-\mathcal{H}\partial_{x}^{2}u+u\partial_{x}u=0, \quad  x,t\in\mathbb{R}.
  \end{equation*}
  
  
  The IVP \eqref{e1} has been the focus of attention of recent studies where the local well posedness theory has been approached as well as  the  existence of solitary wave solutions,  the stability properties of ground states and numerical simulations, these issues have been  largely carried out by  Linares et al. \cite{LIPICA2},  \cite{LIPICLA}, Frank and Lenzmann \cite{Lezman}, 
  Klein and Saut \cite{Kleins}, Molinet, Pilod and Vento \cite{mpv}, and Angulo  \cite{angulo}.
  
  The proof of our  main result is based on a weighted energy estimate, where the integrability  of the term $\partial_{x}u$ is  fundamental. So that, for  the applicability of this method we  will consider local well-posedness (LWP) results  in the Sobolev scale  with the minimal regularity that ensures $\partial_{x}u\in L^{1}([0,T]).$ To our knowledge the best result that fits with our requirements was obtained by Linares, Pilod and Saut \cite{LIPICLA}  when  studying  weakly  dispersive perturbations of Burger's equation. More precisely, they obtained  the following:
\begin{thm}\label{lt}
	Let $0<\alpha<1.$      Define $s(\alpha):=\frac{3}{2}-\frac{3\alpha}{8}$ and assume that  $s>s(\alpha)$.
	Then, for every  $u_{0}\in H^{s}(\mathbb{R}),$ there exists a positive  time $T=T(\|u_{0}\|_{s,2})$ (which can be chosen as a non-increasing functions  of its argument) and a unique  solution  $u$  to (\ref{e1}) such that 
	\begin{equation}
	u\in C\left([0,T]: H^{s}(\mathbb{R})\right)\quad \mbox{and}\quad  \partial_{x}u\in L^{1}\left([0,T]: L^{\infty}(\mathbb{R})\right).
	\end{equation}
\end{thm}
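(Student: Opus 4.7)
The plan is to prove this local well-posedness statement by combining a classical energy estimate in $H^s$ with a refined Strichartz estimate adapted to the weakly dispersive symbol $\xi|\xi|^\alpha$, following the Koch--Tzvetkov paradigm. The difficulty peculiar to $0<\alpha<1$ is that the dispersion is too weak to close a pure Bourgain-space contraction, while a pure energy method (which would need $s>3/2$ by Sobolev embedding to control $\|\partial_xu\|_\infty$) is also wasteful; the threshold $s(\alpha)=3/2-3\alpha/8$ is precisely the break-even point at which the gain from the refined Strichartz bound is just enough to close the energy estimate.

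First I would regularize the equation by a vanishing-viscosity scheme $\partial_tu^\mu+\mu\partial_x^4u^\mu-D_x^\alpha\partial_xu^\mu+u^\mu\partial_xu^\mu=0$ with $\mu>0$, which yields smooth local solutions $u^\mu$ by standard parabolic theory. Applying $J^s=(1-\partial_x^2)^{s/2}$, pairing with $J^su^\mu$ and using skew-adjointness of $D_x^\alpha\partial_x$ together with the Kato--Ponce commutator estimate
\begin{equation*}
\bigl|\bigl([J^s,u^\mu]\partial_xu^\mu,J^su^\mu\bigr)_{L^2}\bigr|+\bigl|\bigl(u^\mu J^s\partial_xu^\mu,J^su^\mu\bigr)_{L^2}\bigr|\lesssim \|\partial_xu^\mu\|_{L^\infty}\|u^\mu\|_{H^s}^2,
\end{equation*}
gives, via Gronwall, the $\mu$-uniform bound $\|u^\mu(t)\|_{H^s}^2\le\|u_0\|_{H^s}^2\exp\bigl(C\int_0^t\|\partial_xu^\mu(\tau)\|_{L^\infty}d\tau\bigr)$. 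The task then reduces to estimating $\|\partial_xu^\mu\|_{L^1_TL^\infty_x}$ by $\|u^\mu\|_{L^\infty_TH^s}$.

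The core of the argument is the refined Strichartz estimate. Decomposing $u^\mu=\sum_N P_Nu^\mu$ in Littlewood--Paley pieces, I would partition $[0,T]$ into subintervals $I_j$ of length $\delta_N\sim N^{-\alpha}$ so that the dispersive time-scale at frequency $N$ matches the interval length. On each $I_j$, write $P_Nu^\mu(t)=U_\alpha(t-t_j)P_Nu^\mu(t_j)+\int_{t_j}^tU_\alpha(t-\tau)P_N(u^\mu\partial_xu^\mu)(\tau)d\tau$, and apply the sharp linear Strichartz bound obtained by stationary phase on the symbol $\xi|\xi|^\alpha$,
\begin{equation*}
\|U_\alpha(t)P_Nf\|_{L^2_tL^\infty_x}\lesssim N^{(1-\alpha)/2}\|P_Nf\|_{L^2},
\end{equation*}
on each $I_j$. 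Summing in $j$ (there are $T\delta_N^{-1}\sim TN^\alpha$ intervals) and using Minkowski to go from $L^2_t$ to $L^1_t$ on the full interval yields an estimate
\begin{equation*}
\|\partial_xP_Nu^\mu\|_{L^1_TL^\infty_x}\lesssim T^{\theta}N^{3/2-3\alpha/8-s}\|u^\mu\|_{L^\infty_TH^s}+\text{(nonlinear remainder)},
\end{equation*}
which sums in $N$ precisely when $s>s(\alpha)$; the nonlinear remainder is absorbed by bootstrap.

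Combining the two estimates closes a bootstrap on a time $T=T(\|u_0\|_{H^s})$ uniform in $\mu$. Weak-$*$ compactness and the Aubin--Lions lemma then extract a limit $u\in C([0,T]:H^s)$ with $\partial_xu\in L^1_TL^\infty_x$. Uniqueness is obtained by an $L^2$ energy estimate on the difference $w=u_1-u_2$ of two solutions, controlling $w\partial_xu_1+u_2\partial_xw$ through $\|\partial_xu_i\|_{L^1_TL^\infty}$ and Gronwall. Continuous dependence and the property $u\in C([0,T]:H^s)$ (as opposed to only weak continuity) are recovered by a Bona--Smith argument using frequency-truncated initial data. The main obstacle throughout is the refined Strichartz step: the bookkeeping of the dyadic subdivision must be sharp enough to produce the exponent $3/2-3\alpha/8$, and any slack there would fail to close with the energy estimate.
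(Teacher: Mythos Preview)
The paper does not prove this theorem; it is quoted from Linares, Pilod and Saut \cite{LIPICLA} and used as a black box throughout. So there is no ``paper's own proof'' to compare against --- the present article only invokes the conclusion (existence, uniqueness, persistence in $H^s$, and the crucial bound $\partial_x u\in L^1_T L^\infty_x$) as input for the weighted energy estimates that establish propagation of regularity.

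That said, your sketch is a faithful outline of the strategy actually carried out in \cite{LIPICLA}: an $H^s$ energy estimate closed via the Kato--Ponce commutator bound, combined with a refined Strichartz estimate obtained by the Koch--Tzvetkov frequency-dependent time subdivision applied to the weakly dispersive group $e^{-t D_x^\alpha\partial_x}$. The threshold $s(\alpha)=3/2-3\alpha/8$ is indeed what emerges from balancing the $N^{(1-\alpha)/4}$ loss in the linear $L^4_tL^\infty_x$ Strichartz estimate against the number $\sim TN^\alpha$ of subintervals and the derivative in $\partial_x u$. Your use of a parabolic regularization to justify the formal manipulations is fine (LPS use a closely related smoothing), and the Bona--Smith step for strong continuity and continuous dependence is also what is done there. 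The only place to be careful is the ``nonlinear remainder'' in the Duhamel term on each subinterval: you need $\|P_N(u\partial_x u)\|_{L^1(I_j;L^2)}$ controlled by $\delta_N\|\partial_x u\|_{L^\infty}\|u\|_{H^s}$-type quantities so that summation in $j$ and then in $N$ still converges for $s>s(\alpha)$; this is routine but must be written out to close the bootstrap.
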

It is also important to point out  that several improvements  have been made in the well-posedness theory of the IVP  \eqref{e1}. In this sense, we shall mention the recent work announced by  Molinet, Pilod and Vento \cite{mpv}, where  the authors prove that the IVP \eqref{e1} is locally well-posedness in the Sobolev space $H^{s}(\mathbb{R}),$  with $s>\frac{3}{2}-\frac{5\alpha}{4}.$ Additionally, in \cite{mpv}  is proved   Global well-posedness (GWP) in the energy space $H^{\alpha/2}(\mathbb{R}),$ as long as $\alpha>\frac{6}{7}.$  

Most of these results are based in a method introduced by  Molinet and Vento \cite{mv} to obtain energy
estimates at low regularity for strongly nonresonant dispersive equations. However, the LWP of the IVP \eqref{e1} in the space  $H^{s}(\mathbb{R}),$\, $s>\frac{3}{2}-\frac{5\alpha}{4},$ does not ensure the boundeness  of   $\|\partial_{x}u\|_{L^{1}_{T}L^{\infty}_{x}}$ which is crucial in our analysis as  mentioned above.

Since the space where we will consider solutions has already been described, we will proceed to describe  our main result. This  is inspired in a property   found originally  by Isaza, Linares and Ponce \cite{ILP1} in solutions of the KdV. More precisely, the authors prove that regularity on the right hand side  of the data travels forward in time with infinite speed. This  property  ''\emph{propagation of regularity phenomena principle}''  was also  studied  by Isaza et al. \cite{ILP2} in solutions of the Benjamin-Ono equation, where the term that providing  dispersion is  more difficult to handle  due to the presence of the Hilbert transform. 

Recently, in \cite{AJ}, we  studied the  propagation of regularity in solutions of the dispersion generalized Benjamin-Ono equation  i.e  
\begin{equation}\label{DGBO}
\partial_{t}u-D_{x}^{\alpha+1}\partial_{x}u+u\partial_{x}u=0, \quad  x,t\in\mathbb{R}, \alpha \in (0,1)
\end{equation}
where a combination of the techniques introduced in \cite{ILP1} and \cite{ILP2} together with  a    commutator decomposition provided by Ginibre and Velo \cite{GV2}; allowed us  to show that real solutions associated to the IVP \eqref{DGBO} also satisfies  the propagation of regularity phenomena. More precisely, this is summarized in the following:
\begin{thm}\label{DGBOT}
	Let $u_{0}\in H^{s}(\mathbb{R})$ with $s=\frac{3-\alpha}{2},$\, 
	and $u=u(x,t),$  be the corresponding solution  of the IVP (\ref{DGBO}).
	
	If  for some $x_{0}\in \mathbb{R}$  and for some $m\in\mathbb{Z}^{+},\,m\geq  2,$
	\begin{equation}\label{eq106}
	\partial_{x}^{m}u_{0}\in L^{2}\left(\left\{x\geq x_{0}\right\}\right),
	\end{equation}
	then for any $v>0,T>0,\epsilon>0$ and    \,$\tau>4\epsilon$
	\begin{equation}\label{eqc1}
	\begin{split}
	&\sup_{0\leq t\leq T}\int_{x_{0}+\epsilon-vt}^{\infty}(\partial_{x}^{j}u)^{2}(x,t)\mathrm{d}x
	+ \int_{0}^{T}\int_{x_{0}+\epsilon-vt}^{x_{0}+\tau-vt}\left(D_{x}^{\frac{\alpha+1}{2}}\partial_{x}^{j}u\right)^{2}(x,t)\mathrm{d}x\,\mathrm{d}t\\
	&		+\int_{0}^{T}\int_{x_{0}+\epsilon-vt}^{x_{0}+\tau-vt}\left(D_{x}^{\frac{\alpha+1}{2}}\mathcal{H}\partial_{x}^{j}u\right)^{2}(x,t)\mathrm{d}x\,\mathrm{d}t\leq c 
	\end{split}
	\end{equation}	
	for $j=1,2,\dots,m$ with $c=c\left(T;\epsilon;v;\alpha;\|u_{0}\|_{H^{s}};\left\|\partial_{x}^{m}u_{0}\right\|_{L^{2}((x_{0},\infty))}\right)>0.$
	
	If in addition to (\ref{eq106}) there exists $x_{0}\in \mathbb{R}^{+}$ 
	\begin{equation}\label{clave1}
	D_{x}^{\frac{1-\alpha}{2}}\partial_{x}^{m}u_{0}\in L^{2}\left(\left\{x\geq x_{0}\right\}\right)
	\end{equation}
	then for any $v\geq 0,\,\epsilon>0$ and $\tau>4\epsilon$
	\begin{equation}\label{l1}
	\begin{split}
	&\sup_{0\leq t\leq  T}\int_{x_{0}+\epsilon-vt}^{\infty}\left(D_{x}^{\frac{1-\alpha}{2}}\partial_{x}^{m}u\right)^{2}(x,t)\,\mathrm{d}x
	+\int_{0}^{T}\int_{x_{0}+\epsilon-vt}^{x_{0}+\tau-vt}\left(\partial_{x}^{m+1}u\right)^{2}(x,t)\,\mathrm{d}x\,\mathrm{d}t\\
	&+\int_{0}^{T}\int_{x_{0}+\epsilon-vt}^{x_{0}+\tau-vt}\left(\partial_{x}^{m+1}\mathcal{H}u\right)^{2}(x,t)\,\mathrm{d}x\,\mathrm{d}t\leq c
	\end{split}
	\end{equation}
	with $c=c\left(T;\epsilon;v;\alpha;\|u_{0}\|_{H^{s}};\left\|D_{x}^{\frac{1-\alpha}{2}}\partial_{x}^{m}u_{0}\right\|_{L^{2}((x_{0},\infty))}\right)>0.$
\end{thm}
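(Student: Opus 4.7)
The plan is to prove Theorem \ref{DGBOT} by induction on $j=1,\dots,m$ via weighted energy estimates with smooth, moving cutoffs, following the template of Isaza--Linares--Ponce and incorporating the Ginibre--Velo symmetric commutator decomposition to handle the non-local dispersive operator $D_x^{\alpha+1}\partial_x$. Fix parameters $\epsilon>0$, $\tau>4\epsilon$, $v\geq 0$, $T>0$. Introduce a nondecreasing $\chi_{\epsilon,\tau}\in C^{\infty}(\mathbb{R})$ with $\chi_{\epsilon,\tau}\equiv 0$ on $(-\infty,\epsilon/4]$, $\chi_{\epsilon,\tau}\equiv 1$ on $[\epsilon,\infty)$, and set $\chi_j(x,t)=\chi_{\epsilon_j,\tau_j}(x-x_0+vt)$, where $\epsilon_j$ decreases and $\tau_j$ increases along the iteration so that the support of $\chi_j'$ sits strictly inside the region where the lower-order localized estimates from the previous step are available. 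The base case $j=1$ uses $\partial_x u_0\chi_1(\cdot,0)\in L^2$, which follows from $\partial_x^m u_0\in L^2(\{x\geq x_0\})$ by interpolation, together with Theorem \ref{lt} to supply $\partial_x u\in L^1_T L^\infty_x$.

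Applying $\partial_x^j$ to \eqref{DGBO}, multiplying by $2(\partial_x^j u)\chi_j$, and integrating in $x$ gives
\begin{equation*}
\frac{d}{dt}\int (\partial_x^j u)^2\chi_j\,dx - v\int (\partial_x^j u)^2\chi_j'\,dx - 2\int \partial_x^j(D_x^{\alpha+1}\partial_x u)(\partial_x^j u)\chi_j\,dx + 2\int \partial_x^j(u\partial_x u)(\partial_x^j u)\chi_j\,dx = 0.
\end{equation*}
For the dispersive contribution, the Ginibre--Velo identity rewrites the symmetrized commutator $\int[D_x^{\alpha+1}\partial_x,\chi_j](\partial_x^j u)(\partial_x^j u)\,dx$ as the sum of a positive principal part $c_\alpha\int(D_x^{(\alpha+1)/2}\partial_x^j u)^2\chi_j'\,dx$, an analogous companion $c_\alpha'\int(D_x^{(\alpha+1)/2}\mathcal{H}\partial_x^j u)^2\chi_j'\,dx$ reflecting the $\sgn(\xi)$ factor in the symbol $i|\xi|^{\alpha+1}\xi$, plus an error controlled by weighted $L^2$ norms of $u$ and by higher derivatives of $\chi_j$; these remainders are absorbable by the global $H^s$ bound and the inductive hypothesis. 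The velocity term $v\int(\partial_x^j u)^2\chi_j'\,dx$ is already positive and reinforces the localization on $\supp\chi_j'$.

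The nonlinearity produces, after integration by parts,
\begin{equation*}
2\int \partial_x^j(u\partial_x u)(\partial_x^j u)\chi_j\,dx = -\int u\,\chi_j'(\partial_x^j u)^2\,dx - (2j-1)\int (\partial_x u)\,\chi_j(\partial_x^j u)^2\,dx + R_j,
\end{equation*}
where $R_j$ gathers trilinear pieces of the form $\int (\partial_x^k u)(\partial_x^{j+1-k} u)(\partial_x^j u)\chi_j\,dx$ for $1<k<j$. These are estimated by Cauchy--Schwarz using the inductive bounds on $(\partial_x^k u)\chi_k^{1/2}$ for $k<j$ together with $\|u\|_{H^s}$, while the explicit term $\int (\partial_x u)\chi_j (\partial_x^j u)^2\,dx$ is absorbed using the integrable factor $\|\partial_x u(\cdot,t)\|_{L^\infty}\in L^1([0,T])$ from Theorem \ref{lt}. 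A Gronwall argument in $t$ then produces the supremum bound together with the two local-smoothing integrals asserted in \eqref{eqc1}.

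The second estimate \eqref{l1} is obtained by repeating the argument after applying $D_x^{(1-\alpha)/2}\partial_x^m$ to \eqref{DGBO}; the Ginibre--Velo decomposition now generates the local-smoothing integral
\begin{equation*}
\int\bigl(D_x^{(1-\alpha)/2+(\alpha+1)/2}\partial_x^m u\bigr)^2\chi'\,dx=\int (\partial_x^{m+1}u)^2\chi'\,dx,
\end{equation*}
together with its Hilbert-transform partner, matching \eqref{l1}. The main obstacle throughout is the commutator analysis for the genuinely non-local operator $D_x^{\alpha+1}\partial_x$ against the one-sided cutoff $\chi_j$: one must extract from $[D_x^{\alpha+1}\partial_x,\chi_j]$ a leading positive quadratic form while showing that the remainder gains $\alpha$ orders of regularity over the naive pointwise bound. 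Secondary technical hurdles are estimating the additional commutator $[D_x^{(1-\alpha)/2},\chi]$ appearing in the proof of \eqref{l1} by pseudo\-differential/Calderón-type arguments, and organizing the nested shrinkage of the supports $(\epsilon_j,\tau_j)$ so that, at each step of the induction on $j$, every prior localized estimate is valid precisely where it is needed.
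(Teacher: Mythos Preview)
This theorem is not proved in the present paper; it is quoted from \cite{AJ}. The paper does, however, explicitly describe the structure of that proof and reproduces the analogous argument in detail for Theorem~\ref{A}. Compared against that template, your outline has the right ingredients (moving weighted energy, the Ginibre--Velo decomposition \eqref{eq8}--\eqref{eq105} for the dispersive commutator, Strichartz to absorb $\partial_x u$, Gronwall) but contains one genuine gap.

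The sentence ``the velocity term $v\int(\partial_x^j u)^2\chi_j'\,dx$ is already positive and reinforces the localization'' is wrong in a way that matters. In the identity you wrote, this term appears with a \emph{minus} sign on the left, so after moving it to the right it is a \emph{positive} source term that must be bounded, not a helpful sink. Bounding it requires local-in-space $L^2_T L^2_x$ control of $\partial_x^j u$, and the smoothing produced at integer level $j-1$ only yields $j-1+\tfrac{\alpha+1}{2}<j$ derivatives, which is insufficient. This is exactly why, as the paper states, the induction for DGBO is ``carried out in two steps, one for a positive integer $m$ and another for $m+\tfrac{1-\alpha}{2}$'': the intermediate fractional step at level $j+\tfrac{1-\alpha}{2}$ is what produces the smoothing
\[
\int_0^T\!\!\int (\partial_x^{j+1}u)^2(\chi^2)'\,dx\,dt<\infty
\]
needed to close the velocity term at the \emph{next} integer level $j+1$. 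Your plan runs the integer induction as if it were self-contained and invokes the fractional step only once at the end for \eqref{l1}; with that organization the energy estimate at level $j\ge 2$ does not close. You should interleave the two steps at every level, and at the intermediate fractional levels the nonlinearity can no longer be handled by pure Leibniz/integration by parts---one needs the commutator machinery of Lemmas~\ref{kdvlem1}, \ref{dlkp}, \ref{lemma1} and Theorem~\ref{thm11}, together with the three-region splitting $1=\chi_{\epsilon,b}+\phi_{\epsilon,b}+\psi_\epsilon$, exactly as in the proof of Theorem~\ref{A}.
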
  
This property has been  found in solutions of several  dispersive models,  not only in the one dimensional setting, but in the  multidimensional one too. In this sense, Isaza et al. \cite{ILP4} proved  that real solutions of the Kadomtsev-Petviashvilli (KPII) equation satisfy the propagation of regularity phenomena. Later, Linares and Ponce \cite{LPZK} extended the study to solutions of the Zakharov-Kuznetsov equation in two  dimensions  resp. three dimensions.

Since this property is present in several  dispersive models, as the mentioned above,  it  seems  reasonable to ask if it also satisfied in models with less dispersion  than that in DGBO. 

So that,  a question that arises naturally from the theorem above, is to determine  if in the  case that less dispersion is considered, for example the IVP \eqref{e1}, do the solutions of the fKdV also satisfy the propagation of regularity phenomena.

Our main objective in this paper is to  give answer to this question.
More precisely, we obtain that the propagation of regularity phenomena is also satisfied by solutions of the fKdV. 
\begin{thmx}\label{A}
	Let $u_{0}\in H^{s_{\alpha}+}(\mathbb{R})$ where $s_{\alpha}:=2-\frac{\alpha}{2},$ and  $u=u(x,t)$ be  the corresponding solution of the IVP	\eqref{e1}
	provided by Theorem \ref{lt}. Suppose that  for some  $x_{0}\in \mathbb{R}$ and   some  $m\in \mathbb{Z}^{+},\, m\geq 2,$
	\begin{equation}\label{eq2.3}
	\partial_{x}^{m}u_{0}\in L^{2}\left(\{x\geq x_{0}\}\right).
	\end{equation} 
	Then for any $v\geq0,\, T>0,\, \epsilon>0$ and $\tau>4\epsilon$ are satisfied the following relations:
	\begin{equation*}
	{\footnotesize
		\begin{split}
		&\sup_{0\leq t\leq  T} \int_{x_{0}+\epsilon-vt}^{\infty} \left(\partial_{x}^{j}D_{x}^{\frac{\alpha n }{2}}u\right)^{2}(x,t)\,\mathrm{d}x+ \int_{0}^{T}\int_{x_{0}+\epsilon-vt}^{x_{0}+\tau-vt}\left(D_{x}^{j+\alpha \left(\frac{n+1}{2}\right)}u\right)^{2}(x,t)\,\mathrm{d}x\,\mathrm{d}t\\
		&+\int_{0}^{T}\int_{x_{0}+\epsilon-vt}^{x_{0}+\tau-vt}\left(\mathcal{H}D_{x}^{j+\alpha\left(\frac{ n+1}{2}\right)}u\right)^{2}(x,t)\,\mathrm{d}x\,\mathrm{d}t\leq c,
		\end{split}}
	\end{equation*}
	\begin{equation*}
	{\footnotesize	\begin{split}
		&\sup_{0\leq t\leq  T} \int_{x_{0}+\epsilon-vt}^{\infty} \left(\partial_{x}^{j}D_{x}^{1-\frac{\alpha  }{2}}u\right)^{2}(x,t)\,\mathrm{d}x+ \int_{0}^{T}\int_{x_{0}+\epsilon-vt}^{x_{0}+\tau-vt}\left(\partial_{x}^{j+1}u\right)^{2}(x,t)\,\mathrm{d}x\,\mathrm{d}t\\
		&+\int_{0}^{T}\int_{x_{0}+\epsilon-vt}^{x_{0}+\tau-vt}\left(\mathcal{H}\partial_{x}^{j+1}u\right)^{2}(x,t)\,\mathrm{d}x\,\mathrm{d}t\leq c,
		\end{split}}
	\end{equation*}
	and
	\begin{equation*}
	{\footnotesize
		\begin{split}
		&\sup_{0\leq t\leq  T} \int_{x_{0}+\epsilon-vt}^{\infty} \left(\partial_{x}^{m}u\right)^{2}(x,t)\,\mathrm{d}x+ \int_{0}^{T}\int_{x_{0}+\epsilon-vt}^{x_{0}+\tau-vt}\left(D_{x}^{m+\frac{\alpha}{2}}u\right)^{2}(x,t)\,\mathrm{d}x\,\mathrm{d}t\\
		&+\int_{0}^{T}\int_{x_{0}+\epsilon-vt}^{x_{0}+\tau-vt}\left(\mathcal{H}D_{x}^{m+\frac{\alpha}{2}}u\right)^{2}(x,t)\,\mathrm{d}x\,\mathrm{d}t\leq c,
		\end{split}}
	\end{equation*}
	where  $j=2,3,\cdots,m-1$ and $n=0,1,\cdots, \ceil{\frac{2}{\alpha}}-1.$
	\\
	If in addition  to \eqref{eq2.3}
	\begin{equation*}
	{\small \begin{split}
		&D_{x}^{1-\frac{\alpha}{2}}\partial_{x}^{m}u_{0}\in L^{2}\left(\left\{x\geq x_{0}\right\}\right)\, \mbox{and}\,\,	D_{x}^{\frac{\alpha j}{2}}\partial_{x}^{m}u_{0}\in L^{2}\left(
		\left\{x\geq x_{0}\right\}\right),
		\end{split}}
	\end{equation*}
for 	$j=1,2,\cdots, \ceil{\frac{2}{\alpha}}-1,$	then for any $v\geq0,\, T>0,\, \epsilon>0$ and $\tau>4\epsilon$
	\begin{equation*}
	{\footnotesize
		\begin{split}
		&\sup_{0\leq t \leq T}\int_{\mathbb{R}}\left(\partial_{x}^{m}D_{x}^{\frac{\alpha j}{2}}u\right)^{2}(x,t)\,\mathrm{d}x+\int_{0}^{T}\int_{\mathbb{R}}\left(D_{x}^{m+\alpha\left(\frac{ j+1}{2}\right)}u\right)^{2}(x,t)\,\mathrm{d}x\,\mathrm{d}t\\
		&+\int_{0}^{T}\int_{\mathbb{R}}\left(\mathcal{H}D_{x}^{m+\alpha\left(\frac{ j+1}{2}\right)}u\right)^{2}(x,t)\,\mathrm{d}x\,\mathrm{d}t\leq c,
		\end{split}}
	\end{equation*}
	and 
	\begin{equation*}
	\footnotesize{
		\begin{split}
		&\sup_{0\leq t\leq  T} \int_{x_{0}+\epsilon-vt}^{\infty} \left(\partial_{x}^{m}D_{x}^{1-\frac{\alpha  }{2}}u\right)^{2}(x,t)\,\mathrm{d}x+ \int_{0}^{T}\int_{x_{0}+\epsilon-vt}^{x_{0}+\tau-vt}\left(\partial_{x}^{m+1}u\right)^{2}(x,t)\,\mathrm{d}x\,\mathrm{d}t\\
		&+\int_{0}^{T}\int_{x_{0}+\epsilon-vt}^{x_{0}+\tau-vt}\left(\mathcal{H}\partial_{x}^{m+1}u\right)^{2}(x,t)\,\mathrm{d}x\,\mathrm{d}t\leq c.
		\end{split}}
	\end{equation*}
\end{thmx}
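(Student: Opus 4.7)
The plan is to adapt the weighted energy approach of \cite{AJ} for the DGBO equation to the weaker dispersion regime of \eqref{e1}. Because the operator $D_x^\alpha\partial_x$ with $0<\alpha<1$ produces a Kato-type local smoothing of order $(\alpha+1)/2$ per derivative, each basic energy step can gain only $\alpha/2$ of a derivative in the weight function, and therefore a bootstrap of $\lceil 2/\alpha\rceil$ intermediate stages is required to cross from integer level $j$ to $j+1$; this is precisely why the index $n$ in the theorem runs from $0$ up to $\lceil 2/\alpha\rceil-1$. The argument is thus a double induction: outer induction on $j=2,3,\dots,m-1$ and inner induction on the fractional weight $n=0,1,\dots,\lceil 2/\alpha\rceil-1$, with the local smoothing byproduct of stage $(j,n)$ serving as input weight for stage $(j,n+1)$, and the saturation at $n=\lceil 2/\alpha\rceil-1$ (the second and third displayed inequalities) feeding the passage from $j$ to $j+1$.

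Concretely, I fix non-decreasing cutoffs $\chi_{\epsilon,b}(x+vt)$ vanishing on $\{x\le x_0+\epsilon/2-vt\}$, identically $1$ on $\{x\ge x_0+\epsilon-vt\}$, and with $\chi'_{\epsilon,b}$ supported in $[x_0+\epsilon/2-vt,\,x_0+\tau-vt]$. Applying $D_x^{\alpha n/2}\partial_x^j$ to the equation in \eqref{e1}, multiplying by $D_x^{\alpha n/2}\partial_x^j u\cdot\chi_{\epsilon,b}^{2}$, and integrating in $x$ yields a weighted energy identity whose dispersive contribution is handled via a Ginibre--Velo style commutator decomposition
\begin{equation*}
[D_x^\alpha\partial_x,\chi^{2}]w \;=\; c_\alpha\, D_x^{(\alpha+1)/2}\bigl(\chi\chi'\,D_x^{(\alpha+1)/2}w\bigr)+c_\alpha\,\mathcal{H} D_x^{(\alpha+1)/2}\bigl(\chi\chi'\,D_x^{(\alpha+1)/2}w\bigr)+R_\alpha(\chi)w,
\end{equation*}
applied to $w=D_x^{\alpha n/2}\partial_x^j u$. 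After one integration by parts, the principal part produces exactly the two positive local smoothing quantities with $\chi'_{\epsilon,b}$ weight and derivatives of order $j+\alpha(n+1)/2$ appearing on the left-hand side of the theorem, while $R_\alpha(\chi)$ is of strictly lower order and is absorbed by the inductive hypothesis. The nonlinear term $D_x^{\alpha n/2}\partial_x^j(u\partial_x u)$ is expanded via a fractional Leibniz rule: the worst piece, in which all derivatives fall on one factor of $u$, produces the prefactor $\|\partial_x u\|_{L^\infty_x}$ which is integrable in time by Theorem \ref{lt}, while terms that split derivatives are controlled by a combination of earlier inductive stages and the $H^{s_\alpha+}$ norm of $u$. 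Terms supported outside the cutoff are handled by enlarging the parameter $\epsilon$ and paying a fixed global $L^2$ factor, and a Gronwall argument then closes stage $(j,n)$.

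The global-in-$\mathbb{R}$ estimates of the \emph{if in addition} part follow by the same scheme: once the hypotheses $D_x^{\alpha j/2}\partial_x^m u_0$ and $D_x^{1-\alpha/2}\partial_x^m u_0$ are assumed to lie in $L^2(\{x>x_0\})$, one extra iteration allows the cutoff to be taken identically equal to $1$, since the top-order right-endpoint contribution is now finite. I expect the main technical obstacle to be the commutator analysis of the nonlocal operator $D_x^\alpha$ against $\chi$: unlike a local derivative, $D_x^\alpha$ couples the half-line $\{x>x_0+\epsilon-vt\}$ to its complement through a non-integrable kernel, so a naive pointwise estimate loses too many derivatives to close the induction. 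The resolution combines a Plancherel-based symbol analysis for the principal part of $[D_x^\alpha,\chi]$ with Kenig--Ponce--Vega fractional Leibniz and chain rules for the remainder, arranged so that every error either carries a bounded coefficient in $\chi$ or is smoother by at least $\alpha/2$ than the quantity being estimated, which is exactly what the double induction requires.
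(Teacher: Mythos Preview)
Your proposal is correct and follows the paper's approach: a bi-induction on the integer level and on fractional sublevels of width $\alpha/2$, weighted energy identities with the Ginibre--Velo decomposition \eqref{eq8}--\eqref{eq105} extracting the Kato smoothing from the dispersive commutator, Kato--Ponce/Li fractional Leibniz plus the separated-support Lemma~\ref{lemma1} for the nonlinearity, and Gronwall to close each step. Two points to tighten: in your schematic commutator formula the exponent should be $\alpha/2$, not $(\alpha+1)/2$ (consistent with your own correct claim that each step gains $\alpha/2$); and the passage from level $j$ to $j{+}1$ is not the last $\alpha n/2$ step but a separate step at $j+1-\alpha/2$, reached by interpolating down from the overshoot level $j+\alpha k\ge j+1-\alpha/2$ (see the argument around \eqref{eq9}--\eqref{eq7}) before one final $\alpha/2$ smoothing gives exactly $\partial_x^{j+1}$---your sketch elides this interpolation, and also the Bona--Smith regularization the paper uses to justify the formal computations.
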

\begin{rem}
	We shall remark the difference between  the index  $s(\alpha)$ and $s_{\alpha}$  used in Theorem \ref{lt} and Theorem \ref{A}, respectively.
	\end{rem}
The method of proof used in  Theorem \ref{A} follows in spirit the same lines than that used in the proof of Theorem \ref{DGBOT}. The main differences rely on the inductive argument. 


In the case of the DGBO (see \eqref{DGBO}), the induction is  carried out in two steps one for a positive integer $m$ and another for $m+\frac{1-\alpha}{2},\, 0<\alpha<1.$ In contrast, in the fKdV is required a  bi-induction argument, the first for a positive integer $m,$  followed by a  a sequence of steps of the form $m+\alpha j/2,\, j=1,\cdots ,\ceil{\frac{2}{\alpha}}-1,$ where $\ceil{\cdot}$ denotes the greatest integer function, and  a  final one step for $m+1-\alpha/2.$ 

%
Concerning the nonlinear part of the equation in \eqref{e1} the commutator expansion  \cite{GV1}, \cite{GV2} used in studying the propagation of regularity of the DGBO  becomes fundamental in our study,  as well as the recent  advances  presented by  Li \cite{Dongli} on the study of  commutators of Kato-Ponce type.   

The document is organized as follows. The first section is focused in the description  the  notation used thorough all  the document. Section 2 is dedicated to present several known results concerning commutator estimates joint with a particular decomposition  of these.  In section $3$ we make a review of the weighted functions  later used in  the proof of Theorem \ref{A}. Finally, section $4$  deals with the proof of our main result. 
	\section{Notation}
The following notation  will be used extensively  throughout this article. The operators $D_{x}^{s}=(-\partial_{x}^{2})^{s/2}$ and $J^{s}=(1-\partial_{x}^{2})^{s/2}$ denotes the  Riesz and Bessel potentials of order $-s,$ respectively. 

For $1\leq p\leq \infty,$\, $L^{p}(\mathbb{R})$  is the usual  Lebesgue space  with the norm  $\|\cdot\|_{L^{p}}=\|\cdot\|_{p},$  and the   besides for $s\in \mathbb{R},$  we consider the Sobolev space $L^{p}_{s}(\mathbb{R})$  is defined  via its usual norm $\|f\|_{s,p}=\|J^{s}f\|_{p}.$ In the  particular case  $p=2,$ the set $L^{2}_{s}(\mathbb{R})$ has a  Hilbert structure space  and we will denote it by  $H^{s}(\mathbb{R}).$

Let $f=f(x,t)$  be a function  defined for $x\in \mathbb{R}$ and $t$ in the time interval $[0,T],$ with $T>0$  or in the hole line $\mathbb{R}$. Then if $A$ denotes any of the spaces defined above, we define  the spaces  $L^{p}_{T}A_{x}$ and $L_{t}^{p}A_{x}$ by the norms 
\begin{equation*}
\|f\|_{L^{p}_{T}A_{x}}=\left(\int_{0}^{T}\|f(\cdot,t)\|_{A}^{p}\,\mathrm{d}t\right)^{1/p}\quad \mbox{and}\quad \|f\|_{L^{p}_{t}A_{x}}=\left(\int_{\mathbb{R}} \|f(\cdot,t)\|_{A}^{p}\,\mathrm{d}t\right)^{1/p},
\end{equation*}
for $1\leq p\leq \infty$ with the  natural modification in the case $p=\infty.$
Moreover, we use similar definitions for the  mixed spaces $L_{x}^{q}L_{t}^{p}$ and $L_{x}^{q}L_{T}^{p}$  with $1\leq p,q\leq \infty.$

For two quantities  $A$ and $B$, we denote  $A\lesssim B$  if $A\leq cB$ for some constant $c>0.$ Similarly $A\gtrsim B$  if  $A\geq cB$ for some $c>0.$  We denote  $A \sim B$  if $A\lesssim B$ and $B\lesssim A.$  The  dependence of the constant $c$  on other parameters  or constants  are usually clear  from the context  and we will often suppress this dependence. 

For $A,B$ operators we will denote  the commutator between $A$ and $B$ as $[A;B].$ 

The set of even numbers and odd numbers will be denoted by $\mathbb{Q}_{1}$ and $\mathbb{Q}_{2}$ respectively. Additionally, for  $m\in \mathbb{Z}^{+}$ the sets $\mathbb{Q}_{1}(m)$ and $\mathbb{Q}_{2}(m)$ will denote the set of even and odd numbers between $1$ and $m,$ respectively.

\section{ Inequalities \& commutators}
In this section we  collect several inequalities that will be used extensively throughout our work.

First,  we have     an extension of the Calderon Commutator  theorem \cite{calderon} established by B. Baj\v{s}anski et al. \cite{BC}. 
\begin{thm}
	Let $\mathcal{H}$  be the Hilbert transform. Then for any  $p\in (1,\infty)$ and any  $l,m\in \mathbb{Z}^{+}\cup\{0\}$ there exists  $c=c(p;l;m)>0$ such that 
	\begin{equation}\label{eq31}
	\|\partial_{x}^{l}[\mathcal{H}; \psi]\,\partial_{x}^{m}f\|_{p}\leq c\|\partial_{x}^{m+l}\psi\|_{\infty}\|f\|_{p}.
	\end{equation}
\end{thm}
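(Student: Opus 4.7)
The plan is to reduce the general estimate, via an algebraic commutator recursion, to a one-sided family of estimates of the form $\|[\mathcal{H};\phi]\partial_x^k f\|_p \leq c_k \|\phi^{(k)}\|_\infty \|f\|_p$, and then to deduce these from the classical Calder\'on commutator theorem. As a preliminary I would record the kernel representation
\[
[\mathcal{H};\psi]g(x)=\frac{1}{\pi}\,\mathrm{p.v.}\!\int\frac{\psi(x)-\psi(y)}{x-y}\,g(y)\,\mathrm{d}y,
\]
and observe that $K(x,y):=(\psi(x)-\psi(y))/(x-y)=\int_0^1\psi'(tx+(1-t)y)\,\mathrm{d}t$ is in fact smooth, with its partial derivatives pointwise controlled by derivatives of $\psi$.

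Next I would derive the algebraic identity $\partial_x[\mathcal{H};\psi]=[\mathcal{H};\psi]\partial_x+[\mathcal{H};\psi']$, which is immediate from $\mathcal{H}\partial_x=\partial_x\mathcal{H}$ together with $[\partial_x;\psi]=\psi'$. Iterating $l$ times and using Pascal's rule yields
\[
\partial_x^l [\mathcal{H};\psi]\partial_x^m f=\sum_{j=0}^{l}\binom{l}{j}[\mathcal{H};\psi^{(j)}]\,\partial_x^{l+m-j} f.
\]
Since $(\psi^{(j)})^{(l+m-j)}=\psi^{(l+m)}$, this reduces \eqref{eq31} to proving, for each integer $k\geq 0$, the one-sided bound $\|[\mathcal{H};\phi]\partial_x^k f\|_p\leq c_k\|\phi^{(k)}\|_\infty\|f\|_p$.

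For this one-sided family the case $k=0$ is immediate from the $L^p$-boundedness of $\mathcal{H}$ together with H\"older, while $k=1$ is the classical Calder\'on commutator theorem, a consequence of the Coifman--McIntosh--Meyer bound on the Cauchy integral along Lipschitz graphs, which I would invoke as a black box. For $k\geq 2$ I would integrate by parts exactly $k-1$ times in the kernel representation; this converts $[\mathcal{H};\phi]\partial_x^k f$ into an operator of the form $\int M_\phi(x,y)\,\partial_y f(y)\,\mathrm{d}y$ whose kernel $M_\phi$ coincides, up to a constant, with the integral computing the Taylor remainder of $\phi$ to order $k-1$ divided by $(x-y)^k$. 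This object is naturally expressed as a higher Calder\'on commutator built from the Lipschitz primitive of $\phi^{(k)}$, and the higher-order Coifman--McIntosh--Meyer estimate then supplies $L^p$-boundedness with constant $\|\phi^{(k)}\|_\infty$.

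The main obstacle is exactly this $k\geq 2$ step. Integrating by parts $k$ times so as to move every derivative onto $K$ and leave $f$ untouched produces a bound with $\|\phi^{(k+1)}\|_\infty$, one derivative too many; the right prescription is to stop one integration short and interpret the leftover operator as an iterated Calder\'on commutator rather than as convolution with a plain bounded kernel. Once this bookkeeping is in place, the combinatorial identity above collects all contributions into the desired bound.
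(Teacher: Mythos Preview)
The paper does not actually prove this theorem; it simply records the statement and cites Baj\v{s}anski--Coifman \cite{BC} for the original proof and Dawson--McGahagan--Ponce \cite{DMP} for an alternative. So there is no proof in the paper to compare against.

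Your outline is correct and goes well beyond what the paper supplies. The Leibniz-type identity
\[
\partial_x^l[\mathcal{H};\psi]\partial_x^m=\sum_{j=0}^{l}\binom{l}{j}[\mathcal{H};\psi^{(j)}]\partial_x^{l+m-j}
\]
is valid (your derivation via $\partial_x[\mathcal{H};\psi]=[\mathcal{H};\psi]\partial_x+[\mathcal{H};\psi']$ and induction is clean), and it reduces the full estimate to the one-sided family $\|[\mathcal{H};\phi]\partial_x^k f\|_p\lesssim\|\phi^{(k)}\|_\infty\|f\|_p$. That one-sided family \emph{is} the core content of the Baj\v{s}anski--Coifman result, so at this point one may simply cite it; your proposal to instead connect it to the higher Calder\'on commutators is also legitimate and is essentially the route taken in \cite{DMP}. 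Your observation that stopping one integration by parts short is mandatory---because moving all $k$ derivatives onto the kernel produces $\|\phi^{(k+1)}\|_\infty$---is precisely the subtle point. The identification of the remaining operator $\int \frac{R_{k-1}(\phi;y)(x)}{(x-y)^k}\,\partial_y f(y)\,\mathrm{d}y$ with an object controlled by the Coifman--McIntosh--Meyer machinery is correct in spirit, though in a fully written-out proof one would either invoke the $T(1)$ theorem, cite the higher commutator bounds directly, or run an induction on $k$ using the first Calder\'on commutator as the base case (this last is what \cite{DMP} does).
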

The proof follows by results in \cite{BC}, for a different proof see [\cite{DMP}, Lemma 3.1].

In our analysis, when dealing with the nonlinear part of the equation \eqref{e1}; it  will be crucial the next inequalities concerning the Leibniz rule for fractional derivatives	  established in \cite{{GRF},{KATOP2},{KPV2}}. 

\begin{lem}\label{lema1}
	For $s>0,\, p\in [1,\infty)$
	\begin{equation}\label{eq5}
	\left\|D^{s}(fg)\right\|_{p} \lesssim \left\|f\right\|_{p_{1}}\left\|D^{s}g\right\|_{p_{2}}+\left\|g\right\|_{p_{3}}\left\|D^{s}f\right\|_{p_{4}}
	\end{equation}
	with
	\begin{equation*}
	\frac{1}{p}=\frac{1}{p_{1}}+\frac{1}{p_{2}}=\frac{1}{p_{3}}+\frac{1}{p_{4}},\quad p_{j}\in(1,\infty],\quad j=1,2,3,4.
	\end{equation*}
\end{lem}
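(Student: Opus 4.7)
The plan is to reduce the fractional Leibniz estimate to a sum of paraproduct pieces via Littlewood--Paley analysis and then handle each piece with bilinear multiplier and maximal function tools. Fix a smooth radial bump $\varphi$ supported in $|\xi|\le 2$ and equal to $1$ on $|\xi|\le 1$, set $\psi(\xi)=\varphi(\xi)-\varphi(2\xi)$, and define the Littlewood--Paley projections by $\widehat{P_j f}(\xi)=\psi(2^{-j}\xi)\widehat f(\xi)$ with $P_{<k}=\sum_{j<k}P_j$. Bony's decomposition then splits
\begin{equation*}
fg=\sum_j (P_{<j-2}f)(P_j g)+\sum_j (P_j f)(P_{<j-2}g)+\sum_{|j-k|\le 2}(P_j f)(P_k g)=:\Pi_1+\Pi_2+\Pi_3.
\end{equation*}

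For $\Pi_1$ each summand has Fourier support in an annulus $|\xi|\sim 2^j$, so $D^s$ acts, up to bounded factors, as scalar multiplication by $2^{js}$ on that summand. A square-function reduction therefore reduces the control of $\|D^s\Pi_1\|_p$ to estimating
\begin{equation*}
\Bigl\|\bigl(\sum_j|P_{<j-2}f|^2\,2^{2js}|P_j g|^2\bigr)^{1/2}\Bigr\|_p,
\end{equation*}
which, using the pointwise bound $|P_{<j-2}f|\lesssim Mf$ by the Hardy--Littlewood maximal function, the Fefferman--Stein vector-valued maximal inequality, the Littlewood--Paley characterization $\|(\sum_j 2^{2js}|P_j g|^2)^{1/2}\|_{p_2}\sim\|D^s g\|_{p_2}$, and H\"older's inequality, delivers the bound $\|f\|_{p_1}\|D^s g\|_{p_2}$. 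The piece $\Pi_2$ is handled in exactly the same way by symmetry, yielding $\|g\|_{p_3}\|D^s f\|_{p_4}$.

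The main obstacle is the diagonal piece $\Pi_3$: the Fourier supports of its summands are no longer confined to an annulus, so $D^s$ cannot be replaced by a scalar and additional cancellation must be extracted. Here one views the bilinear operator $(F,G)\mapsto D^s[(P_jF)(P_kG)]$ as a Coifman--Meyer bilinear Fourier multiplier of order $s$ localized at scale $2^{\max(j,k)}$, which gives $\|D^s[(P_jf)(P_kg)]\|_p\lesssim 2^{s\max(j,k)}\|P_jf\|_{p_4}\|P_kg\|_{p_3}$; the sum over $|j-k|\le 2$ then collapses, via Cauchy--Schwarz and the Littlewood--Paley characterization, to $\|D^s f\|_{p_4}\|g\|_{p_3}+\|f\|_{p_1}\|D^s g\|_{p_2}$. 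The most delicate subcases are those where some $p_j=\infty$, in which the square-function estimate for $g$ or $f$ must be replaced by a sharp-maximal/$\mathrm{BMO}$ bound, and the endpoint $p=1$, which requires an $H^1$ atomic decomposition; both lie within the range covered by the classical treatments of Kato--Ponce and the refinements of Grafakos cited in the statement.
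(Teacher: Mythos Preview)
The paper does not prove this lemma at all: it is stated as a known inequality and attributed to the references \cite{GRF}, \cite{KATOP2}, \cite{KPV2} (Grafakos, Kato--Ponce, Kenig--Ponce--Vega), with no argument given. Your sketch is precisely the standard Littlewood--Paley/paraproduct proof found in those sources, so in spirit you are reproducing what the cited literature does rather than diverging from the paper.

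Two small remarks on your diagonal piece. First, the bound $\|D^s[(P_jf)(P_kg)]\|_p\lesssim 2^{s\max(j,k)}\|P_jf\|_{p_4}\|P_kg\|_{p_3}$ followed by a naive Cauchy--Schwarz in $j$ places the $\ell^2$ sum \emph{outside} the $L^p$ norms, which does not directly recover $\|D^sf\|_{p_4}\|g\|_{p_3}$; the usual route keeps the square function inside the norm (using that the Fourier support of $(P_jf)(\tilde P_jg)$ lies in $\{|\xi|\lesssim 2^j\}$, so only scales $\ell\le j+C$ contribute, and the hypothesis $s>0$ yields geometric decay in $j-\ell$). Second, the endpoint $p=1$ and the cases $p_j=\infty$ are genuinely more delicate than a one-line appeal to $H^1$ or $\mathrm{BMO}$ suggests; these are exactly the refinements addressed in the references you invoke. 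Neither point is a real gap---the approach is correct---but if you intend this as a self-contained proof rather than a pointer to the literature, the diagonal summation deserves one more line of care.
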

Recently  D.Li \cite{Dongli}  proved   new fractional Leibniz rules  for the nonlocal operator $D^{s},\,s>0,$  and related ones, including  various end-point situations. This type of estimate has proved to be a  very useful  tool in the study of propagation of regularity.
\begin{thm}\label{thm11}
	\underline{Case 1}: $1<p<\infty.$
	
	Let $s>0$ and $1<p<\infty.$ Then for any  $s_{1},s_{2}\geq 0$ with $s=s_{1}+s_{2},$  and any $f,g\in \mathcal{S}(\mathbb{R^{n}}),$ the following  hold:
	\begin{enumerate}
		\item If $1<p_{1},p_{2}<\infty$ with $\frac{1}{p}=\frac{1}{p_{1}}+\frac{1}{p_{2}},$ then
		\begin{equation}\label{commutator1}
		\begin{split}
		&\left\|D^{s}(fg)-\sum_{\alpha\leq s_{1}}\frac{1}{\alpha!}\partial^{\alpha}_{x}f D^{s,\alpha}g-\sum_{\beta\leq s_{2}}\frac{1}{\beta!}\partial^{\beta}_{x}g D^{s,\beta}f\right\|_{p}\lesssim \|D^{s_{1}}f\|_{p_{1}}\|D^{s_{2}}g\|_{p_{2}}.
		\end{split}
		\end{equation}
		\item If $p_{1}=p,\, p_{2}=\infty,$\, then
		\begin{equation}
		\begin{split}
		&\left\|D^{s}(fg)-\sum_{\alpha< s_{1}}\frac{1}{\alpha!}\partial^{\alpha}_{x}f D^{s,\alpha}g-\sum_{\beta\leq s_{2}}\frac{1}{\beta!}\partial^{\beta}_{x}g D^{s,\beta}f\right\|_{p}\lesssim \left\|D^{s_{1}}f\right\|_{p}\left\|D^{s_{2}}g\right\|_{\mathrm{BMO}},
		\end{split}
		\end{equation}
		where $\|\cdot\|_{\mathrm{BMO}}$  denotes the norm in the BMO space 
		\item If $p_{1}=\infty,\, p_{2}=p,$\, then
		\begin{equation}\label{ce1}
		\begin{split}
		&\left\|D^{s}(fg)-\sum_{\alpha\leq s_{1}}\frac{1}{\alpha!}\partial^{\alpha}f D^{s,\alpha}g-\sum_{\beta< s_{2}}\frac{1}{\beta!}\partial^{\beta}g D^{s,\beta}f\right\|_{p}\lesssim \left\|D^{s_{1}}f\right\|_{\mathrm{BMO}}\left\|D^{s_{2}}g\right\|_{p}.
		\end{split}
		\end{equation}
		The operator  $D^{s,\alpha}$ is defined  via Fourier transform\footnote{The precise  form of the Fourier transform  does not matter.}
		\begin{equation*}
		\begin{split}
		&\widehat{D^{s,\alpha}g}(\xi)=\widehat{D^{s,\alpha}}(\xi)\widehat{g}(\xi),\\
		&\widehat{D^{s,\alpha}}(\xi)= i^{-\alpha}\partial_{\xi}^{\alpha}\left(|\xi|^{s}\right).
		\end{split}
		\end{equation*}
	\end{enumerate}
	\underline{Case 2}: $\frac{1}{2}<p\leq 1.$
	
	If $\frac{1}{2}<p\leq 1,\, s>\frac{1}{p}-1$ or $s\in 2\mathbb{N},$  then for any  $1<p_{1},p_{2}<\infty$  with  $$\frac{1}{p}=\frac{1}{p_{1}}+\frac{1}{p_{2}},$$ any  $s_{1},s_{2}\geq 0$ with  $s_{1}+s_{2}=s,$
	\begin{equation*}
	\begin{split}
	&\left\|D^{s}(fg)-\sum_{\alpha\leq s_{1}}\frac{1}{\alpha!}\partial^{\alpha}_{x}f D^{s,\alpha}g-\sum_{\beta\leq s_{2}}\frac{1}{\beta!}\partial^{\beta}_{x}g D^{s,\beta}f\right\|_{p}\lesssim \|D^{s_{1}}f\|_{p_{1}}\|D^{s_{2}}g\|_{p_{2}}.
	\end{split}
	\end{equation*}
\end{thm}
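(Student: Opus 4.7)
The plan is to prove Theorem~\ref{thm11} via a Littlewood-Paley/Bony paraproduct decomposition combined with a Taylor expansion of the Fourier multiplier symbol $|\xi+\eta|^{s}$, following the Coifman-Meyer paradigm as refined by D.~Li. I would first write $fg = T_f g + T_g f + R(f,g)$, where $T_f g$ is the paraproduct with $f$ at frequency much lower than $g$, $T_g f$ is symmetric, and $R(f,g)$ is the diagonal resonant part where the two factors have comparable frequencies. Applying $D^s$ reduces the theorem to an analysis of three bilinear Fourier multipliers.

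For the paraproduct $D^s(T_f g)$, the output is spectrally localized essentially where $\widehat g$ lives, so on the supporting region $|\eta|\ll|\xi|$ of the bilinear symbol $|\xi+\eta|^s$ I would employ the Taylor expansion
\begin{equation*}
|\xi+\eta|^s = \sum_{\alpha \leq s_1} \frac{\eta^\alpha}{\alpha!}\, \partial_\xi^\alpha(|\xi|^s) + R_{s_1}(\xi,\eta),
\end{equation*}
where the remainder satisfies the symbolic estimate $|R_{s_1}(\xi,\eta)|\lesssim|\eta|^{s_1+\varepsilon}|\xi|^{s_2-\varepsilon}$ on the relevant sector. Integrating the main terms against the Fourier kernels of $f$ and $g$ reproduces exactly the contribution $\sum_{\alpha\leq s_1}\tfrac{1}{\alpha!}\partial_x^\alpha f\cdot D^{s,\alpha}g$ appearing in the statement, because by construction $\widehat{D^{s,\alpha}}(\xi) = i^{-\alpha}\partial_\xi^\alpha(|\xi|^s)$. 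The symmetric paraproduct $D^s(T_g f)$ produces the second sum $\sum_\beta \tfrac{1}{\beta!}\partial_x^\beta g\cdot D^{s,\beta}f$.

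The remainder bilinear operator with symbol $R_{s_1}(\xi,\eta)$ (and its twin from $T_g f$) is of Coifman-Meyer type: its symbol satisfies Mikhlin-type differential inequalities of bidegree roughly $(s_1+\varepsilon,s_2-\varepsilon)$, so it maps $L^{p_1}\times L^{p_2}\to L^p$ with norm controlled by $\|D^{s_1}f\|_{p_1}\|D^{s_2}g\|_{p_2}$ whenever $1<p_1,p_2<\infty$ and $1/p = 1/p_1 + 1/p_2$; this is realized by inserting Littlewood-Paley projectors and summing through the Fefferman-Stein vector-valued maximal inequality. The diagonal resonant piece $R(f,g)$ is handled by direct summation, exploiting that $\Delta_j f$ and $\Delta_k g$ have comparable frequencies and using Bernstein's inequality to transfer regularity onto the dyadic piece where it is cheapest to control.

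For the BMO endpoints (subcases (ii) and (iii) of Case~1), the unavailable $L^\infty$ bound on one factor is replaced by the paraproduct estimate $\|T_f g\|_p\lesssim\|f\|_{\mathrm{BMO}}\|g\|_p$ together with $H^1$--$\mathrm{BMO}$ duality; the strict truncation $\alpha<s_1$ in subcase (ii) is essential because at integer $s_1$ the borderline Taylor term cannot be separated from the remainder without $L^\infty$ control on $D^{s_1}f$, and similarly for subcase (iii). For Case~2 with $\tfrac12<p\leq 1$, one replaces $L^p$ by the Hardy space $H^p\hookrightarrow L^p$, and the hypothesis $s>\tfrac1p-1$ (or $s\in 2\mathbb{N}$) is precisely what ensures the remainder multiplier acts boundedly on $H^p$ via its molecular characterization, since the missing cancellation at integer orders is supplied by the even-integer exception. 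The hardest step will be verifying that the non-classical behavior of the symbols $\partial_\xi^\alpha(|\xi|^s)$ at $\xi=0$, especially when $\alpha$ is an integer and $s$ is not, still produces operators of the claimed boundedness type; controlling these homogeneous distributional pieces is where Li's refinement of classical Kato-Ponce technology is most delicate and where the bookkeeping of which sums truncate at $\leq s_j$ versus $<s_j$ becomes subtle.
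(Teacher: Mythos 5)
The paper does not actually prove this theorem: it is quoted verbatim from D.~Li, and the ``proof'' in the text is a one-line deferral to \cite{Dongli}. Your sketch follows essentially the same route as Li's argument in that reference --- Bony paraproduct splitting, Taylor expansion of $\left|\xi+\eta\right|^{s}$ in the low-frequency variable (which is precisely what generates the correction operators $D^{s,\alpha}$), renormalized Coifman--Meyer bounds for the Taylor remainder after factoring out the homogeneous weights $|\eta|^{s_{1}}|\xi|^{s_{2}}$, BMO paraproduct estimates at the endpoints, and Hardy-space technology for $\frac{1}{2}<p\leq 1$ --- so relative to the source the paper relies on, your plan is the standard and correct one, and you correctly identify the genuinely delicate points (the non-smoothness of $\partial_{\xi}^{\alpha}\left(|\xi|^{s}\right)$ at the origin, the $\leq s_{j}$ versus $<s_{j}$ truncations at the BMO endpoints, and the role of $s\in 2\mathbb{N}$ as the case where $D^{s}$ is a local differential operator). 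The only caution worth recording is that your remainder bound $|R_{s_{1}}(\xi,\eta)|\lesssim|\eta|^{s_{1}+\varepsilon}|\xi|^{s_{2}-\varepsilon}$ does not by itself make the remainder a Coifman--Meyer multiplier; one must verify the full family of Mikhlin-type derivative estimates for the renormalized symbol on the paraproduct sector, and it is exactly this verification (not the algebra of the expansion) that occupies most of Li's paper.
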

\begin{rem}
	As usual  empty summation  (such as $\sum_{0\leq\alpha<0}$) is defined as zero.
\end{rem}
\begin{proof}
	For a detailed proof of this theorem and related results, see \cite{Dongli}.
\end{proof}
 Also, several commutator estimates have been obtained  by D. Li\,\cite{Dongli}. These, corresponds to a    family of refined Kato-Ponce type inequalities for the operator $D^{s}.$  
\begin{lem}\label{dlkp}
	Let $1<p<\infty.$  Let $1<p_{1},p_{2},p_{3},p_{4}\leq \infty$ satisfy 
	\begin{equation}
	\frac{1}{p}=\frac{1}{p_{1}}+\frac{1}{p_{2}}=\frac{1}{p_{3}}+\frac{1}{p_{4}}.
	\end{equation}
	Therefore,
	\begin{itemize}
		\item[(a)] If $0< s\leq 1,$  then 
		\begin{equation*}
		\|D^{s}(fg)-fD^{s}g \|_{p}\lesssim \|D^{s-1}\partial_{x}f \|_{p_{1}}\|g\|_{p_{2}}.
		\end{equation*}
		\item[(b)] If $s>1,$  then 
		\begin{equation}\label{kpdl}
		\|D^{s}(fg)-fD^{s}g\|_{p}\lesssim \|D^{s-1}\partial_{x}f\|_{p_{1}}\|g\|_{p_{2}}+\|\partial_{x}f\|_{p_{3}}\|D^{s-1}g\|_{p_{4}}.
		\end{equation}
	\end{itemize}
\end{lem}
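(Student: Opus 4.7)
The strategy is to deduce both inequalities from the commutator expansion in Theorem \ref{thm11}, combined with the one-dimensional identity $D=\mathcal{H}\partial_{x}$ (whence $D^{s}=\mathcal{H}D^{s-1}\partial_{x}$) and the $L^{q}$-boundedness of the Hilbert transform for $1<q<\infty$. This last ingredient is what converts a bound of the form $\|D^{s}f\|_{p_{1}}$ into the desired $\|D^{s-1}\partial_{x}f\|_{p_{1}}$.

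For part (a) with $0<s<1$, I would apply Theorem \ref{thm11}(1) with the splitting $s_{1}=s$, $s_{2}=0$. Since $\alpha,\beta$ are nonnegative integers with $\alpha\leq s<1$ and $\beta\leq 0$, both sums collapse to their single $\alpha=0$, $\beta=0$ terms, yielding
\[
\|D^{s}(fg)-fD^{s}g-gD^{s}f\|_{p}\lesssim\|D^{s}f\|_{p_{1}}\|g\|_{p_{2}}.
\]
The triangle inequality and H\"older absorb $gD^{s}f$, and the identity $D^{s}=\mathcal{H}D^{s-1}\partial_{x}$ concludes the argument. The endpoint $s=1$ is instead handled by hand via the decomposition $[D,f]g=\mathcal{H}(\partial_{x}f\cdot g)+[\mathcal{H},f]\partial_{x}g$: the first summand is controlled by H\"older plus $L^{p}$-boundedness of $\mathcal{H}$, the second by the Calder\'on commutator estimate \eqref{eq31} or its $L^{p_{1}}$-refinement when $p_{1}<\infty$.

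For part (b) with $s>1$, I would apply Theorem \ref{thm11}(1) with the splitting $s_{1}=1$, $s_{2}=s-1$. The $\alpha$-sum produces $fD^{s}g+\partial_{x}f\cdot D^{s,1}g$; since the Fourier symbol of $D^{s,1}$ equals $-is\sgn(\xi)|\xi|^{s-1}$ (a constant multiple of $\mathcal{H}D^{s-1}$), the $\alpha=1$ contribution is bounded by $\|\partial_{x}f\|_{p_{3}}\|D^{s-1}g\|_{p_{4}}$ via H\"older. The $\beta$-sum contains $gD^{s}f$, which is absorbed into $\|D^{s-1}\partial_{x}f\|_{p_{1}}\|g\|_{p_{2}}$ exactly as in (a), plus higher-order corrections $\partial_{x}^{\beta}g\cdot D^{s,\beta}f$ for $1\leq\beta\leq s-1$; when $\beta=s-1$ is an integer these are controlled directly by H\"older (since $\partial_{x}^{\beta}\sim D^{s-1}$ on $g$ and $D^{s,\beta}f\sim \partial_{x}f$), while for intermediate $\beta$ an integration-by-parts identity of the form $\partial_{x}^{\beta}g\cdot D^{s,\beta}f=\partial_{x}(\partial_{x}^{\beta-1}g\cdot D^{s,\beta}f)-\partial_{x}^{\beta-1}g\cdot\partial_{x}D^{s,\beta}f$ combined with Lemma \ref{lema1} redistributes the derivatives so as to absorb them into one of the two target norms.

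The principal obstacle is precisely this redistribution of derivatives in the higher-order correction terms for (b), together with the handling of the endpoint exponents $p_{j}\in\{\infty\}$ in both parts, where the naive Leibniz bound of Lemma \ref{lema1} would lose too many derivatives and force one into BMO-type spaces. Here one must appeal to the refined Littlewood--Paley/paraproduct machinery developed by Li in \cite{Dongli}, which exploits subtle cancellations in $[D^{s},f]g$ that are invisible at the level of the standard fractional Leibniz rule.
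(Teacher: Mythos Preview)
The paper does not prove this lemma at all; it is simply quoted from Li \cite{Dongli}, so there is no ``paper's own proof'' to match beyond that citation. Your plan to deduce it from Theorem~\ref{thm11} is more ambitious, and for part~(a) with $0<s<1$ and all $p_j<\infty$ your argument is essentially correct: the splitting $s_1=s$, $s_2=0$ collapses both sums, and the passage from $\|D^{s}f\|_{p_1}$ to $\|D^{s-1}\partial_x f\|_{p_1}$ via $D=\mathcal{H}\partial_x$ is legitimate.

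For part~(b), however, there is a genuine gap. With the splitting $s_1=1$, $s_2=s-1$ the $\beta$-sum runs over all integers $0\le\beta\le s-1$, and each intermediate term $\partial_x^{\beta}g\cdot D^{s,\beta}f$ carries $\beta$ derivatives on $g$ and order $s-\beta$ on $f$. Your proposed ``integration-by-parts identity''
\[
\partial_x^{\beta}g\cdot D^{s,\beta}f=\partial_x\bigl(\partial_x^{\beta-1}g\cdot D^{s,\beta}f\bigr)-\partial_x^{\beta-1}g\cdot\partial_xD^{s,\beta}f
\]
is a pointwise equality, but it does not redistribute derivatives inside an $L^{p}$ norm: the first summand on the right has one \emph{more} derivative than the original expression, so its $L^{p}$ norm is not controlled by either target term, and Lemma~\ref{lema1} (which concerns $D^{s}(fg)$, not a bare product) does not rescue this. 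Already for $s\ge 2$ the term with $\beta=1$, namely $\partial_x g\cdot D^{s,1}f\sim\partial_x g\cdot\mathcal{H}D^{s-1}f$, is not dominated by $\|D^{s-1}\partial_x f\|_{p_1}\|g\|_{p_2}+\|\partial_x f\|_{p_3}\|D^{s-1}g\|_{p_4}$ through H\"older alone for arbitrary admissible pairs $(p_1,p_2)$ and $(p_3,p_4)$; one would need an interpolation/Gagliardo--Nirenberg step that you have not supplied, and which in general does not close with the stated exponents. Li's actual proof in \cite{Dongli} bypasses Theorem~\ref{thm11} entirely and works directly with the paraproduct decomposition of $[D^{s},f]g$, exploiting frequency-localized cancellations that are lost once one expands into the pointwise correction terms $\partial_x^{\beta}g\cdot D^{s,\beta}f$. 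Your concluding paragraph effectively concedes this, but the body of the argument for~(b) as written does not go through.
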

In addition, a non sharp commutator estimate is required in our analysis.
\begin{lem} \label{kdvlem1}
	Let $\phi\in C^{\infty}(\mathbb{R})$ with  $\phi'\in C^{\infty}_{0}(\mathbb{R}).$  If  $f\in H^{s}(\mathbb{R}),\,s>0,$ then  for any $l>s+\frac{1}{2}$
	\begin{equation}
	\left\|\left[D_{x}^{s};\phi\right]f\right\|_{2}\lesssim\left\|\phi'\right\|_{l,2}\|f\|_{s-1,2}. 
	\end{equation}
\end{lem}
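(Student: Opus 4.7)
Plan. I would prove the lemma by reducing it, via Fourier analysis, to scalar integral estimates. Writing
\[
\widehat{[D^s_x;\phi]f}(\xi) = c\int_{\mathbb R}(|\xi|^s - |\eta|^s)\,\widehat{\phi}(\xi-\eta)\widehat{f}(\eta)\,d\eta
\]
and using the identity $(\xi-\eta)\widehat{\phi}(\xi-\eta) = -i\,\widehat{\phi'}(\xi-\eta)$, this becomes
\[
\widehat{[D^s_x;\phi]f}(\xi) = -ic\int_{\mathbb R} m(\xi,\eta)\,\widehat{\phi'}(\xi-\eta)\widehat{f}(\eta)\,d\eta,\qquad m(\xi,\eta):=\frac{|\xi|^s - |\eta|^s}{\xi-\eta}.
\]
This preliminary step is essential because $\phi\in L^\infty$ need not decay (so $\widehat\phi$ is only a tempered distribution), whereas $\phi'\in C_0^\infty\subset\mathcal S$ has a Schwartz-class Fourier transform controlled by $\|\phi'\|_{H^l}$.

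The main work is the pointwise control of the mean-value kernel $m$. For $s\geq 1$, the mean value theorem together with Peetre's inequality gives $|m(\xi,\eta)|\lesssim\langle\xi-\eta\rangle^{s-1}\langle\eta\rangle^{s-1}$. For $0<s<1$, since $t\mapsto|t|^s$ is only H\"older continuous at the origin, only the singular bound $|m(\xi,\eta)|\lesssim|\xi-\eta|^{s-1}$ is available. I would split the $\eta$-integration into the three regions
\begin{align*}
\mathrm{I} &= \{\langle\eta\rangle\geq 2\langle\xi-\eta\rangle\},\\
\mathrm{II} &= \{\langle\eta\rangle<2\langle\xi-\eta\rangle,\ |\xi-\eta|\geq 1\},\\
\mathrm{III} &= \{\langle\eta\rangle<2\langle\xi-\eta\rangle,\ |\xi-\eta|<1\}.
\end{align*}
A short case analysis shows that on $\mathrm{I}\cup\mathrm{II}$ one has $|m|\lesssim \langle\xi-\eta\rangle^{(s-1)_+}\langle\eta\rangle^{s-1}$ uniformly: the H\"older singularity at the diagonal is harmless here because $|\xi-\eta|\geq 1$ on $\mathrm{II}$ and $|\eta|$ dominates on $\mathrm{I}$. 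On $\mathrm{III}$ only the genuinely singular bound $|m|\lesssim|\xi-\eta|^{s-1}$ applies, but the constraints force $\langle\xi-\eta\rangle\leq\sqrt 2$ and hence $\langle\eta\rangle\lesssim 1$, localizing the contribution to low frequencies.

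With these bounds, the contribution of $\mathrm{I}\cup\mathrm{II}$ is pointwise dominated by $(\langle\cdot\rangle^{(s-1)_+}|\widehat{\phi'}|)\ast(\langle\cdot\rangle^{s-1}|\widehat f|)$, whose $L^2$-norm is controlled via Young's convolution inequality by $\|\langle\cdot\rangle^{(s-1)_+}\widehat{\phi'}\|_{L^1}\|f\|_{H^{s-1}}$; Cauchy--Schwarz then yields $\|\langle\cdot\rangle^{(s-1)_+}\widehat{\phi'}\|_{L^1}\lesssim\|\phi'\|_{H^l}$ for any $l>\max(\tfrac12,s-\tfrac12)$. The contribution of $\mathrm{III}$, relevant only when $0<s<1$, is handled by Young applied to the convolution of $|\widehat{\phi'}||\cdot|^{s-1}\mathbf 1_{|\cdot|<1}$ with $|\widehat f|\mathbf 1_{|\eta|\lesssim 1}$: the kernel $|\cdot|^{s-1}\mathbf 1_{|\cdot|<1}$ is integrable because $s>0$, one has $\|\widehat{\phi'}\|_{L^\infty}\leq\|\phi'\|_{L^1}\lesssim\|\phi'\|_{H^l}$ for $l>\tfrac12$ (with constant depending on $\mathrm{supp}(\phi')$), and the localization $\langle\eta\rangle\sim 1$ gives $\|\widehat f\mathbf 1_{|\eta|\lesssim 1}\|_{L^2}\lesssim\|f\|_{H^{s-1}}$. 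The hypothesis $l>s+\tfrac12$ in the lemma comfortably exceeds the sharp condition $l>\max(\tfrac12,s-\tfrac12)$ appearing above, reflecting the \emph{non-sharp} character of the estimate. The main obstacle is the treatment of Region $\mathrm{III}$ when $0<s<1$, where the kernel $m$ is genuinely singular at the diagonal and the compact support of $\phi'$ must be used crucially to absorb the singularity.
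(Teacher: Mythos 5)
Your proof is correct, but it follows a genuinely different route from the paper's. The paper's (outlined) proof first invokes the Bourgain--Li decomposition $D_{x}^{s}=J^{s}-\sum_{1\leq j\leq s/2}c_{s,j}J^{s-2j}+K_{s}$, with $K_{s}$ bounded on $L^{p}$, and then runs a Folland-type commutator argument on the Bessel pieces: since the symbol $\langle\xi\rangle^{\sigma}$ is smooth, mean-value bounds hold for every $\sigma>0$ with no diagonal singularity, and the whole difficulty created by the cusp of $|\xi|^{s}$ at the origin is shifted into the decomposition theorem. You instead attack the homogeneous symbol directly: the reduction $(\xi-\eta)\widehat{\phi}(\xi-\eta)=-i\,\widehat{\phi'}(\xi-\eta)$ (needed in some form in either approach, since $\widehat{\phi}$ is only a tempered distribution) followed by your three-region analysis of $m(\xi,\eta)$, with Region $\mathrm{III}$ absorbing the $|\xi-\eta|^{s-1}$ cusp for $0<s<1$ through the forced low-frequency localization $\langle\eta\rangle\lesssim 1$. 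Your route buys three things: it is self-contained (only Young and Cauchy--Schwarz, no external decomposition); it quantifies the non-sharpness of the hypothesis, showing $l>\max(\tfrac12,s-\tfrac12)$ suffices rather than $l>s+\tfrac12$; and it makes the one genuinely singular step explicit. Notably, that step does not really disappear in the paper's route: the remainder multiplier of $K_{s}$ still carries the H\"older-$s$ cusp of $|\xi|^{s}$ near the origin, and plain $L^{2}$-boundedness of $\left[K_{s};\phi\right]$ only yields $\|f\|_{2}$ on the right, which is \emph{not} controlled by $\|f\|_{s-1,2}$ when $s<1$; recovering the $H^{s-1}$ norm for that piece requires a low-frequency commutator gain essentially identical to your Region $\mathrm{III}$, a detail the paper's outline leaves implicit. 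Two small caveats for a final write-up: your Region $\mathrm{III}$ constant depends on $\supp(\phi')$ through $\|\phi'\|_{L^{1}}\lesssim|\supp(\phi')|^{1/2}\|\phi'\|_{L^{2}}$, which is harmless here since the lemma is applied only to the fixed weight families $\chi_{\epsilon,b}$, $\phi_{\epsilon,b}$, $\eta_{\epsilon,b}$; and the formal kernel manipulation should be legitimized by proving the estimate for the associated bilinear form on Schwartz data and concluding by density, exactly as your remark on the distributional nature of $\widehat{\phi}$ anticipates.
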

\begin{proof}
We will give an outline of the proof.

First, we  use the decomposition of the operator $D_{x}^{s}$ for $s>0,$ given by Bourgain and Li \cite{BOURGAINLI} in the study of commutator estimates.

In fact,
\begin{equation*}
D_{x}^{s}=J^{s}_{x}-\sum_{1\leq j\leq \frac{s}{2}} c_{s,j}J^{s-2j}_{x}+ K_{s}
\end{equation*}	
where $K_{s}$ is a bounded  integral  operator  satisfying $K_{s}:L^{p}(\mathbb{R})\longrightarrow L^{p}(\mathbb{R}), \, 1\leq p\leq \infty.$

By using this decomposition and a proof similar to that found in  \cite[Chapter 6,  Lemma 6.16 ]{Folland1},  the lemma follows.	
\end{proof}
Also,   the following inequality of  Gagliardo-Nirenberg type is used.
\begin{lem}\label{lema2}
	Let  $1<q,p<\infty,\, 1<r\leq \infty$ and $0<\alpha<\beta.$  Then, 
	\begin{equation*}
	\|D^{\alpha}f\|_{L^{p}}\lesssim c \|f\|_{L^{r}}^{1-\theta}\|D^{\beta}f\|_{L^{q}}^{\theta}
	\end{equation*}
	with
	\begin{equation*}
	\frac{1}{p}-\alpha=(1-\theta)\frac{1}{r}+\theta\left(\frac{1}{q}-\beta\right),\quad \theta\in \left[\alpha/\beta,1\right].
	\end{equation*}
\end{lem}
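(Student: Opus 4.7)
The plan is to prove this fractional Gagliardo--Nirenberg interpolation inequality by a Littlewood--Paley decomposition combined with Bernstein's inequalities, with the endpoint cases handled by the Hardy--Littlewood--Sobolev theorem. The scaling identity relating $1/p,\, 1/q,\, 1/r,\, \alpha,\, \beta,\, \theta$ is exactly the homogeneity constraint that forces the interpolated bound to have the correct dimensional behavior, and it will arise automatically from optimizing a dyadic parameter.

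First I would write $f = S_{0}f + \sum_{j\geq 1}\Delta_{j}f$, where $\Delta_{j}$ denotes the Littlewood--Paley projector onto frequencies $|\xi|\sim 2^{j}$ and $S_{0}$ localizes to $|\xi|\lesssim 1$. On each dyadic block the Fourier support gives $\|D^{\alpha}\Delta_{j}f\|_{p}\sim 2^{j\alpha}\|\Delta_{j}f\|_{p}$ (up to a Schwartz multiplier), and Bernstein's inequality furnishes the two competing bounds
\begin{equation*}
\|D^{\alpha}\Delta_{j}f\|_{p}\lesssim 2^{j(\alpha+1/r-1/p)}\|f\|_{r}, \qquad \|D^{\alpha}\Delta_{j}f\|_{p}\lesssim 2^{-j(\beta-\alpha+1/p-1/q)}\|D^{\beta}f\|_{q}.
\end{equation*}
Setting $A:=\alpha+1/r-1/p$ and $B:=\beta-\alpha+1/p-1/q$, a short algebraic check using the assumed identity for $\theta$ gives $A+B=\beta+1/r-1/q$ and $A/(A+B)=\theta$; and when $\theta$ lies strictly between $\alpha/\beta$ and $1$ one has $A,B>0$, which is precisely what is needed for the two geometric sums below to converge on their respective half-lines.

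Next I would split the dyadic sum at a threshold $N\in\mathbb{Z}$, applying the first estimate for $j\leq N$ and the second for $j>N$, to obtain
\begin{equation*}
\|D^{\alpha}f\|_{p}\lesssim 2^{NA}\|f\|_{r}+2^{-NB}\|D^{\beta}f\|_{q}.
\end{equation*}
Choosing the integer $N$ that minimizes the right-hand side, i.e.\ so that $2^{N(A+B)}\sim \|D^{\beta}f\|_{q}/\|f\|_{r}$, and inserting $\theta=A/(A+B)$, $1-\theta=B/(A+B)$, yields $\|D^{\alpha}f\|_{p}\lesssim \|f\|_{r}^{1-\theta}\|D^{\beta}f\|_{q}^{\theta}$, as desired. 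The low-frequency piece $S_{0}f$ is handled like the $j=0$ block, since $\Delta_{j}$ and $S_{0}$ are convolutions with Schwartz kernels and hence bounded on every $L^{p}$.

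The main obstacle is the handling of the endpoints of $\theta$. At $\theta=1$ the scaling forces $B=0$, so the Littlewood--Paley optimization degenerates; here I would instead write $D^{\alpha}f=D^{\alpha-\beta}(D^{\beta}f)$ and invoke the Hardy--Littlewood--Sobolev theorem for the Riesz potential of order $\beta-\alpha>0$, which is bounded from $L^{q}$ to $L^{p}$ under the scaling hypothesis $1/p=1/q-(\beta-\alpha)$. At $\theta=\alpha/\beta$ one can either pass to the limit from the interior by continuity of both sides in $\theta$, or argue directly using a frequency-restricted version of the classical Gagliardo--Nirenberg inequality combined with the Mikhlin multiplier theorem to pass from $\partial_{x}^{\lceil\beta\rceil}$ to $D^{\beta}$. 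A secondary, essentially bookkeeping, difficulty is the endpoint $r=\infty$, where one sets $1/r=0$ throughout; Bernstein's inequality still applies to the projector $\Delta_{j}f\in L^{\infty}$ since the associated kernel is in $L^{1}$, so no substantial modification is needed.
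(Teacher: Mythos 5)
Your plan (Littlewood--Paley blocks, two competing Bernstein bounds, optimization over the splitting frequency, Hardy--Littlewood--Sobolev at $\theta=1$) is a standard and, in spirit, reasonable route, and it is in any case different from the paper, which gives no argument at all but cites Bergh--L\"ofstr\"om (Chapter 4): there the inequality is obtained from complex interpolation of the couple $\left(L^{r},\dot{H}^{\beta}_{q}\right)$, namely $\|f\|_{\dot{H}^{\theta\beta}_{\tilde{p}}}\leq\|f\|_{r}^{1-\theta}\left\|D^{\beta}f\right\|_{q}^{\theta}$ with $\frac{1}{\tilde{p}}=\frac{1-\theta}{r}+\frac{\theta}{q}$, followed by the Sobolev embedding $\dot{H}^{\theta\beta}_{\tilde{p}}\hookrightarrow\dot{H}^{\alpha}_{p}$, and that route is insensitive to the relative size of the exponents. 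Your route is not: Bernstein's inequality on $\mathbb{R}$ only moves \emph{upward} in integrability, so your first block estimate requires $r\leq p$ and your second requires $q\leq p$, and neither follows from the hypotheses. The lemma explicitly allows $r=\infty$, where the bound $\|D^{\alpha}\Delta_{j}f\|_{p}\lesssim 2^{j(\alpha-1/p)}\|f\|_{\infty}$ is simply false: for $f(x)=e^{i2^{j}x}\psi(x/L)$ one has $\|f\|_{\infty}\sim 1$ while $\|\Delta_{j}f\|_{p}\sim L^{1/p}\to\infty$; frequency localization gives no passage from $L^{\infty}$ (or any $L^{r}$ with $r>p$) down to $L^{p}$ globally. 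The case $q>p$ is also admissible (e.g.\ $\alpha=\tfrac12$, $\beta=1$, $\theta=\tfrac12$, $r$ close to $1$, $q=10$ forces $p<2$), so your second displayed estimate fails there too.

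There is a second, related error in the algebra: from the scaling identity one gets $A=\theta\left(\beta+\frac{1}{r}-\frac{1}{q}\right)$ and $B=(1-\theta)\left(\beta+\frac{1}{r}-\frac{1}{q}\right)$, so positivity of $A$ and $B$ has nothing to do with $\theta$ being strictly interior; it is equivalent to $\beta+\frac{1}{r}-\frac{1}{q}>0$, which the hypotheses do not guarantee (take $\beta<1$, $r=\infty$, $\frac{1}{q}>\beta$, all permitted). When that quantity is negative both half-line geometric sums diverge and the optimization collapses; conversely, when it is positive your argument covers $\theta=\alpha/\beta$ directly, so the special treatment you propose at that endpoint is unnecessary while the real degenerate regime is missed. (A minor further point: the threshold optimization needs the homogeneous decomposition $\sum_{j\in\mathbb{Z}}\dot{\Delta}_{j}$, since your inhomogeneous $S_{0}$ forbids choosing $N<0$ when $\|D^{\beta}f\|_{q}\ll\|f\|_{r}$.) Your proof is correct as written only under the extra assumptions $\max(q,r)\leq p$ and $\beta+\frac{1}{r}-\frac{1}{q}>0$ --- which do cover the instances actually used in this paper, where $p=q=r=2$ --- but to prove the lemma as stated you should either run the interpolation-plus-embedding argument above or use a pointwise Hedberg-type bound through the maximal function.
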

\begin{proof}
	See \cite[chapter 4]{BL}.	
\end{proof}
Next we consider a result  that will be used widely when dealing with the nonlinear part of the fKdV. 
\begin{lem}\label{lemma1}
	Let $m\in \mathbb{Z}^{+}$ and $s\geq0.$   If  $f\in L^{2}(\mathbb{R})$ and $g\in L^{p}(\mathbb{R}),\,\,2\leq p\leq \infty,$ with
	\begin{equation}\label{cond1}
	\mathrm{dist}\left(\supp(f),\supp(g)\right)\geq \delta>0.
	\end{equation}
	Then
	\begin{equation*}
	\left\|g\,\partial^{m}_{x}D^{s}f\right\|_{L^{p}}\lesssim\|g\|_{L^{p}}\|f\|_{L^{2}}.
	\end{equation*}
\end{lem}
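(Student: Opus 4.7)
The plan is to exploit the pseudolocal nature of the Fourier multiplier $\partial_{x}^{m}D^{s}$, whose distributional kernel $K(x):=\mathcal{F}^{-1}\bigl((i\xi)^{m}|\xi|^{s}\bigr)(x)$ is a homogeneous tempered distribution of degree $-(1+m+s)$ and is $C^{\infty}$ on $\mathbb{R}\setminus\{0\}$. Since $m\geq 1$ and $s\geq 0$, one has $1+m+s\geq 2$, so $K$ decays rapidly away from any neighbourhood of the origin, and the hypothesis $\dist(\supp(f),\supp(g))\geq \delta$ will let us discard the singular portion of $K$ altogether.

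To make this precise, I would fix a cutoff $\chi\in C^{\infty}_{c}(\mathbb{R})$ with $\chi\equiv 1$ on $\{|x|\leq \delta/4\}$ and $\supp(\chi)\subset\{|x|\leq \delta/2\}$, and decompose $K=\chi K+(1-\chi)K=:K_{1}+K_{2}$. Then $K_{1}$ is a compactly supported distribution with $\supp(K_{1})\subset\{|x|\leq \delta/2\}$, while $K_{2}$ is a smooth function on $\mathbb{R}$ satisfying, by the homogeneity of $K$ off the origin, the pointwise bound $|K_{2}(x)|\lesssim (1+|x|)^{-1-m-s}$. Since $1+m+s>1$ this yields $K_{2}\in L^{1}(\mathbb{R})\cap L^{\infty}(\mathbb{R})\subset L^{2}(\mathbb{R})$, with a norm depending only on $\delta$, $m$ and $s$. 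Writing $\partial_{x}^{m}D^{s}f=K_{1}*f+K_{2}*f$, the first convolution is a distribution supported in $\supp(f)+\{|x|\leq \delta/2\}$, a set disjoint from $\supp(g)$ by \eqref{cond1}. Hence $g\cdot(K_{1}*f)\equiv 0$ and the whole product collapses to $g\cdot(K_{2}*f)$.

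From here the estimate is immediate: by Cauchy--Schwarz $|K_{2}*f(x)|\leq \|K_{2}\|_{2}\|f\|_{2}$ pointwise in $x$, so
\begin{equation*}
\|g\,\partial_{x}^{m}D^{s}f\|_{L^{p}}=\|g(K_{2}*f)\|_{L^{p}}\leq \|K_{2}\|_{2}\|f\|_{2}\|g\|_{L^{p}}
\end{equation*}
for every $2\leq p\leq \infty$, which is the desired inequality with implicit constant depending on $\delta$, $m$, and $s$. The one place where a careful verification is needed is the kernel analysis itself: when $m+s$ is a non-negative even integer the symbol reduces to a polynomial and $K$ collapses to a finite combination of derivatives of $\delta_{0}$, but those are still supported at the origin and absorbed by $K_{1}$, leaving the argument untouched; for all other values of $m+s$ the description of $K$ as a smooth, homogeneously decaying function off $\{0\}$ is the classical theory of homogeneous tempered distributions. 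Once this description is taken for granted, the rest of the proof is essentially a bookkeeping exercise driven entirely by the separation of supports.
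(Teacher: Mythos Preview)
Your argument is correct. The paper itself does not give a proof of this lemma but simply refers the reader to \cite[Lemma~3.28]{AJ}, so there is no in-paper proof to compare against; your kernel-based argument is a standard and self-contained way to establish the result.

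Two small remarks. First, your parenthetical about the degenerate case is slightly misstated: the symbol $(i\xi)^{m}|\xi|^{s}$ reduces to a polynomial precisely when $s$ is a non-negative even integer (not when $m+s$ is even; e.g.\ $m=s=1$ gives $i\xi|\xi|$, which is not a polynomial). This does not affect the argument, since in every case $K$ is homogeneous of degree $-(1+m+s)$ and smooth on $\mathbb{R}\setminus\{0\}$, which is all you use. Second, note that your proof actually gives the estimate for all $1\leq p\leq\infty$, not just $2\leq p\leq\infty$: once you have $\|K_{2}*f\|_{L^{\infty}}\leq \|K_{2}\|_{L^{2}}\|f\|_{L^{2}}$, the multiplication by $g$ places no lower restriction on $p$.
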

\begin{rem}
	A previous estimated was obtained by Kenig et al. \cite{KLPV} when studying the propagation of regularity (fractional case) but for the operator $J^{s}.$
	\end{rem}
\begin{proof}
	See \cite[Lemma 3.28]{AJ}.
\end{proof}
%

	\subsection{Commutator Expansions }

In this section  we present a several auxiliary results obtained by Ginibre and Velo \cite{{GV1}}, \cite{GV2}  which has proved to be useful in the study of propagation of regularity.


Let $a=2\mu+1>1,$   let  $n$ be a nonnegative integer and $f$ be  a smooth  function with suitable  decay at infinity, for instance with $f'\in C^{\infty}_{0}(\mathbb{R}).$ 

We define the operator 

\begin{equation}\label{eq8}
R_{n}(a)=-\left[\mathcal{H} D^{a}; f\right]-\frac{1}{2}\left(P_{n}(a)-\mathcal{H}P_{n}(a)\mathcal{H}\right),
\end{equation}
where 
\begin{equation}\label{eq105}
P_{n}(a)=a\sum_{0\leq j\leq n}c_{2j+1}(-1)^{j}4^{-j}D^{\mu-j}f^{(2j+1)}D^{\mu-j},
\end{equation}
and the constants $c_{2j+1}$ are given by the  following  formula
\begin{equation}
c_{1}=1\quad\mbox{and}\quad c_{2j+1}=\frac{1}{(2j+1)!}\prod_{0\leq k<j}\left(a^{2}-\left(2k+1\right)^{2}\right).
\end{equation}

%
%
%

\begin{prop}\label{propo2}
	Let $n$  be a non-negative  integer,   $a\geq 1,\,$ and   $ \sigma\geq 0,$  be such that 
	\begin{equation}\label{eq21}
	2n+1\leq a+2\sigma\leq2n+3.
	\end{equation}
	
	Then 
	\begin{itemize}
		\item[(a)] The operator $D^{\sigma}R_{n}(a)D^{\sigma}$ is bounded in $L^{2}$ with norm 
		\begin{equation}\label{eq98}
		\left\|D^{\sigma}R_{n}(a)D^{\sigma}f\right\|_{2}\leq C(2\pi)^{-1/2}\left\|\widehat{\left(D^{a+2\sigma}f\right)}\right\|_{1}\|f\|_{2}.
		\end{equation}	
		If $a\geq 2n+1,$ one can take  $C=1.$
		
		\item[(b)] Assume in addition  that
		\begin{equation*}
		2n+1\leq a+2\sigma<2n+3.
		\end{equation*}
		Then the operator ${\displaystyle D^{\sigma}R_{n}(a)D^{\sigma}}$ is compact in $L^{2}(\mathbb{R}).$
	\end{itemize}
\end{prop}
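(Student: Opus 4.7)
The plan is to pass to the Fourier side and recognize $R_{n}(a)$ as a Taylor remainder operator in symbol calculus. Writing $\rho(\xi):=-i\sgn(\xi)|\xi|^{a}$ for the symbol of $\mathcal{H}D^{a}$, the integral kernel of $-[\mathcal{H}D^{a};f]$ in the Fourier variables $(\xi,\eta)$ reads $\widehat{f}(\xi-\eta)\bigl(\rho(\eta)-\rho(\xi)\bigr)$. Changing to sum-difference coordinates $m=(\xi+\eta)/2$, $\zeta=\xi-\eta$, the quantity $\rho(m-\zeta/2)-\rho(m+\zeta/2)$ is odd in $\zeta$, so its formal Taylor expansion at $\zeta=0$ involves only odd derivatives of $\rho$ at $m$. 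A direct algebraic computation, essentially the content of \cite{GV1,GV2}, shows that the $j$-th term in that expansion matches, back in physical space, the operator $a\,c_{2j+1}(-1)^{j}4^{-j}D^{\mu-j}f^{(2j+1)}D^{\mu-j}$, while the Hilbert-conjugation used in $\tfrac{1}{2}(P_n(a)-\mathcal{H}P_n(a)\mathcal{H})$ projects onto the odd-in-$\zeta$ component. Consequently the symbol of $D^{\sigma}R_{n}(a)D^{\sigma}$ equals
\[
|\xi|^{\sigma}|\eta|^{\sigma}\,r_{n}(m,\zeta)\,\widehat{f}(\zeta),
\]
where $r_{n}$ is the Taylor remainder after truncation at order $2n+1$.

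Next I would establish a pointwise bound for this symbol. For $|m|\geq 2|\zeta|$ the segment from $\xi$ to $\eta$ avoids the origin, so the classical integral form of Taylor's remainder yields $|r_{n}(m,\zeta)|\lesssim |\zeta|^{2n+3}|m|^{a-2n-3}$, using $|\rho^{(2n+3)}(y)|\lesssim |y|^{a-2n-3}$. Combining this with $|\xi|^{\sigma}|\eta|^{\sigma}\sim |m|^{2\sigma}$ in this regime and invoking the hypothesis $2n+1\leq a+2\sigma\leq 2n+3$ to balance powers of $|m|$ against powers of $|\zeta|$ produces
\[
|\xi|^{\sigma}|r_{n}(m,\zeta)||\eta|^{\sigma}\lesssim |\zeta|^{a+2\sigma}.
\]
In the complementary region $|m|\leq 2|\zeta|$ every factor is already controlled by $|\zeta|$ and the same bound is immediate. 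With this pointwise control, Plancherel and Young's convolution inequality in the difference variable give
\[
\|D^{\sigma}R_{n}(a)D^{\sigma}g\|_{2}\lesssim \bigl\|\,|\zeta|^{a+2\sigma}\widehat{f}(\zeta)\,\bigr\|_{L^{1}_{\zeta}}\|g\|_{2},
\]
which is exactly \eqref{eq98}. A careful tracking of the constants produced by Taylor's remainder, which simplify in the regime $a\geq 2n+1$ because the segment in $m$ remains in the smooth locus of $\rho$, produces $C=1$ in that range.

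For part (b), under the strict upper bound $a+2\sigma<2n+3$ the pointwise estimate above can be upgraded to $|\xi|^{\sigma}|r_{n}||\eta|^{\sigma}\lesssim |\zeta|^{a+2\sigma}(1+|m|)^{-\delta}$ for some $\delta>0$. I would then approximate $f$ by Schwartz functions whose Fourier transforms are compactly supported; for each such approximant the induced kernel on the Fourier side is Hilbert--Schmidt in $(\xi,\eta)$ thanks to the $(1+|m|)^{-\delta}$ decay, so the associated operator is compact. The norm bound from part (a) guarantees convergence in operator norm, and compactness is preserved in the limit.

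The principal technical obstacle I expect is the lack of smoothness of $\rho$ at $\xi=0$: since $\rho$ is only $C^{\lceil a\rceil-1}$ there, Taylor's theorem cannot be applied naively on a segment that straddles the origin, and the odd derivatives $\rho^{(2j+1)}(m)$ appearing in $P_{n}(a)$ may be singular when $m\to 0$ and $a<2j+1$. This forces a separate analysis of the mixed-sign region $\{\xi\eta<0\}$, where one exploits $|\zeta|\geq |m|$ to estimate both $\rho(\xi)-\rho(\eta)$ and the truncated expansion directly as pieces of size $|\zeta|^{a}$, absorbed into $|\zeta|^{a+2\sigma}$ after pairing with $|\xi|^{\sigma}|\eta|^{\sigma}\lesssim |\zeta|^{2\sigma}$. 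The rest of the argument is standard Fourier-side bookkeeping of the coefficients $c_{2j+1}$, which are engineered precisely to recover the Taylor coefficients of $\rho$ at the midpoint.
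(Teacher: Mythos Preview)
The paper does not give a proof here; it simply refers to Proposition~2.2 of Ginibre--Velo \cite{GV2}. Your sketch is a faithful reconstruction of that argument: the passage to Fourier variables, the identification of $P_{n}(a)$ with the odd-order midpoint Taylor polynomial of the symbol $\rho(\xi)=-i\sgn(\xi)|\xi|^{a}$, the two-region pointwise remainder estimate, and the Schur/Young step are exactly what Ginibre and Velo do. Your diagnosis of the singularity of $\rho$ at the origin as the chief technical nuisance is also correct.

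One point in part (b) deserves care. The global bound $|\xi|^{\sigma}|r_{n}||\eta|^{\sigma}\lesssim |\zeta|^{a+2\sigma}(1+|m|)^{-\delta}$ is not available in the region $|m|\leq 2|\zeta|$, and even after restricting to $\widehat{f}$ of compact support the resulting kernel need not be Hilbert--Schmidt when $a+2\sigma$ is close to $2n+3$: the tail in $m$ decays like $|m|^{a+2\sigma-2n-3}$, which is not square-integrable once $a+2\sigma\geq 2n+\tfrac{5}{2}$. The fix, which is what \cite{GV2} actually does, is to truncate additionally in the midpoint variable $m$: the piece with $|m|\leq M$ is then a bounded kernel of compact support in $(\xi,\eta)$ (hence Hilbert--Schmidt), while the strict inequality $a+2\sigma<2n+3$ makes the Schur bound on the tail $|m|>M$ tend to zero as $M\to\infty$. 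Your operator-norm-limit strategy is the right one; only the mechanism for compactness of the approximants needs this adjustment.
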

\begin{proof}
	See  Proposition 2.2 in \cite{GV2}.
\end{proof}
\begin{rem}
 Proposition \ref{propo2} is a generalization from previous results, where   the derivatives of operator $R_{n}(a)$  are not  consider  (cf.  Proposition 1 in \cite{GV1}). 
\end{rem}
%
%
%
Also, a direct application of the commutator decomposition\eqref{eq8} is the smoothing effect associated to solutions of the IVP \eqref{e1}. 
\begin{prop}\label{fkdv1}
	Let $\varphi $ denote  a nondecreasing  smooth  function such that  $\supp\varphi'\subset (-1,2)$ and $\varphi|_{[0,1]}\equiv 1.$  For $j\in \mathbb{Z},$  we define  $\varphi_{j}(x)=\varphi(x-j).$  Let $u\in C\left([0,T]: H^{\infty}(\mathbb{R})\right)$ be a smooth solution  of (\ref{e1})  with $0<\alpha<1$.  Assume also that  $s\geq 0$  and $r>\frac{1}{2}.$  Then,
	\begin{equation}
	\begin{split}
	&\left(\int_{0}^{T}\int_{\mathbb{R}} \left(\left|D_{x}^{s+\frac{\alpha}{2}}u(x,t)\right|^{2}+\left|\mathcal{H}D_{x}^{s+\frac{\alpha}{2}}u(x,t)\right|^{2}\right)\varphi_{j}'(x)\,\mathrm{d}x\,\mathrm{d}t\right)^{1/2}\\
	&\lesssim \left(1+T+\|\partial_{x}u\|_{L^{1}_{T}L^{\infty}_{x}}+ T\|u\|_{L^{\infty}_{T}H^{r}_{x}}\right)^{1/2} \|u\|_{L^{\infty}_{T}H^{s}_{x}}.
	\end{split}
	 	 \end{equation}
	 	 \begin{proof}
	 	 As was mentioned above the proof use the decomposition \eqref{eq8} and a application of   Mikhlin's Theorem. For a detailed description of the proof see   \cite[Proposition 2.12]{LIPICLA}.
	 	 \end{proof}
\end{prop}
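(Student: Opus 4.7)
The plan is to run a weighted $L^2$ energy estimate at order $s$ on \eqref{e1} with the weight $\varphi_j$, and read the smoothing effect off the commutator between the skew-adjoint dispersive operator and the weight. Applying $D_x^s$ to the equation and using $D_x^\alpha\partial_x = -\mathcal{H}D_x^{\alpha+1}$ gives $\partial_t D_x^s u + \mathcal{H}D_x^{s+\alpha+1}u = -D_x^s(u\partial_x u)$. Testing against $\varphi_j D_x^s u$ produces the boundary term $\tfrac12 \tfrac{d}{dt}\int (D_x^s u)^2 \varphi_j\,dx$, the dispersive term $\mathcal{D}:=\int \mathcal{H}D_x^{\alpha+1}(D_x^s u)\,\varphi_j D_x^s u\,dx$, and a nonlinear contribution that will be moved to the right-hand side.

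For $\mathcal{D}$, the skew-adjointness of $\mathcal{H}D_x^{\alpha+1}$ allows one to write $\mathcal{D} = -\tfrac12\int v\,[\mathcal{H}D_x^{\alpha+1},\varphi_j]\,v\,dx$ with $v=D_x^s u$. Applying the Ginibre--Velo decomposition \eqref{eq8} in the admissible range $a=\alpha+1\in(1,2)$, $n=0$, $\mu=\alpha/2$, $c_1=1$ yields
\begin{equation*}
[\mathcal{H}D_x^{\alpha+1},\varphi_j] = -R_0(\alpha+1) - \tfrac{\alpha+1}{2}\bigl(D_x^{\alpha/2}\varphi_j' D_x^{\alpha/2} - \mathcal{H}D_x^{\alpha/2}\varphi_j' D_x^{\alpha/2}\mathcal{H}\bigr),
\end{equation*}
so that, after a self-adjointness integration by parts with $D_x^{\alpha/2}$, the dispersive term decomposes as
\begin{equation*}
\mathcal{D}=\tfrac{1}{2}\int v\,R_0(\alpha+1)v\,dx +\tfrac{\alpha+1}{4}\int \varphi_j'\bigl((D_x^{s+\alpha/2}u)^2+(\mathcal{H}D_x^{s+\alpha/2}u)^2\bigr)dx.
\end{equation*}
This extracts exactly the nonnegative smoothing quantity we wish to control, and the remainder $\int v R_0(\alpha+1)v\,dx$ is bounded by $C_\varphi\|u\|_{H^s}^2$ via Proposition \ref{propo2}(a) (with $\sigma=0$ and $n=0$, since $1\le \alpha+1\le 3$), where $C_\varphi$ depends only on derivatives of the bump $\varphi$ and is uniform in $j$ by translation invariance.

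For the nonlinear term I would split $D_x^s(u\partial_x u)=u\,\partial_x D_x^s u+[D_x^s,u]\partial_x u$; the first summand gives $-\tfrac12\int \partial_x(u\varphi_j)(D_x^s u)^2\,dx$, controlled by $(\|\partial_x u\|_\infty + \|u\|_{H^r})\|u\|_{H^s}^2$ (using Sobolev embedding $r>1/2$ to bound $\|u\|_\infty$), and the commutator is handled by the refined Kato--Ponce bound \eqref{kpdl}, giving $\|[D_x^s,u]\partial_x u\|_2\lesssim \|\partial_x u\|_\infty\|u\|_{H^s}$ and hence a contribution $\lesssim \|\partial_x u\|_\infty\|u\|_{H^s}^2$. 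Integrating the identity in $t\in[0,T]$ and collecting terms produces
\begin{equation*}
\int_0^T\!\!\int \varphi_j'\bigl((D_x^{s+\alpha/2}u)^2+(\mathcal{H}D_x^{s+\alpha/2}u)^2\bigr)dx\,dt\lesssim \bigl(1+T+\|\partial_x u\|_{L^1_T L^\infty_x}+T\|u\|_{L^\infty_T H^r_x}\bigr)\|u\|_{L^\infty_T H^s_x}^2,
\end{equation*}
and taking square roots yields the stated inequality. The main obstacle is the term $\int v R_0(\alpha+1)v\,dx$: unlike the smoothing piece it is not localized by $\varphi_j'$, so one has to verify---via Proposition \ref{propo2}---that it still gives a bound depending only on fixed (finitely many) norms of $\varphi$, uniformly in $j$. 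A secondary subtlety is maintaining the nonlinear terms in a quadratic form so they are absorbed by the only time-integrable norm $\|\partial_x u\|_{L^1_T L^\infty_x}$ guaranteed by Theorem \ref{lt}, rather than by a time-$L^\infty$ norm that is not available at this level of regularity.
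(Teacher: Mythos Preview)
Your proof is correct and follows essentially the same route as the paper indicates: the weighted energy estimate, the Ginibre--Velo commutator decomposition \eqref{eq8} applied with $a=\alpha+1$ and $n=0$ to extract the positive smoothing term, and the bound on the remainder $R_0(\alpha+1)$ via Proposition~\ref{propo2}. The paper's sketch additionally mentions Mikhlin's theorem (as used in \cite{LIPICLA}), but in your argument this is subsumed by the direct appeal to Proposition~\ref{propo2}; the two are equivalent here, and your treatment of the nonlinear term via the splitting $D_x^s(u\partial_x u)=u\,\partial_x D_x^s u+[D_x^s,u]\partial_x u$ together with Lemma~\ref{dlkp} is exactly what is needed to land on the stated right-hand side.
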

\begin{rem}
In the particular  case that $u_{0}$  is  in the Sobolev space $H^{s_{\alpha}}(\mathbb{R}),\, s_{\alpha}=2-\frac{\alpha}{2},$ the gain of local derivatives is summarized in the following inequality
	\begin{equation}\label{kse}
\begin{split}
&\left(\int_{0}^{T}\int_{-r}^{r} \left(\left|\partial_{x}^{2}u(x,t)\right|^{2}+\left|\mathcal{H}\partial_{x}^{2}u(x,t)\right|^{2}\right)\,\mathrm{d}x\,\mathrm{d}t\right)^{1/2}\leq C\left(r;T;\|u_{0}\|_{s_{\alpha},2}\right)
\end{split}
\end{equation}
 for any $r>0.$
 The proof follows combining  the  ideas  used in the proof of  Proposition \ref{fkdv1} and the  inequality 
 \begin{equation}
 \|u\|_{L^{\infty}_{T}H^{s_{\alpha}}_{x}}\lesssim \|u_{0}\|_{H^{s}_{x}}\ex^{\|\partial_{x}u\|_{L^{1}_{T}L^{\infty}_{x}}}.
 \end{equation}
 The last inequality above is widely know in the literature and its proof is based on  energy estimate   combined with commutator estimates Kato-Ponce type. For a detailed proof see for instance  \cite[Chapter 9, p.~221]{LIPO}
	\end{rem}

\section{Weighted  functions}
This section is devoted to describe the  the weighted  functions used in the proof of Theorem \ref{A}, as well as their  properties. 

Most of these functions where originally constructed  when studying the propagation of regularity phenomena  for solutions of the KdV \cite{ILP1}, \cite{KLPV}. 

For  $\epsilon>0$ and $b\geq 5\epsilon$ define the  families of functions 
\begin{equation*}
\chi_{\epsilon,b},\;  \phi_{\epsilon,b},\; \widetilde{\phi_{\epsilon,b}},\; \psi_{\epsilon},\eta_{\epsilon,b}, \varphi_{\epsilon,b}\in C^{\infty}(\mathbb{R})
\end{equation*}
satisfying the following properties:
\begin{enumerate}
	\item ${\displaystyle 
		\chi_{\epsilon,b}'\geq 0,
	}$	
	\item ${\displaystyle 
		\chi_{\epsilon,b}(x)=
		\left\{
		\begin{array}{ll}
		0, & x\leq \epsilon \\
		1, & x\geq b,
		\end{array} 
		\right. 
	}$
	\item ${\displaystyle  \supp(\chi_{\epsilon, b})\subseteq [\epsilon,\infty);}$
	\item ${\displaystyle \chi_{\epsilon, b}'(x)\geq\frac{1}{10(b-\epsilon)}\mathbb{1}_{[2\epsilon,b-2\epsilon]}(x),}$
	\item ${\displaystyle \supp\left(\chi_{\epsilon,b}'\right) \subseteq [\epsilon,b].}$ 
	\item There exists real numbers $c_{j}$ such that
	\begin{equation*}
	\left|\chi_{\epsilon, b}^{(j)}(x)\right|\leq c_{j}\chi_{\epsilon/3, b+\epsilon}'(x),\quad \forall x\in \mathbb{R},\,j\in \mathbb{Z}^{+}.
	\end{equation*} 
	\item For $x\in (3\epsilon,\infty)$ 
	\begin{equation*}
	\chi_{\epsilon, b}(x)\geq\frac{1}{2}\frac{\epsilon}{b-3\epsilon}.
	\end{equation*}
		\item For $x\in \mathbb{R}$
	\begin{equation*}
	\chi_{\epsilon/3,b+\epsilon}'(x)\leq \frac{\epsilon}{b-3\epsilon}.
	\end{equation*}
	\item Also, given $\epsilon>0$ and $b\geq 5\epsilon$ there exist $c_{1},c_{2}>0$ such that 
	\[ \begin{array}{lcr}
	\chi_{\epsilon,b}'(x)\leq c_{1}\chi_{\epsilon/3,b+\epsilon}'(x)\chi_{\epsilon/3,b+\epsilon}(x), & \\
	\chi_{\epsilon,b}'(x)\leq c_{2} \chi_{\epsilon/5, \epsilon}(x). & \\
	\end{array}\] 
	\item For $\epsilon>0 $ given and $b\geq 5\epsilon,$ we define the functions 
	\begin{equation*}
	\eta_{\epsilon,b}=\sqrt{\chi_{\epsilon, b}\chi_{\epsilon,b }'}\quad\mbox{and}\quad \varphi_{\epsilon,b}=\sqrt{\chi_{\epsilon, b}'}.
	\end{equation*}
	\item ${\displaystyle \supp\left(\phi_{\epsilon,b}\right),\,\supp\left(\widetilde{\phi_{\epsilon, b}}\right)\subset \left[\epsilon/4,b\right],}$
	\item ${\displaystyle \phi_{\epsilon}(x)=\widetilde{\phi_{\epsilon, b}}(x)=1,\quad x\in [\epsilon/2,\epsilon],}$
	\item ${\displaystyle \supp(\psi_{\epsilon})\subseteq\left(-\infty,\epsilon/2\right]},$
	\item for $x\in \mathbb{R}$
	\begin{equation*}
	\chi_{\epsilon, b}(x)+\phi_{\epsilon, b}(x)+\psi_{\epsilon}(x)=1,
	\end{equation*}
	and 
	\begin{equation*}
	\chi_{\epsilon, b}^{2}(x)+\widetilde{\phi_{\epsilon, b}}^{2}(x)+\psi_{\epsilon}(x)=1.
	\end{equation*}
\end{enumerate}
The family ${\displaystyle \{\chi_{\epsilon, b}: \epsilon>0,\, b\geq 5\epsilon\}}$ is constructed as follows:
let  $\rho\in C^{\infty}_{0}(\mathbb{R}),\, \rho(x)\geq 0,$  even, with $\supp(\rho)\subseteq(-1,1)$ and $ \|\rho\|_{1}=1.$

Then defining
\begin{equation*}
\nu_{\epsilon,b}(x)=\left\{
\begin{array}{lll}
0, & x\leq 2\epsilon ,\\
\frac{x}{b-3\epsilon}-\frac{2\epsilon}{b-3\epsilon}, & 2\epsilon\leq x\leq b-\epsilon,\\
1,& x\geq b-\epsilon,
\end{array} 
\right. 
\end{equation*}
and
\begin{equation*}
\chi_{\epsilon,b}(x)=\rho_{\epsilon}*\nu_{\epsilon,b}(x)
\end{equation*}
where $\rho_{\epsilon}(x)=\epsilon^{-1}\rho(x/\epsilon).$

\section{Proof of Theorem \ref{A}}
We shall use two induction arguments. One for $m\in \mathbb{Z}^{+}$ with $m \geq 2$ and the other one  for $m+\alpha k,$ where $k$ is a positive integer to be specified later in the proof.

Since the solutions of the IVP (\ref{e1}) are translation invariant, then without  loss of generality we will take $x_{0}=0.$ 

Also, it  will be assumed that  solutions of the IVP (\ref{e1}) are  real valued functions with   
as much  regularity as required. 
	\begin{flushleft}
\textbf{\underline{{\sc Case $m=2:$}}}
	\end{flushleft}
\begin{flushleft}
	\fbox{{\sc Step 1:}}
\end{flushleft}
Formally, we apply $\partial_{x}^{2}$ to the equation in (\ref{e1})  followed by a multiplication by $\partial_{x}^{2}u\chi_{\epsilon, b}^{2}$  to obtain 
\begin{equation*}
\begin{split}
\partial_{x}^{2}\partial_{t}u\partial_{x}^{2}u\chi_{\epsilon, b}^{2}-\partial_{x}^{2}D_{x}^{\alpha}\partial_{x}u \partial_{x}^{2}u\chi_{\epsilon, b}^{2}+\partial_{x}^{2}\left(u\partial_{x}u\right)\partial_{x}^{2}u\chi_{\epsilon, b}^{2}=0
\end{split}
\end{equation*} 
 after integrating in the position variable, we obtain the energy identity
\begin{equation*}
\begin{split}
&\frac{1}{2}\frac{\mathrm{d}}{\mathrm{d}t}\int_{\mathbb{R}}\left(\partial_{x}^{2}u\right)^{2}\chi_{\epsilon, b}^{2}\,\mathrm{d}x-\underbrace{\frac{v}{2}\int_{\mathbb{R}}\left(\partial_{x}^{2}u\right)^{2}\left(\chi_{\epsilon, b}^{2}\right)'\,\mathrm{d}x}_{B_{1}(t)}-\underbrace{\int_{\mathbb{R}}\left(\partial_{x}^{2}D_{x}^{\alpha}\partial_{x}u\right)\partial_{x}^{2}u\chi_{\epsilon, b}^{2}\,\mathrm{d}x}_{B_{2}(t)}\\
&+\underbrace{\int_{\mathbb{R}}\partial_{x}^{2}\left(u\partial_{x}u\right)\partial_{x}^{2}u\chi_{\epsilon, b}^{2}\,\mathrm{d}x}_{B_{3}(t)}=0.
\end{split}
\end{equation*}
\S.1   To handle $B_{1}$ first notice that  there exists  $c>0$  and  $r>0$ such that  
\begin{equation}\label{kdv5}
\left(\chi_{\epsilon,b }^{2}\right)'(x+vt)=2 \chi_{\epsilon, b}(x+vt)\chi_{\epsilon, b}'(x+vt)\leq c \mathbb{1}_{[-r,r]}(x)
\end{equation}
for  all $(x,t)\in \mathbb{R}\times[0,T].$ Thus, by Remark \ref{kse},  it follows that  
\begin{equation}\label{kdv6}
\begin{split}
\int_{0}^{T}|B_{1}(t)|\,\mathrm{d}t&\lesssim \int_{0}^{T}\int_{\mathbb{R}}\left(\partial_{x}^{2}u\right)^{2}\left(\chi_{\epsilon, b}^{2}\right)'\mathrm{d}x\mathrm{d}t\\
&\lesssim\int_{0}^{T}\int_{-r}^{r}\left(\partial_{x}^{2}u\right)^{2}\mathrm{d}x\mathrm{d}t\\
&\leq c\left(\|u_{0}\|_{s_{\alpha},2};r;T\right).
\end{split}
\end{equation}
\S.2  Now,  we  extract information from the term handling the dispersive part of the equation in \eqref{e1}.

Combining  integration  by parts  and Plancherel's identity  it follows that    
\begin{equation} 
\begin{split}
B_{2}(t)&=-\frac{1}{2}\int_{\mathbb{R}}\partial_{x}^{2}u\left[\mathcal{H}D_{x}^{\alpha+1}; \chi_{\epsilon, b}\right]\partial_{x}^{2}u\,\mathrm{d}x\\
&=-\frac{1}{2}\int_{\mathbb{R}}D_{x}^{2}u\left[\mathcal{H}D_{x}^{\alpha+1}; \chi_{\epsilon, b}\right]D_{x}^{2}u\,\mathrm{d}x.
\end{split}
\end{equation}
Then  using  \eqref{eq8} 
 \begin{equation}\label{eqdkv.2}
\begin{split}
\left[\mathcal{H}D_{x}^{\alpha+1};\chi_{\epsilon, b}\right]=-R_{n}(\alpha+1)-\frac{1}{2}P_{n}(\alpha+1)+ \frac{1}{2}\mathcal{H}P_{n}(\alpha+1)\mathcal{H}
\end{split}
\end{equation}
for some nonnegative  integer $n$  to be fixed. This will help to obtain the smoothing effect.

Indeed, replacing this decomposition into $B_{2}$ yields

\begin{equation*}
\begin{split}
B_{2}(t)&=\frac{1}{2}\int_{\mathbb{R}}D_{x}^{2}uR_{n}(\alpha+1)D_{x}^{2}u\,\mathrm{d}x+\frac{1}{4}\int_{\mathbb{R}} D_{x}^{2}uP_{n}(\alpha+1)D_{x}^{2}u\,\mathrm{d}x\\
&\quad -\frac{1}{4}\int_{\mathbb{R}} D_{x}^{2}u\mathcal{H}P_{n}(\alpha+1)\mathcal{H}D_{x}^{2}u\,\mathrm{d}x\\
&=B_{2,1}(t)+B_{2,2}(t)+B_{2,3}(t).
\end{split}
\end{equation*}
The quantity of terms $n,$ will be   chosen  according to the following rule (see \eqref{eq21})
\begin{equation*}
2n+1\leq \alpha+5\leq 2n+3,
\end{equation*} 
which clearly implies $n=2.$

In view of \eqref{eq98} the remainder operator $R_{2}(\alpha+1)$ satisfies
\begin{equation}\label{kdvd1}
\left \|D_{x}^{2}R_{2}(\alpha+1)D_{x}^{2}f\right\|_{2}\lesssim \|f\|_{2}\left\|\widehat{D_{x}^{\alpha+5}\left(\chi_{\epsilon, b}^{2}\right)}\right\|_{1},
\end{equation}
for $f$ in a suitable class of functions.

For our proposes a  combination of \eqref{kdvd1}   with  H\"{o}lder's inequality  and Plancherel's identity is sufficient to obtain
\begin{equation*}
\begin{split}
B_{2,1}(t)&=\frac{1}{2}\int_{\mathbb{R}}uD_{x}^{2}R_{2}(\alpha+1)D_{x}^{2}u\,\mathrm{d}x\lesssim \|u_{0}\|_{2}^{2}\left\|\widehat{D_{x}^{\alpha+5}\left(\chi_{\epsilon, b}^{2}\right)}\right\|_{1}.
\end{split}
\end{equation*}
Therefore, integrating in time  yields
\begin{equation*}
\int_{0}^{T}|B_{2,1}(t)|\,\mathrm{d}t\leq c.
\end{equation*}
Replacing  $P_{2}(\alpha+1)$ into $B_{2,2}$ and $B_{2,3}$ produce 
\begin{equation*}
\begin{split}
B_{2,2}(t)&=\left(\frac{\alpha+1}{4}\right)\int_{\mathbb{R}}\left(D_{x}^{2+\frac{\alpha}{2}}u\right)^{2}\left(\chi_{\epsilon, b}^{2}\right)'\,\mathrm{d}x-c_{3}\left(\frac{\alpha+1}{16}\right)\int_{\mathbb{R}}
\left(D_{x}^{1+\frac{\alpha}{2}}u\right)^{2} \left(\chi_{\epsilon, b}^{2}\right)^{(3)}\,\mathrm{d}x\\
&\quad +c_{5}\left(\frac{\alpha+1}{64}\right)\int_{\mathbb{R}} \left(D_{x}^{\frac{\alpha}{2}}u\right)^{2}\left(\chi_{\epsilon, b}^{2}\right)^{(5)}\,\mathrm{d}x\\
&=B_{2,2,1}(t)+B_{2,2,2}(t)+B_{2,2,3}(t).
\end{split}
\end{equation*}
and 
\begin{equation*}
\begin{split}
B_{2,3}(t)&= \left(\frac{\alpha+1}{4}\right)\int_{\mathbb{R}}\left(\mathcal{H}D_{x}^{2+\frac{\alpha}{2}}u\right)^{2}\left(\chi_{\epsilon, b}^{2}\right)'\,\mathrm{d}x\\
&\, -c_{3}\left(\frac{\alpha+1}{16}\right)\int_{\mathbb{R}}
\left(\mathcal{H}D_{x}^{1+\frac{\alpha}{2}}u\right)^{2} \left(\chi_{\epsilon, b}^{2}\right)^{(3)}\mathrm{d}x+c_{5}\left(\frac{\alpha+1}{64}\right)\int_{\mathbb{R}} \left(\mathcal{H}D_{x}^{\frac{\alpha}{2}}u\right)^{2}\left(\chi_{\epsilon, b}^{2}\right)^{(5)}\mathrm{d}x\\
&=B_{2,3,1}(t)+B_{2,3,2}(t)+B_{2,3,3}(t).
\end{split}
\end{equation*}
Notice that  $B_{2,2,1}$ and $B_{2,3,1}$ are positive and represent the smoothing effect. Then we need to bound  the terms $B_{2,2,2},B_{2,2,3}, B_{2,3,2},$ and $B_{2,3,3}.$

After integrate in time the  terms $B_{2,2,2},B_{2,3,2},B_{2,2,3},$ and  $B_{2,3,3}$  can be handled by using the local theory i.e.
\begin{equation*}
\begin{split}
&\int_{0}^{T}|B_{2,2+l,2}(t)|\mathrm{d}t\lesssim \sup_{0\leq t \leq T}\|u(t)\|_{s_{\alpha},2} \quad\mbox{for}\quad l\in\{0,1\},
\end{split}
\end{equation*}
and 
\begin{equation*}
\begin{split}
&\int_{0}^{T}|B_{2,2+l,3}(t)|\mathrm{d}t\lesssim \sup_{0\leq t \leq T}\|u(t)\|_{s_{\alpha},2} \quad\mbox{for}\quad  l\in\{0,1\}.
\end{split}
\end{equation*}
\S.3 \quad Finally, we     handle $B_{3}$ firstly  applying  integration by parts as follows  
\begin{equation*}
\begin{split}
B_{3}(t)&=\frac{5}{2}\int_{\mathbb{R}}\partial_{x}u\left(\partial_{x}^{2}u\right)^{2}\chi_{\epsilon, b}^{2}\,\mathrm{d}x-\frac{1}{2}\int_{\mathbb{R}}u\left(\partial_{x}^{2}u\right)^{2}\left(\chi_{\epsilon,b}^{2}\right)'\,\mathrm{d}x\\&=B_{3,1}(t)+B_{3,2}(t).
\end{split}
\end{equation*}  

Since  $u$  satisfies  Strichartz  estimate i.e. $\partial_{x}u\in L^{1}\left([0,T]:L^{\infty}(\mathbb{R})\right)
$ in Theorem \ref{lt}, then 
\begin{equation}\label{kdvf1}
\begin{split}
|B_{3,1}(t)|&\lesssim \|\partial_{x}u(t)\|_{\infty}\int_{\mathbb{R}}\left(\partial_{x}^{2}u\right)^{2}\chi_{\epsilon, b}^{2}\,\mathrm{d}x.
\end{split}
\end{equation}
 We shall also point out that the integral expression on  the right hand side of \eqref{kdvf1}  will be estimated by using Gronwall's inequality.
 
An application of  Sobolev's embedding  produces 
\begin{equation}
\begin{split}
|B_{3,2}(t)|&\lesssim \|u(t)\|_{\infty}\int_{\mathbb{R}} \left(\partial_{x}^{2}u\right)^{2}\left(\chi_{\epsilon, b}^{2}\right)'\,\mathrm{d}x\\
&\lesssim\sup_{0\leq t \leq T} \|u(t)\|_{s_{\alpha},2}\int_{\mathbb{R}} \left(\partial_{x}^{2}u\right)^{2}\left(\chi_{\epsilon, b}^{2}\right)'\,\mathrm{d}x.
\end{split}
\end{equation}
 We finish this step    gathering the estimates above, this  together with an application of   Gronwall's inequality  yield
\begin{equation}\label{eqfkdv1}
\begin{split}
&\sup_{0\leq t\leq  T}\left\|\partial_{x}^{2}u\chi_{\epsilon, b}(\cdot+vt)\right\|_{2}^{2}+\left\|D_{x}^{2+\frac{\alpha}{2}}u\eta_{\epsilon,b}^{2}\right\|_{L^{2}_{T}L^{2}_{x}}^{2}+\left\|\mathcal{H}D_{x}^{2+\frac{\alpha}{2}}u\eta_{\epsilon,b}^{2}\right\|_{L^{2}_{T}L^{2}_{x}}^{2}\leq c^{*}_{2,1}
\end{split}
\end{equation}
for any $\epsilon>0,\, b\geq 5\epsilon$ and $v\geq 0.$

At this point several issues shall be clarified and fixed. First, we  indicate the  dependence on  the parameters involved in the constant $c^{*}_{2,1},$ this will be crucial later when we will consider the limit process. More precisely,  $c^{*}_{2,1}=c^{*}_{2,1}\left(\alpha; \epsilon; T;v; \|u_{0}\|_{s_{\alpha},2}; \left\|\partial_{x}^{2}u_{0}\chi_{\epsilon, b}\right\|_{2}\right).$ 

These families of constants are distinguished in our argument, so that we will differentiate them  by fixing  the following notation: for   $m,n\in\mathbb{Z}^{+},$ the number  $c^{*}_{m,n}$  corresponds to the case  of $m-$derivatives  in the $n$th step of the induction process. 

Without more delays we proceed to  the case 2 in our inductive process. This  step  is summarized in the following diagram
 \begin{center}\label{e2}
 	{\small
 		\begin{tikzcd}
 		\partial_{x}^{2}u\chi_{\epsilon, b}^{2} \arrow[r, blue] \arrow[d,"D_{x}^{\alpha/2}"] & D_{x}^{\alpha/2}\partial_{x}^{2}u(\chi_{\epsilon, b}^{2})' \arrow[dl,dashrightarrow, red]\\
 		D_{x}^{\alpha/2}\partial_{x}^{2}u\chi_{\epsilon, b}^{2} \arrow[r, blue]& D_{x}^{\alpha}\partial_{x}^{2}u(\chi_{\epsilon, b}^{2})'.\\
 		\end{tikzcd}}
 \end{center}
 The columns on the left hand side indicates the propagation of regularity, which is carried out by steps of length $\alpha/2,$ except for the final step that will be exemplified later. Instead, the columns on the right hand side furnish  the smoothing effect obtained at that level of   propagation.  
  
  Finally, the diagonal lines are to indicate the dependence of the smoothing effect in the next level of propagation.  
 
%
 Now that the strategy has been explained we proceed to estimate.
\begin{flushleft}
	\fbox{{\sc Step 2:}}
\end{flushleft}
After apply  the operator $D_{x}^{\frac{\alpha}{2}}\partial_{x}^{2}$ to the equation in  \eqref{e1} and multiply the resulting equation  by $D_{x}^{\frac{\alpha}{2}}\partial_{x}^{2}u\chi_{\epsilon, b}^{2}(x+vt)$ one gets
 \begin{equation*}
 \begin{split}
 &\frac{1}{2}\frac{\mathrm{d}}{\mathrm{d}t}\int_{\mathbb{R}}\left(\partial_{x}^{2}D_{x}^{\frac{\alpha }{2}}u\right)^{2}\chi_{\epsilon, b}^{2}\,\mathrm{d}x-\underbrace{\frac{v}{2}\int_{\mathbb{R}}\left(\partial_{x}^{2}D_{x}^{\frac{\alpha }{2}}u\right)^{2}\left(\chi_{\epsilon, b}^{2}\right)'\,\mathrm{d}x}_{B_{1}(t)}\\
 &
 -\underbrace{\int_{\mathbb{R}}\left(\partial_{x}^{2}D_{x}^{\frac{\alpha }{2}}D_{x}^{\alpha}\partial_{x}u\right)\partial_{x}^{2}D_{x}^{\frac{\alpha }{2}}u\chi_{\epsilon, b}^{2}\,\mathrm{d}x}_{B_{2}(t)}+\underbrace{\int_{\mathbb{R}} \left(
 	\partial_{x}^{2}D_{x}^{\frac{\alpha }{2}}\left(u\partial_{x}u\right)\right)\partial_{x}^{2}D_{x}^{\frac{\alpha }{2}}u\chi_{\epsilon, b}^{2}\,\mathrm{d}x}_{B_{3}(t)}=0.
 \end{split}
 \end{equation*}
\S.1\quad   As we indicated in the diagram above, to handle  $\|B_{1}\|_{1}$   it is only  required to use  \eqref{eqfkdv1}. Then,    
\begin{equation}\label{eq1.4}
\begin{split}
\int_{0}^{T}|B_{1}(t)|\,\mathrm{d}t&\leq \frac{|v|}{2}\int_{0}^{T}\int_{\mathbb{R}} \left(\partial_{x}^{2}D_{x}^{\frac{\alpha }{2}}u\right)^{2}\left(\chi_{\epsilon, b}^{2}\right)'\,\mathrm{d}x\,\mathrm{d}t\leq c^{*}_{2,1}.
\end{split}
\end{equation}
\S.2 The term $B_{2}$ shall be rewritten  as was done before in order to obtain the corresponding  smoothing effect in  this step. Thus, we  apply integration by parts and Plancherel's identity to get 
\begin{equation}\label{kdv2}
\begin{split}
B_{2}(t) 
&=-\frac{1}{2}\int_{\mathbb{R}}D_{x}^{2+\frac{\alpha }{2}}u\left[\mathcal{H}D_{x}^{\alpha+1};\chi_{\epsilon, b}^{2}\right]D_{x}^{2+\frac{\alpha }{2}}u\,\mathrm{d}x.
\end{split}
\end{equation}
By using    the commutator decomposition \eqref{eqdkv.2} into \eqref{kdv2}we have 
\begin{equation*}
\begin{split}
B_{2}(t)
&=\frac{1}{4}\int_{\mathbb{R}}D_{x}^{2+\frac{\alpha }{2}}uR_{n}(\alpha+1)D_{x}^{2+\frac{\alpha }{2}}u\,\mathrm{d}x+\frac{1}{4}\int_{\mathbb{R}}D_{x}^{2+\frac{\alpha }{2}}uP_{n}(\alpha+1)D_{x}^{2+\frac{\alpha }{2}}u\,\mathrm{d}x\\
&\quad -\frac{1}{4}\int_{\mathbb{R}}D_{x}^{2+\frac{\alpha }{2}}u\mathcal{H}P_{n}(\alpha+1)\mathcal{H}D_{x}^{2+\frac{\alpha }{2}}u\,\mathrm{d}x\\
&=B_{2,1}(t)+B_{2,2}(t)+B_{2,3}(t),
\end{split}
\end{equation*}
for some positive integer $n$  to be fixed.

From this term we obtain the smoothing effect by using a similar analysis as the used in the previous step. This argument  allow us to fix $n=2,$ and for this particular value, 
 the operator $R_{2}(\alpha+1)$ is bounded from $L^{2}(\mathbb{R})$ into $L^{2}(\mathbb{R}).$
 
 
 Hence  replacing $P_{2}(\alpha+1)$ into $B_{2,2}$  and $B_{2,3}$  lead us to,  in first place that
\begin{equation*}
\begin{split}
B_{2,2}(t)&=
\left(\frac{\alpha+1}{4}\right)\int_{\mathbb{R}}\left(D_{x}^{2+\alpha}u\right)^{2}(\chi_{\epsilon,b }^{2})'\,\mathrm{d}x-c_{3}\left(\frac{\alpha+1}{16}\right)\int_{\mathbb{R}}\left(D_{x}^{1+\alpha}u\right)^{2}(\chi_{\epsilon, b}^{2})^{(3)}\mathrm{d}x\\
&\quad +c_{5}\left(\frac{\alpha+1}{64}\right)\int_{\mathbb{R}}\left(D_{x}^{\alpha}u\right)^{2}(\chi_{\epsilon, b}^{2})^{(5)}\mathrm{d}x\\
&=B_{2,2,1}(t)+B_{2,2,2}(t)+B_{2,2,3}(t),
\end{split}
\end{equation*}
and in second place 
\begin{equation*}
\begin{split}
B_{2,3}(t)&=
\left(\frac{\alpha+1}{4}\right)\int_{\mathbb{R}}\left(\mathcal{H}D_{x}^{2+\alpha}u\right)^{2}(\chi_{\epsilon, b}^{2})'\mathrm{d}x
-c_{3}\left(\frac{\alpha+1}{16}\right)\int_{\mathbb{R}}\left(\mathcal{H}D_{x}^{1+\alpha}u\right)^{2}(\chi_{\epsilon, b}^{2})^{(3)}\mathrm{d}x\\
&\quad +c_{5}\left(\frac{\alpha+1}{64}\right)\int_{\mathbb{R}}\left(\mathcal{H}D_{x}^{\alpha}u\right)^{2}\left(\chi_{\epsilon, b}^{2}\right)^{(5)}\mathrm{d}x\\
&= B_{2,3,1}(t)+B_{2,3,2}(t)+B_{2,3,3}(t).
\end{split}
\end{equation*}
The terms  $B_{2,2,1}$ and $B_{2,3,1}$ are positives and give us the smoothing effect.
We estimate next   $B_{2,2,2}$ and $B_{2,3,2}$. To avoid repetitions,  we  only show the  procedures how to bound $B_{2,3,2}.$        A similar analysis  is applied to  $B_{2,2,2}.$

Before  carry on,  we shall remember that for any  $\epsilon>0$ and $b\geq 5\epsilon$ the function  $\varphi_{\epsilon,b}=\sqrt{\chi_{\epsilon, b}'}$
is smooth.

Besides,
\begin{equation}
\begin{split}
\varphi_{\epsilon/3,b+\epsilon}D_{x}^{1+\alpha}u&=D_{x}^{1+\alpha}(u\varphi_{\epsilon/3,b+\epsilon})-\left[D_{x}^{1+\alpha};\varphi_{\epsilon/3,b+\epsilon}\right]u,
\end{split}
\end{equation}
then a combination of Lemma \ref{kdvlem1} and interpolation produce
\begin{equation}
\begin{split}
\left\|\varphi_{\epsilon/3,b+\epsilon}D_{x}^{1+\alpha}u\right\|_{2}&\lesssim \left\|D_{x}^{1+\alpha}(u\varphi_{\epsilon/3,b+\epsilon})\right\|_{2}+ \|\varphi_{\epsilon/3,b+\epsilon}\|_{l,2}\|u\|_{\alpha,2}\\
&\lesssim\left\|\partial_{x}^{2}\left(u\varphi_{\epsilon/3,b+\epsilon}\right)\right\|_{2}+\|\varphi_{\epsilon/3,b+\epsilon}\|_{l,2}\|u\|_{\alpha,2}\\
&\lesssim \left\|\partial_{x}^{2}u\varphi_{\epsilon/3,b+\epsilon}\right\|_{2}+\|u\|_{s_{\alpha},2}+\|\varphi_{\epsilon/3,b+\epsilon}\|_{l,2}\|u\|_{\alpha,2}.
\end{split}
\end{equation}
Thus, 
 \begin{equation*}
 \begin{split}
\int_{0}^{T}|B_{2,2,2}(t)|\,\mathrm{d}t&=c\left\|\varphi_{\epsilon/3,b+\epsilon}D_{x}^{1+\alpha}u\right\|_{L^{2}_{T}L^{2}_{x}}^{2}\\
&\lesssim \left\|\partial_{x}^{2}u\varphi_{\epsilon/3,b+\epsilon}\right\|_{L^{2}_{T}L^{2}_{x}}^{2}+\|u\|_{L^{\infty}_{T}H^{s_{\alpha}}_{x}}^{2}\\
&\leq c\left(\|u_{0}\|_{s_{\alpha},2};\epsilon;v;T\right),
 \end{split}
 \end{equation*}
 where the last inequality is a consequence of the local theory, interpolation and \eqref{kdv6}.
 
Similarly, 
  \begin{equation*}
 \begin{split}
 \int_{0}^{T}|B_{2,3,2}(t)|\,\mathrm{d}t&\lesssim  \int_{0}^{T}\int_{\mathbb{R}} \left(\mathcal{H}D_{x}^{1+\alpha}u\right)^{2}\chi_{\epsilon/3,b+\epsilon}'\mathrm{d}x\,\mathrm{d}t\\
 & \leq c\left(\|u_{0}\|_{s_{\alpha},2};\epsilon;v;T\right).
 \end{split}
 \end{equation*}
  After integrate in  time 
 \begin{equation*}
 \int_{0}^{T}|B_{2,2+l,3}(t)|\,\mathrm{d}t<\infty\quad \mbox{for}\quad l\in \{0,1\}.
 \end{equation*}
\S.3  Finally,  we  deal  with  the term $B_{3},$ which   corresponds to the nonlinear part of the equation  in \eqref{e1}.

First, we decompose the nonlinearity as follows   
 \begin{equation*}
\begin{split}
\partial_{x}^{2}D_{x}^{\frac{\alpha}{2}}\left(u\partial_{x}u\right)\chi_{\epsilon,b }&=-D_{x}^{2+\frac{\alpha }{2}}(u\partial_{x}u)\chi_{\epsilon,b }\\
&=\frac{1}{2}\left[D_{x}^{2+\frac{\alpha }{2}};\chi_{\epsilon, b}\right]\partial_{x}\left((u\chi_{\epsilon, b})^{2}+(u\widetilde{\phi_{\epsilon, b}})^{2}+u^{2}\psi_{\epsilon}\right)\\
&\quad - \left[D_{x}^{2+\frac{\alpha}{2}};u\chi_{\epsilon, b}\right]\partial_{x}\left(u\chi_{\epsilon, b}+u\phi_{\epsilon, b}+u\psi_{\epsilon}\right)+u\chi_{\epsilon, b}\partial_{x}^{2}D_{x}^{\frac{\alpha }{2}}\partial_{x}u\\
&=\widetilde{B_{3,1}}(t)+\widetilde{B_{3,2}}(t)+\widetilde{B_{3,3}}(t)+\widetilde{B_{3,4}}(t)+\widetilde{B_{3,5}}(t)+\widetilde{B_{3,6}}(t)+\widetilde{B_{3,7}}(t).
\end{split}
\end{equation*}
To estimate $B_{3}$ is sufficient  the $L^{2}(\mathbb{R})-$norm of the terms $\widetilde{B_{3,l}}$ for $l=1,2,\dots,7.$ 

 Combining  Lemma \ref{lema1}    and Lemma \ref{kdvlem1} it is obtained
\begin{equation}
\begin{split}
\left\|\widetilde{B_{3,1}}\right\|_{2}&\lesssim \|\partial_{x}\chi_{\epsilon, b}\|_{l,2}\left\|\left(u\chi_{\epsilon, b}\right)^{2}\right\|_{2+\alpha/2,2}\\
&\lesssim \left\|\left(
u\chi_{\epsilon, b}\right)^{2}\right\|_{2}+\left\|D_{x}^{2+\frac{\alpha}{2}}\left(\left(u\chi_{\epsilon, b}\right)^{2}\right)\right\|_{2}\\
&\lesssim \|u\|_{\infty}\left(\|u_{0}\|_{2}+\left\|D_{x}^{2+\frac{\alpha}{2}}\left(u\chi_{\epsilon, b}\right)\right\|_{2} \right)
\end{split}
\end{equation}
and 
\begin{equation}
\begin{split}
\|\widetilde{B_{3,2}}\|_{2}&\lesssim \|\partial_{x}\chi_{\epsilon, b}\|_{l,2}\left\|\left(u\phi_{\epsilon, b}\right)^{2}\right\|_{2+\alpha/2,2}\\
&\lesssim \|u\|_{\infty}\left(\|u_{0}\|_{2}+\left\|D_{x}^{2+\frac{\alpha}{2}}\left(u\phi_{\epsilon, b}\right)\right\|_{2} \right).
\end{split}
\end{equation}
Since the weighted functions   $\chi_{\epsilon, b}$ and $\psi_{\epsilon}$  satisfy hypothesis of Lemma \ref{lemma1}, 
then 
\begin{equation*}
\begin{split}
\|\widetilde{B_{3,3}}\|_{2}&=\left\|\chi_{\epsilon, b}D_{x}^{2+\frac{\alpha }{2}}(u^{2}\psi_{\epsilon})\right\|_{2}\lesssim \|u\|_{\infty}\|u_{0}\|_{2},
\end{split}
\end{equation*}
and 
\begin{equation*}
\begin{split}
\|\widetilde{B_{3,6}}\|_{2}&=\left\|u\chi_{\epsilon, b}D_{x}^{2+\frac{\alpha }{2}}(u\psi_{\epsilon})\right\|_{2}\lesssim\|u_{0}\|_{2}\|u\|_{\infty}.
\end{split}
\end{equation*}
To  handle the terms $\widetilde{B_{3,4}}$ and $\widetilde{B_{3,1}},$ we use  the commutator estimate \eqref{kpdl} to yield
\begin{equation}
\begin{split}
\|\widetilde{B_{3,4}}\|_{2}\lesssim \left\|D_{x}^{2+\frac{\alpha }{2}}(u\chi_{\epsilon, b})\right\|_{2}\left\|\partial_{x}(u\chi_{\epsilon, b})\right\|_{\infty}\\
\end{split}
\end{equation}
and 
\begin{equation}
\|\widetilde{B_{3,5}}\|_{2}\lesssim \left\|D_{x}^{2+\frac{\alpha }{2}}(u\phi_{\epsilon, b})\right\|_{2}\left\|\partial_{x}(u\chi_{\epsilon, b})\right\|_{\infty}+\left\|D_{x}^{2+\frac{\alpha }{2}}(u\chi_{\epsilon,b })\right\|_{2}\left\|\partial_{x}(u\phi_{\epsilon, b})\right\|_{\infty}.
\end{equation}
The nonlocal character of the operator $D_{x}^{s},$ for  $s\notin 2\mathbb{N},$ implies that several terms above shall be estimated. First, an immediate  application of   Theorem \ref{thm11} yield
\begin{equation*}
\begin{split}
\left\|D_{x}^{2+\frac{\alpha}{2}}\left(u\phi_{\epsilon, b}
\right)\right\|_{2}
&\lesssim  \left\|D_{x}^{2+\frac{\alpha}{2}}\phi_{\epsilon, b}\right\|_{\mathrm{BMO}}\|u\|_{2}+\left\|\phi_{\epsilon, b} D_{x}^{2+\frac{\alpha}{2}}u \right\|_{2}+\left\|\partial_{x}\phi_{\epsilon, b}\mathcal{H}D_{x}^{1+\frac{\alpha}{2}}u\right\|_{2}\\
&\quad+\left\|\partial_{x}^{2}\phi_{\epsilon, b}D_{x}^{\frac{\alpha}{2}}u\right\|_{2}\\
&\lesssim \left\|D_{x}^{\frac{\alpha}{2}}\partial_{x}^{2}\phi_{\epsilon, b}\right\|_{\infty}\|u_{0}\|_{2}+\left\|
\chi_{\epsilon/8,b+\epsilon/4}'D_{x}^{2+\frac{\alpha}{2}}u\right\|_{2}+\|u\|_{s_{\alpha},2}\\
&\lesssim \left\|
\chi_{\epsilon/8,b+\epsilon/4}'D_{x}^{2+\frac{\alpha}{2}}u\right\|_{2}+\|u\|_{s_{\alpha},2}.
\end{split}
\end{equation*}
Notice that the second term on the right hand side is bounded by local theory. By using  the weighted functions  properties combined with  \eqref{eqfkdv1} it follows
\begin{equation}
\begin{split}
\int_{0}^{T}\left\|D_{x}^{2+\frac{\alpha}{2}}u \chi_{\epsilon/8,b+\epsilon/4}\right\|_{2}\mathrm{d}t&\leq T^{1/2}\left\|D_{x}^{2+\frac{\alpha}{2}}u\chi_{\epsilon/8,b+\epsilon/4}'\right\|_{L^{2}_{T}L^{2}_{x}}\\
&\lesssim T^{1/2}\left\|D_{x}^{2+\frac{\alpha}{2}}u\eta_{\epsilon/24,b+7\epsilon/24}\right\|_{L^{2}_{T}L^{2}_{x}}\\
&\lesssim \left(c^{*}_{2,1}\right)^{1/2}.
\end{split}
\end{equation}
Similarly
\begin{equation}
\left\|D_{x}^{2+\frac{\alpha}{2}}\left(u\widetilde{\phi_{\epsilon,b}}\right)\right\|_{L^{1}_{T}L^{2}_{x}}\lesssim \left(c^{*}_{2,1}\right)^{1/2}.
\end{equation} 
Finally, to estimate 
$
\left\|D_{x}^{2+\frac{\alpha}{2}}(u\chi_{\epsilon, b})\right\|_{2},$
	we write  
\begin{equation}
\begin{split}
D_{x}^{2+\frac{\alpha}{2}}(u\chi_{\epsilon, b})&=-\chi_{\epsilon, b}\partial_{x}^{2}D_{x}^{\frac{\alpha}{2}}u+\left[D_{x}^{2+\frac{\alpha}{2}};\chi_{\epsilon, b}\right]\left(u\chi_{\epsilon, b}+u\phi_{\epsilon,b}+u\psi_{\epsilon}\right).
\end{split}
\end{equation}
Thus, a  combination of  Lemma \ref{kdvlem1} and interpolation lead us to 
\begin{equation} 
\left\|D_{x}^{2+\frac{\alpha}{2}}(u\chi_{\epsilon,b})\right\|_{2}\lesssim \left\|\chi_{\epsilon, b}\partial_{x}^{2}D_{x}^{\frac{\alpha}{2}}u\right\|_{2}+\|u\|_{s_{\alpha},2}.
\end{equation}
Notice that the first term on the right hand side is the quantity to be estimated by Gronwall's inequality.

Concerning to  $B_{3,7}$ we obtain after apply integration by parts  
\begin{equation*}
\begin{split}
B_{3,7}(t)&= -\frac{1}{2}\int_{\mathbb{R}}\partial_{x}u\chi_{\epsilon,b }^{2}\left(\partial_{x}^{2}D_{x}^{\frac{\alpha }{2}}u\right)^{2}\,\mathrm{d}x-\frac{1}{2}\int_{\mathbb{R}}u\left(\chi_{\epsilon,b }^{2}\right)'\left(\partial_{x}^{2}D_{x}^{\frac{\alpha }{2}}u\right)^{2}\mathrm{d}x\\
&=B_{3,7,1}(t)+B_{3,7,2}(t).
\end{split}
\end{equation*}
On one hand, we have 
\begin{equation*}
\begin{split}
|B_{3,7,1}(t)|&\lesssim \|\partial_{x}u(t)\|_{\infty}\int_{\mathbb{R}}\left(\partial_{x}^{2}D_{x}^{\frac{\alpha }
	{2}}u\right)^{2}\chi_{\epsilon, b}^{2}\mathrm{d}x,
\end{split}
\end{equation*}
 the integral expression on the right hand side  will be estimate by  Gronwall's inequality and  by   Theorem \ref{lt} we have      $\partial_{x}u \in L^{1}_{T}L^{\infty}_{x}.$

On the other hand,  by Sobolev's embedding it follows that  
\begin{equation*}
\begin{split}
|B_{3,7,2}(t)|&\lesssim \|u(t)\|_{\infty}\int_{\mathbb{R}}\left(\chi_{\epsilon, b}^{2}\right)'(D_{x}^{2+\frac{\alpha }{2}}u)^{2}\,\mathrm{d}x\\
&\lesssim \sup_{0\leq t\leq  T}\|u(t)\|_{s_{\alpha},2}\int_{\mathbb{R}}\left(\chi_{\epsilon, b}^{2}\right)'\left(\partial_{x}^{2}D_{x}^{\frac{\alpha}{2}}u\right)^{2}\,\mathrm{d}x.
\end{split}
\end{equation*}

Integrating in time  yields   
\begin{equation*}
 \int_{0}^{T}|B_{3,7,2}(t)|\,\mathrm{d}t\lesssim \sup_{0\leq t\leq  T}\|u(t)\|_{s_{\alpha},2}\int_{0}^{T}\int_{\mathbb{R}}\left(\chi_{\epsilon, b}^{2}\right)'\left(\partial_{x}^{2}D_{x}^{\frac{\alpha }{2}}u\right)^{2}\mathrm{d}x\,\mathrm{d}t,
  \end{equation*} 
  where the integral expression  corresponds to the $B_{1}$ term already estimated  in \eqref{eq1.4}.

Gathering the estimates corresponding to $B_{1},B_{2}$ and $B_{3}$ combined with   Gronwall's inequality  and integration in time yields that for any $\epsilon>0, b\geq 5\epsilon$ and $v\geq 0$
\begin{equation}
\begin{split}
&\sup_{0\leq t\leq T}\left\|\partial_{x}^{2}D_{x}^{\frac{\alpha }{2}}u\chi_{\epsilon, b}(\cdot+vt)\right\|_{2}^{2}+\left\|D_{x}^{2+\alpha}u\eta_{\epsilon, b}\right\|_{L^{2}_{T}L^{2}_{x}}^{2}+\left\|\mathcal{H}D_{x}^{2+\alpha}u\eta_{\epsilon, b}\right\|_{L^{2}_{T}L^{2}_{x}}^{2}\leq c^{*}_{2,2},
\end{split}
\end{equation}
where  ${\displaystyle c^{*}_{2,2}=c^{*}_{2,2}\left(\alpha; \epsilon; T;v; \|u_{0}\|_{s_{\alpha},2}; \left\|D_{x}^{\frac{\alpha}{2}}\partial_{x}^{2}u_{0}\chi_{\epsilon, b}\right\|_{2}\right)>0.}$ 

At this point, we shall determine the number of steps in the second inductive process that allow us to reach the next integer. For this reason we will consider the following cases:


\begin{itemize}
	\item[(a)] If $\frac{2}{2k+1}\leq\alpha<\frac{1}{k}$ for some positive integer $k,$ then are required $2k+1$ steps in the second inductive process.
	\item[(b)] Instead, if $ \frac{1}{k+1}\leq \alpha<\frac{2}{2k+1}$ for some positive integer $k,$ then  are required $2k+2$ steps in the second inductive process.
\end{itemize}
A detailed description of the number of steps can be obtained directly from (a)-(b).More precisely, there are required $\ceil{\frac{2}{\alpha}}$ steps in both cases, where $\ceil{\cdot}$ denotes the ceiling function.  
 The different considerations are made to differentiate when the number of steps   are even or odd.  
 A graphical description of the process described here is presented below. 

Henceforth, for comfort in the notation  we will consider $\alpha$ satisfying the condition (a).
 
As part of the  second inductive process  we shall assume that 
\begin{equation}\label{eq6}
\begin{split}
&\sup_{0\leq t\leq  T}\left\|\partial_{x}^{2}D_{x}^{\frac{\alpha j}{2}}u\chi_{\epsilon, b}(\cdot+vt)\right\|_{2}^{2}+\left\|D_{x}^{2+\alpha\left(\frac{j+1}{2}\right)}u\eta_{\epsilon, b}\right\|_{L^{2}_{T}L^{2}_{x}}^{2}+\left\|\mathcal{H}D_{x}^{2+\alpha\left(\frac{j+1}{2}\right)}u\eta_{\epsilon, b}\right\|_{L^{2}_{T}L^{2}_{x}}^{2}\leq c^{*}_{2,j+1}
\end{split}
\end{equation}
for every $\epsilon>0,b\geq 5\epsilon,\, v\geq 0,$
 and $j=0,1,\cdots, 2k-1.$ 
 
 In fact, the previous cases in the induction process are summarized in the following diagram:
  \begin{center}
 	{\small
 		\begin{tikzcd}
 		\partial_{x}^{2}u\chi_{\epsilon, b}^{2} \arrow[r, blue] \arrow[d,"D_{x}^{\alpha/2}"] & D_{x}^{\alpha/2}\partial_{x}^{2}u(\chi_{\epsilon, b}^{2})' \arrow[dl,dashrightarrow, red]\\
 		D_{x}^{\alpha/2}\partial_{x}^{2}u\chi_{\epsilon, b}^{2} \arrow[r, blue]\arrow[d,"D_{x}^{\alpha/2}"]& D_{x}^{\alpha}\partial_{x}^{2}u(\chi_{\epsilon, b}^{2})'\arrow[dl,dashrightarrow, red]\\
 		D_{x}^{\alpha}\partial_{x}^{2}u\chi_{\epsilon, b}^{2} \arrow[r, blue]\arrow[d,"D_{x}^{\alpha/2}"]& D_{x}^{3\alpha/2}\partial_{x}^{2}u(\chi_{\epsilon, b}^{2})'  \arrow[dl,dashrightarrow, red]\\
 		D_{x}^{3\alpha/2}\partial_{x}^{2}u\chi_{\epsilon, b}^{2}\arrow[d,"D_{x}^{\alpha/2}"]\arrow[r, blue]& D_{x}^{2\alpha}\partial_{x}^{2}u(\chi_{\epsilon, b}^{2})'\arrow[dl,dashrightarrow, red]  \\
 		\vdots\arrow[d,"D_{x}^{\alpha/2}"] & \vdots\\
 		D_{x}^{(2k-1)\alpha/2}\partial_{x}^{2}u\chi_{\epsilon, b}^{2}\arrow[r, blue]\arrow[d,"D_{x}^{1-\alpha/2}"]& D_{x}^{\alpha k}\partial_{x}^{2}u(\chi_{\epsilon, b}^{2})'\arrow[dl,dashrightarrow, red]\\
 		D_{x}^{1-\alpha/2}\partial_{x}^{2}u\chi_{\epsilon, b}^{2}\arrow[r, blue]& \partial_{x}^{3}u(\chi_{\epsilon, b}^{2})'\\
 		\end{tikzcd}}
 \end{center}
 The last before the last  case in the diagram is the one that we will carry out in the next step, we will present all the issues corresponding to this case. 
	\begin{flushleft}
		\fbox{{\sc Step $2k+1$:}}
	\end{flushleft}
A standard argument lead us to the energy identity
\begin{equation}\label{kdv8}
\begin{split}
&\frac{1}{2}\frac{\mathrm{d}}{\mathrm{d}t}\int_{\mathbb{R}}\left(\partial_{x}^{2}D_{x}^{1-\frac{\alpha}{2}}u\right)^{2}\chi_{\epsilon, b}^{2}\mathrm{d}x\underbrace{-\frac{v}{2}\int_{\mathbb{R}}\left(\partial_{x}^{2}D_{x}^{1-\frac{\alpha}{2}}u\right)^{2}(\chi_{\epsilon, b}^{2})'\,\mathrm{d}x}_{B_{1}(t)}\\
&\underbrace{-\int_{\mathbb{R}}\left(\partial_{x}^{2}D_{x}^{1-\frac{\alpha}{2}}D_{x}^{\alpha}\partial_{x}u\right)\partial_{x}^{2}D_{x}^{1-\frac{\alpha}{2}}u\chi_{\epsilon, b}^{2}\,\mathrm{d}x}_{B_{2}(t)}\\
&+\underbrace{\int_{\mathbb{R}} \left(\partial_{x}^{2}D_{x}^{1-\frac{\alpha}{2}}\left(u\partial_{x}u\right)\right)\partial_{x}^{2}D_{x}^{1-\frac{\alpha}{2}}u\chi_{\epsilon, b}^{2}\,\mathrm{d}x}_{B_{3}(t)}=0.
\end{split}
\end{equation}
Notice that estimating  the term $\|B_{1}\|_{2}$  is straightforward in the previous cases, because it is a direct consequence   from the former cases. However, this  cannot be done in this step, instead a new approach is required.

Since the smoothing effect obtained in the case $j=2k-1$  is $2+ \alpha k,$ then by hypothesis
\begin{equation}\label{eq9}
2+\alpha k\geq 3-\frac{\alpha}{2}\quad\mbox{for}\quad  \alpha\in \left[\frac{2}{2k+1},\frac{1}{k}\right).
\end{equation} 
Therefore, the regularity obtained in  \eqref{eq6} is enough to provide a bound for $\|B_{1}\|_{1}.$
To  do this, we first decompose  the term $\partial_{x}^{2}D_{x}^{\alpha k}u$ as follows:
\begin{equation}\label{kdv7}
\begin{split}
\partial_{x}^{2}D_{x}^{\alpha k}\left(u\eta_{\epsilon,b}\right)&=\eta_{\epsilon,b}\partial_{x}^{2}D_{x}^{\alpha k}u-\left[D_{x}^{2+\alpha k};\eta_{\epsilon,b}\right]\left(u\chi_{\epsilon, b}+u\phi_{\epsilon, b}+u\psi_{\epsilon}\right).
\end{split}
\end{equation}
Then, by Lemma \ref{kdvlem1}
\begin{equation}\label{eq10}
\begin{split}
\left\|\partial_{x}^{2}D_{x}^{\alpha k}\left(u\eta_{\epsilon,b}\right)\right\|_{2}&\leq \left\|\partial_{x}^{2}D_{x}^{\alpha k}u\eta_{\epsilon,b}\right\|_{2}+\left\|\left[D_{x}^{2+\alpha k};\eta_{\epsilon,b}\right]\left(u\chi_{\epsilon, b}+u\phi_{\epsilon, b}+u\psi_{\epsilon}\right)\right\|_{2}\\
&\lesssim \left\|\partial_{x}^{2}D_{x}^{\alpha k}u\eta_{\epsilon,b}\right\|_{2}+ \|\eta_{\epsilon,b}\|_{l,2}\left(\left\|u\chi_{\epsilon, b}\right\|_{1+\alpha k, 2}+ \left\|u\phi_{\epsilon, b}\right\|_{1+\alpha k,2}\right)\\
&\quad +\left\|\eta_{\epsilon,b}D_{x}^{2+\alpha k}(u\psi_{\epsilon})\right\|_{2}\\
&=I_{1}+I_{2}+I_{3}+I_{4}.
\end{split}
\end{equation}
Notice that after integrate in time, the  inequality \eqref{eq6} implies that
\begin{equation*}
\|I_{1}\|_{L^{2}_{T}}=\left(\int_{0}^{T}\int_{\mathbb{R}}\left(D_{x}^{2+\alpha k}u\right)^{2}\eta_{\epsilon,b}^{2}\,\mathrm{d}x\,\mathrm{d}t\right)^{1/2}\lesssim (c^{*}_{2,j+1})^{1/2}.
\end{equation*}
The terms $I_{2}$ and $I_{3}$  can be bounded   via  Lemma \ref{lema2} and Young's inequality 
\begin{equation*}
\begin{split}
\left\|D_{x}^{1+\alpha k}(u\chi_{\epsilon, b})\right\|_{2}&\lesssim \left\|\partial_{x}^{2}(u\chi_{\epsilon, b})\right\|_{2}+\|u_{0}\|_{2}\\
&\lesssim \left\|\partial_{x}^{2}u\chi_{\epsilon, b}\right\|_{2}+\|u\|_{s_{\alpha},2},
\end{split}
\end{equation*}
and 
\begin{equation*}
\begin{split}
\left\|D_{x}^{1+\alpha k}(u\phi_{\epsilon, b})\right\|_{2}&\lesssim \left\|\partial_{x}^{2}(u\phi_{\epsilon, b})\right\|_{2}+\|u_{0}\|_{2}\\
&\lesssim \left\|\partial_{x}^{2}u\phi_{\epsilon, b}\right\|_{2}+\|u\|_{s_{\alpha},2}.
\end{split}
\end{equation*}
Moreover,  
 \begin{equation*}
\|I_{2}\|_{L^{2}_{T}}=\left\|D_{x}^{1+\alpha k}(u\chi_{\epsilon, b})\right\|_{L^{2}_{T}L^{2}_{x}}\lesssim  \sup_{0\leq t \leq T}\left\|\partial_{x}^{2}u\chi_{\epsilon, b}\right\|_{2}+\|u\|_{L^{\infty}_{T}H^{s_{\alpha}}_{x}}.
\end{equation*}
  On the other hand,  there exists $\tau>0$ such that
\begin{equation*}
\phi_{\epsilon, b}(x+vt)\lesssim \mathbb{1}_{[-\tau, \tau]}(x)\qquad \mbox{for all}\quad (x,t)\in \mathbb{R}\times [0,T],
\end{equation*}
Thus, in view of Remark \ref{kse}
\begin{equation*}
\begin{split}
\left\|\partial_{x}^{2}u\phi_{\epsilon, b}\right\|_{L^{2}_{T}L^{2}_{x}}&\lesssim \left(\int_{0}^{T}\int_{\mathbb{R}} \left(\partial_{x}^{2}u\right)^{2}\phi_{\epsilon, b}^{2}\mathrm{d}x\,\mathrm{d}t\right)^{1/2} \\
&\lesssim\left(\int_{0}^{T}\int_{-\tau
}^{\tau} \left(\partial_{x}^{2}u\right)^{2}\mathrm{d}x\,\mathrm{d}t\right)^{1/2}\\
&\leq c\left(\|u_{0}\|_{s_{\alpha},2};\tau\right).
\end{split}
\end{equation*}
Next, by  Lemma \ref{lemma1}, it follows that 
\begin{equation}\label{eq11}
\left\|\eta_{\epsilon,b}D_{x}^{2+\alpha k}(u\psi_{\epsilon})\right\|_{2}\lesssim \|u_{0}\|_{2}.
\end{equation}
Gathering together the estimates above lead us to  
\begin{equation*}
\left\|D_{x}^{2+\alpha k}(u\eta_{\epsilon,b})\right\|_{L^{2}_{T}L^{2}_{x}}<\infty.
\end{equation*}
This inequality combined with    Lemma \ref{lema2} implies that 
\begin{equation}\label{eq7}
	\left \|D_{x}^{3-\frac{\alpha}{2}}\left(u\eta_{\epsilon,b}\right)\right\|_{2}\lesssim \left\|D_{x}^{2+\alpha k}(u\eta_{\epsilon,b})\right\|_{2}^{\frac{6-\alpha}{2(2+\alpha k)}}\|u\eta_{\epsilon,b}\|_{2}^{\frac{\alpha(1+2k)-2}{2(2+\alpha k)}}.
	\end{equation} 

Since we need to estimate $\partial_{x}^{2}D_{x}^{1-\frac{\alpha}{2}}u\eta_{\epsilon,b}, $ a decomposition as that in \eqref{kdv7} is sufficient to our proposes, i.e. 
\begin{equation*}
\partial_{x}^{2}D_{x}^{1-\frac{\alpha}{2}}u\eta_{\epsilon,b}=\partial_{x}^{2}D_{x}^{1-\frac{\alpha}{2}}(u\eta_{\epsilon,b})-\left[\partial_{x}^{2}D_{x}^{1-\frac{\alpha}{2}}; \eta_{\epsilon,b}\right]\left(u\chi_{\epsilon, b}+u\phi_{\epsilon, b}+u\psi_{\epsilon}\right).
\end{equation*}
For the sake of brevity, we omit the computations  behind these commutators estimates. Nevertheless,  similar arguments as those used in  \eqref{eq10}-\eqref{eq11} lead to 
\begin{equation*}
\left\|\partial_{x}^{2}D_{x}^{1-\frac{\alpha}{2}}u\eta_{\epsilon,b}\right\|_{L^{2}_{T}L^{2}_{x}}<\infty.
\end{equation*}
 The estimates above are in   fact a way to  bound the term $\|B_{1}\|_{1}$ in \eqref{kdv8}.  
 
 More precisely,
\begin{equation*}
\begin{split}
\int_{0}^{T}|B_{1}(t)|\,\mathrm{d}t=v\left\|\partial_{x}^{2}D_{x}^{1-\frac{\alpha}{2}}u\eta_{\epsilon,b}\right\|_{L^{2}_{T}L^{2}_{x}}^{2}<\infty.
\end{split}
\end{equation*}
\S.2\quad A previous argument (see step 1) implies that  
\begin{equation}\label{kdv3}
\begin{split}
B_{2}(t)&=\frac{1}{2}\int_{\mathbb{R}}D_{x}^{3-\frac{\alpha}{2}}uR_{n}(\alpha+1)D_{x}^{3-\frac{\alpha}{2}}u\,\mathrm{d}x+\frac{1}{4}\int_{\mathbb{R}}D_{x}^{3-\frac{\alpha}{2}}uP_{n}(\alpha+1)D_{x}^{3-\frac{\alpha}{2}}u\,\mathrm{d}x\\
&\quad -\frac{1}{4}\int_{\mathbb{R}}D_{x}^{3-\frac{\alpha}{2}}u\mathcal{H}P_{n}(\alpha+1)\mathcal{H}D_{x}^{3-\frac{\alpha}{2}}u\,\mathrm{d}x\\
&=B_{2,1}(t)+B_{2,2}(t)+B_{2,3}(t).
\end{split}
\end{equation}
A similar argument to the used in step 1 allow us  to fix $n=2$ above. 

According to Proposition \ref{propo2}, the remainder term $R_{2}(\alpha+1)$ is a bounded operator in $L^{2}(\mathbb{R}).$ Therefore 
\begin{equation*}
\begin{split}
B_{2,1}(t)&=\frac{1}{2}\int_{\mathbb{R}}uD_{x}^{3-\frac{\alpha}{2}}R_{2}(\alpha+1)D_{x}^{3-\frac{\alpha}{2}}u\,\mathrm{d}x,
\end{split}
\end{equation*}
and by  H\"{o}lder's inequality  
\begin{equation*}
\begin{split}
|B_{2,1}(t)|\lesssim \|u_{0}\|_{2}\left\|D_{x}^{3-\frac{\alpha}{2}}R_{2}(\alpha+1)D_{x}^{3-\frac{\alpha}{2}}u\right\|_{2}\lesssim \|u_{0}\|_{2}^{2}\left\|\widehat{D_{x}^{7}\left(\chi_{\epsilon, b}^{2}\right)}\right\|_{1}.
\end{split}
\end{equation*}
Thus,
\begin{equation*}
\int_{0}^{T}|B_{2,1}(t)|\,\mathrm{d}t\lesssim T\|u_{0}\|_{2}^{2}\left\|\widehat{D_{x}^{7}\left(\chi_{\epsilon, b}^{2}\right)}\right\|_{1}.
\end{equation*}
If we decompose the terms in \eqref{kdv3} it follows that  
\begin{equation*}
\begin{split}
B_{2,2}(t)&=\left(\frac{\alpha+1}{4}\right)\int_{\mathbb{R}}\left(\mathcal{H}\partial_{x}^{3}u\right)^{2}\left(\chi_{\epsilon, b}^{2}\right)'\mathrm{d}x-c_{3}\left(\frac{\alpha+1}{16}\right)\int_{\mathbb{R}} \left(\partial_{x}^{2}u\right)^{2}\left(\chi_{\epsilon, b}^{2}\right)^{(3)}\,\mathrm{d}x\\
&\quad +c_{5}\left(\frac{\alpha+1}{64}\right)\int_{\mathbb{R}}\left(\mathcal{H}\partial_{x}u\right)^{2}\left(\chi_{\epsilon, b}^{2}\right)^{(5)}\,\mathrm{d}x\\
&=B_{2,2,1}(t)+B_{2,2,2}(t)+B_{2,2,3}(t),
\end{split}
\end{equation*}
and
\begin{equation*}
\begin{split}
B_{2,3}(t)&=\left(\frac{\alpha+1}{4}\right)\int_{\mathbb{R}}\left(\partial_{x}^{3}u\right)^{2}\left(\chi_{\epsilon, b}^{2}\right)'\mathrm{d}x-c_{3}\left(\frac{\alpha+1}{16}\right)\int_{\mathbb{R}} \left(\mathcal{H}\partial_{x}^{2}u\right)^{2}\left(\chi_{\epsilon, b}^{2}\right)^{(3)}\,\mathrm{d}x\\
&\quad +c_{5}\left(\frac{\alpha+1}{64}\right)\int_{\mathbb{R}}\left(\partial_{x}u\right)^{2}\left(\chi_{\epsilon, b}^{2}\right)^{(5)}\,\mathrm{d}x\\
&=B_{2,3,1}(t)+B_{2,3,2}(t)+B_{2,3,3}(t).
\end{split}
\end{equation*}
As before $B_{2,2,1}$ and $B_{2,3,1}$  provide  the smoothing effect after integration in time.

The terms $B_{2,2,2}$ and $B_{2,3,2}$  are easily handled by using Proposition \ref{kse} and  the arguments used in \eqref{kdv5}-\eqref{kdv6}. Meanwhile,   the low regularity in the terms $B_{2,2,3}$ and $B_{2,3,3}$ is handled by using local theory.

\S.3 \quad To finish this step, we turn our attention to the term involving the nonlinear part of the equation 
\begin{equation*}
\begin{split}
\partial_{x}^{2}D_{x}^{1-\frac{\alpha}{2}}\left(u\partial_{x}u\right)\chi_{\epsilon,b }&=-D_{x}^{3-\frac{\alpha }{2}}(u\partial_{x}u)\chi_{\epsilon,b }\\
&=\frac{1}{2}\left[D_{x}^{3-\frac{\alpha }{2}};\chi_{\epsilon, b}\right]\partial_{x}\left((u\chi_{\epsilon, b})^{2}+(u\widetilde{\phi_{\epsilon, b}})^{2}+u^{2}\psi_{\epsilon}\right)\\
&\quad - \left[D_{x}^{3-\frac{\alpha}{2}};u\chi_{\epsilon, b}\right]\partial_{x}\left(u\chi_{\epsilon, b}+u\phi_{\epsilon, b}+u\psi_{\epsilon}\right)+u\chi_{\epsilon, b}\partial_{x}^{2}D_{x}^{1-\frac{\alpha }{2}}\partial_{x}u\\
&=\widetilde{B_{3,1}}(t)+\widetilde{B_{3,2}}(t)+\widetilde{B_{3,3}}(t)+\widetilde{B_{3,4}}(t)+\widetilde{B_{3,5}}(t)+\widetilde{B_{3,6}}(t)+\widetilde{B_{3,7}}(t).
\end{split}
\end{equation*} 
In the decomposition above is sufficient to  estimate the $L^{2}_{x}-$ norm of the terms $\widetilde{B_{3,l}}$ for $l=1,2,\cdots,6.$ 

Combining  \eqref{eq5}   and Lemma \ref{kdvlem1} it follows that 
\begin{equation*}
\begin{split}
\|\widetilde{B_{3,1}}\|_{2}&\lesssim \left\|\chi_{\epsilon,b}'\right\|_{l,2}\left\|\left(u\chi_{\epsilon, b}\right)^{2}\right\|_{3-\frac{\alpha}{2},2}\\
&\lesssim \left\|u^{2}\right\|_{2}+\left\|D_{x}^{3-\frac{\alpha }{2}}\left(\left(u\chi_{\epsilon, b}\right)^{2}\right)\right\|_{2}\\
&\lesssim  \|u\|_{\infty}\left(\|u_{0}\|_{2}+\left\|D_{x}^{3-\frac{\alpha}{2}}(u\chi_{\epsilon, b})\right\|_{2}\right)
\end{split}
\end{equation*}
and 
\begin{equation*}
\begin{split}
\|\widetilde{B_{3,2}}\|_{2}&
\lesssim  \|u\|_{\infty}\left(\|u_{0}\|_{2}+\left\|D_{x}^{3-\frac{\alpha}{2}}(u\phi_{\epsilon, b})\right\|_{2}\right).
\end{split}
\end{equation*}
Since   $\chi_{\epsilon, b}$ and $\psi_{\epsilon}$ satisfy
\begin{equation*}
\dist\left(\supp(\chi_{\epsilon, b}),\supp(\psi_{\epsilon})\right)\geq \frac{\epsilon}{2}>0,
\end{equation*}
then  by  Lemma \ref{lemma1}, it follows that  
\begin{equation*}
\begin{split}
\|\widetilde{B_{3,3}}\|_{2}&=\left\|\chi_{\epsilon, b}D_{x}^{3-\frac{\alpha }{2}}(u^{2}\psi_{\epsilon})\right\|_{2}\lesssim \|u\|_{\infty}\|u_{0}\|_{2},
\end{split}
\end{equation*}
and 
\begin{equation*}
\begin{split}
\|\widetilde{B_{3,6}}\|_{2}&=\left\|u\chi_{\epsilon, b}D_{x}^{3-\frac{\alpha }{2}}(u\psi_{\epsilon})\right\|_{2}\lesssim\|u_{0}\|_{2}\|u\|_{\infty}.
\end{split}
\end{equation*}
Instead, the terms $\widetilde{B_{3,4}}$ and $\widetilde{B_{3,1}}$ are handled by using  \eqref{kpdl}  
\begin{equation*}
\begin{split}
\|\widetilde{B_{3,4}}\|_{2}\lesssim \left\|D_{x}^{3-\frac{\alpha }{2}}(u\chi_{\epsilon, b})\right\|_{2}\left\|\partial_{x}(u\chi_{\epsilon, b})\right\|_{\infty}\\
\end{split}
\end{equation*}
and 
\begin{equation*}
\|\widetilde{B_{3,5}}\|_{2}\lesssim \left\|D_{x}^{3-\frac{\alpha }{2}}(u\phi_{\epsilon, b})\right\|_{2}\left\|\partial_{x}(u\chi_{\epsilon, b})\right\|_{\infty}+\left\|D_{x}^{3-\frac{\alpha }{2}}(u\chi_{\epsilon,b })\right\|_{2}\left\|\partial_{x}(u\phi_{\epsilon, b})\right\|_{\infty}.
\end{equation*}

Notice that 
\begin{equation}
\begin{split}
\partial_{x}^{2}D_{x}^{1-\frac{\alpha}{2}}(u\chi_{\epsilon, b})&=\chi_{\epsilon, b}\partial_{x}^{2}D_{x}^{1-\frac{\alpha}{2}}u- \left[D_{x}^{3-\frac{\alpha}{2}};\chi_{\epsilon,b }\right]\left(u\chi_{\epsilon, b}+u\phi_{\epsilon,b}+u\psi_{\epsilon}\right),
\end{split}
\end{equation}
so that a combination of interpolation, local theory and Lemma \ref{kdvlem1} yield
\begin{equation*}
\begin{split}
\left\|\partial_{x}^{2}D_{x}^{1-\frac{\alpha}{2}}(u\chi_{\epsilon, b})\right\|_{2} 
&\lesssim \left\|\chi_{\epsilon,b }\partial_{x}^{2}D_{x}^{1-\frac{\alpha}{2}}u\right\|_{2}+\left\|\partial_{x}^{2}u\chi_{\epsilon,b}\right\|_{2}+\left\|\partial_{x}^{2}u\phi_{\epsilon,b}\right\|_{2}+\|u\|_{s_{\alpha},2}.
\end{split}
\end{equation*}
An immediate application of Theorem \ref{thm11} implies that
\begin{equation*}
\begin{split}
&\left\|D_{x}^{2+\alpha k}(u\phi_{\epsilon, b})\right\|_{2}\\
&\lesssim \left\|D_{x}^{2+\alpha k
}\phi_{\epsilon, b}\right\|_{\mathrm{BMO}}\|u\|_{2}+\left\|\phi_{\epsilon, b}D_{x}^{2+\alpha k}u\right\|_{2}+ \left\|\partial_{x}\phi_{\epsilon, b}\mathcal{H}D_{x}^{1+\alpha k}u\right\|_{2}+\left\|\partial_{x}^{2}\phi_{\epsilon, b}D_{x}^{\alpha k}u\right\|_{2}\\
&\lesssim \|u_{0}\|_{2}+\left\|\chi_{\epsilon/8,b+\epsilon/4}'D_{x}^{2+\alpha k}u\right\|_{2}+ \left\|\chi_{\epsilon/8,b+\epsilon/4}'\mathcal{H}D_{x}^{1+\alpha k}u\right\|_{2}+\left\|\chi_{\epsilon/8,b+\epsilon/4}'D_{x}^{\alpha k}u\right\|_{2}\\
&=A_{1}+A_{2}+A_{3}+A_{4}.
\end{split}
\end{equation*}
The first and fourth term on the right hand side above are bounded after integrate in time. The second and third term require extra arguments to  control them after integrate in time. 

Since  $\alpha$ satisfies   
\begin{equation}
\frac{2}{2k+1}\leq \alpha<\frac{1}{k}\qquad  \mbox{for}\quad k\in \mathbb{Z}^{+},
\end{equation}
 it is  clear that $\alpha k$ is positive and strictly less than one for all positive integer $k,$ then is straightforward to see that   $\|A_{4}\|_{1}<\infty.$  
 
 Interpolation combined with  Calderon's commutator estimate \eqref{eq31} implies that 
\begin{equation}
\begin{split}
A_{3}&=\left\|\partial_{x}\phi_{\epsilon, b}\mathcal{H}D_{x}^{1+\alpha k}u\right\|_{2}\\
&=\left\|\mathcal{H}\left(\partial_{x}\phi_{\epsilon,b}D_{x}^{1+\alpha k}u\right)
-\left[\mathcal{H};\partial_{x}\phi_{\epsilon, b}\right]D_{x}^{1+\alpha k}u\right\|_{2}\\
&\lesssim\left\|\partial_{x}\phi_{\epsilon,b}D_{x}^{1+\alpha k}u\right\|_{2}+\left\|u\right\|_{s_{\alpha},2},
\end{split}
\end{equation}
and 
\begin{equation}
\begin{split}
\left\|\partial_{x}\phi_{\epsilon,b}D_{x}^{1+\alpha k}u\right\|_{2}&\lesssim \left\|\partial_{x}^{2}\left(u\partial_{x}\phi_{\epsilon,b}\right)\right\|_{2}+\|u\|_{s_{\alpha,2}}\\
&\lesssim \left\|\partial_{x}^{2}u\chi_{\epsilon/8,b+\epsilon/4}'\right\|_{2}+\|u\|_{s_{\alpha,2}}.
\end{split}
\end{equation}
The last inequality above is obtained combining interpolation and the properties of the weighted functions.

Hence,
\begin{equation}
\|A_{3}\|_{1}\lesssim c\left(\|u_{0}\|_{s_{\alpha},2};\epsilon;T;v\right)+\|u\|_{L^{\infty}_{T}H^{s_{\alpha}}_{x}}.
\end{equation}
Next, the properties of weighted functions and \eqref{eq6} imply 
\begin{equation}
\|A_{2}\|_{1}\lesssim T^{1/2}\,\left(c^{*}_{2, 2k}\right)^{1/2}.
\end{equation} 
Applying similar arguments   allow us to  estimate  $\left\|D_{x}^{2+\alpha k}\left(
u\widetilde{\phi_{\epsilon, b}}\right)\right\|_{2}.$ 

Finally,  integration by parts give us 
\begin{equation*}
\begin{split}
B_{3,7}(t)&= -\frac{1}{2}\int_{\mathbb{R}}\partial_{x}u\chi_{\epsilon,b }^{2}\left(\partial_{x}^{2}D_{x}^{1-\frac{\alpha }{2}}u\right)^{2}\mathrm{d}x-\frac{1}{2}\int_{\mathbb{R}}u\left(\chi_{\epsilon,b }^{2}\right)'\left(\partial_{x}^{2}D_{x}^{1-\frac{\alpha }{2}}u\right)^{2}\mathrm{d}x\\
&=B_{3,7,1}(t)+B_{3,7,2}(t).
\end{split}
\end{equation*}
Observe that
\begin{equation*}
\begin{split}
|B_{3,7,1}(t)|&\lesssim \|\partial_{x}u(t)\|_{\infty}\int_{\mathbb{R}}\left(\partial_{x}^{2}D_{x}^{1-\frac{\alpha }
	{2}}u\right)^{2}\chi_{\epsilon, b}^{2}\mathrm{d}x,
\end{split}
\end{equation*}
By the local theory    $\partial_{x}u \in L^{1}_{T}L^{\infty}_{x},$ and 
the integral expression will be estimated by means of Gronwall's inequality.

By Sobolev's embedding it follows that  
\begin{equation*}
\begin{split}
|B_{3,7,2}(t)|&\lesssim \|u(t)\|_{\infty}\int_{\mathbb{R}}\left(\chi_{\epsilon, b}^{2}\right)'\left(\partial_{x}^{2}D_{x}^{1-\frac{\alpha }{2}}u\right)^{2}\,\mathrm{d}x\\
&\lesssim \left(\sup_{0\leq t\leq  T}\|u(t)\|_{s_{\alpha},2}\right)\int_{\mathbb{R}}\left(\chi_{\epsilon, b}^{2}\right)'\left(\partial_{x}^{2}D_{x}^{\frac{\alpha}{2}}u\right)^{2}\,\mathrm{d}x
\end{split}
\end{equation*}
and noticing  that after integrating  in time 
we obtain 
\begin{equation*}
\int_{0}^{T}|B_{3,7,2}(t)|\,\mathrm{d}t\lesssim \left(\sup_{0\leq t\leq  T}\|u(t)\|_{s_{\alpha},2}\right)\int_{0}^{T}\int_{\mathbb{R}}\left(\chi_{\epsilon, b}^{2}\right)'\left(\partial_{x}^{2}D_{x}^{\frac{\alpha }{2}}u\right)^{2}\mathrm{d}x\,\mathrm{d}t,
\end{equation*} 
where the integral expression  corresponds to the $B_{1}$ term,  which was already estimated in \eqref{eq1.4}.

 We conclude this  step gathering the  estimates corresponding to $B_{1},B_{2}$ and $B_{3}$ combined with   Gronwall's inequality  and integration in time  to obtain for any $\epsilon>0, b\geq 5\epsilon$ and $v\geq 0,$
\begin{equation}\label{se2}
\begin{split}
&\sup_{0\leq t\leq T}\left\|\partial_{x}^{2}D_{x}^{1-\frac{\alpha }{2}}u\chi_{\epsilon, b}(\cdot+vt)\right\|_{2}^{2}+\left\|\partial_{x}^{3}u\eta_{\epsilon, b}^{2}\right\|_{L^{2}_{T}L^{2}_{x}}^{2}+\left\|\mathcal{H}\partial_{x}^{3}u\eta_{\epsilon, b}\right\|_{L^{2}_{T}L^{2}_{x}}^{2}\leq c^{*}_{2,2k},
\end{split}
\end{equation}
where  ${\displaystyle c^{*}_{2,2k+1}=c^{*}_{2,2k+1}\left(\alpha; \epsilon; T;v; \|u_{0}\|_{s_{\alpha},2}; \left\|D_{x}^{1-\frac{\alpha}{2}}\partial_{x}^{2}u_{0}\chi_{\epsilon, b}\right\|_{2}\right)>0.}$

The inequality above finishes the  first induction argument.
	\begin{flushleft}
	\textbf{\underline{{\sc Case $m$}}}
\end{flushleft}

Next,  our argument will combine two induction process at the same time to reach our goal. 
 
 The first induction process consists in to  assume   that $m$ derivatives are propagated  to then prove that $m+1$ derivatives are propagated; this is summarized in the diagram below. 
\begin{center}
\begin{tikzcd}
\partial_{x}^{2}u\chi_{\epsilon, b} \arrow[r]
& \partial_{x}^{3}u\chi_{\epsilon, b} \arrow[r]
\arrow[d, phantom, ""{coordinate, name=Z}]
& \partial_{x}^{4}u\chi_{\epsilon, b} \arrow[dll,
"\partial_{x}",
rounded corners,
to path={ -- ([xshift=2ex]\tikztostart.east)
	|- (Z) [near end]\tikztonodes
	-| ([xshift=-2ex]\tikztotarget.west)
	-- (\tikztotarget)}] \\
\partial_{x}^{m-1}u\chi_{\epsilon, b} \arrow[r]
& \partial_{x}^{m}u\chi_{\epsilon, b} \arrow[r]
& \partial_{x}^{m+1}u\chi_{\epsilon, b}
\end{tikzcd}
\end{center}

More precisely, it is described in the following figure
 \begin{center}
	{\small
		\begin{tikzcd}
		\partial_{x}^{m}u\chi_{\epsilon, b}^{2} \arrow[r, blue] \arrow[d,"D_{x}^{\alpha/2}"] & D_{x}^{\alpha/2}\partial_{x}^{m}u(\chi_{\epsilon, b}^{2})' \arrow[dl,dashrightarrow, red]\\
		D_{x}^{\alpha/2}\partial_{x}^{m}u\chi_{\epsilon, b}^{2} \arrow[r, blue]\arrow[d,"D_{x}^{\alpha/2}"]& D_{x}^{\alpha}\partial_{x}^{m}u(\chi_{\epsilon, b}^{2})'\arrow[dl,dashrightarrow, red]\\
		D_{x}^{\alpha}\partial_{x}^{m}u\chi_{\epsilon, b}^{2} \arrow[r, blue]\arrow[d,"D_{x}^{\alpha/2}"]& D_{x}^{3\alpha/2}\partial_{x}^{m}u(\chi_{\epsilon, b}^{2})'  \arrow[dl,dashrightarrow, red]\\
		D_{x}^{3\alpha/2}\partial_{x}^{m}u\chi_{\epsilon, b}^{2}\arrow[d,"D_{x}^{\alpha/2}"]\arrow[r, blue]& D_{x}^{2\alpha}\partial_{x}^{m}u(\chi_{\epsilon, b}^{2})'\arrow[dl,dashrightarrow, red]  \\
		\vdots\arrow[d,"D_{x}^{\alpha/2}"] & \vdots\\
		D_{x}^{(2k-1)\alpha/2}\partial_{x}^{m}u\chi_{\epsilon, b}^{2}\arrow[r, blue]\arrow[d,"D_{x}^{1-\alpha/2}"]& D_{x}^{\alpha k}\partial_{x}^{m}u(\chi_{\epsilon, b}^{2})'\arrow[dl,dashrightarrow, red]\\
		D_{x}^{1-\alpha/2}\partial_{x}^{m}u\chi_{\epsilon, b}^{2}\arrow[r, blue]& \partial_{x}^{m+1}u(\chi_{\epsilon, b}^{2})'\\
		\end{tikzcd}}
\end{center}
As part of this induction process, we shall  assume that for any $\epsilon>0,\, b\geq 5\epsilon,$ $v\geq 0,$ the following holds:
\begin{itemize}
\item[(I)] for   $l=2,3,\cdots,m$\, and \, $j=0,1,\cdots,2k-1$
\begin{equation}\label{kdv9}
\begin{split}
&\sup_{0\leq t\leq T}\left\|\partial_{x}^{l}D_{x}^{\frac{\alpha j }{2}}u\chi_{\epsilon, b}(\cdot+vt)\right\|_{2}^{2}+\left\|D_{x}^{\alpha\left(\frac{j+1}{2}\right)
}\partial_{x}^{l}u\eta_{\epsilon, b}\right\|_{L^{2}_{T}L^{2}_{x}}^{2}+\left\|\mathcal{H}D_{x}^{\alpha\left(
	\frac{j+1}{2}\right)}\partial_{x}^{l}u\eta_{\epsilon, b}^{2}\right\|_{L^{2}_{T}L^{2}_{x}}^{2}\leq c^{*}_{n,j}
\end{split}
\end{equation}
where  ${\displaystyle c^{*}_{l,j}=c^{*}_{l,j}\left(\alpha; \epsilon; T;v;j; \|u_{0}\|_{s_{\alpha},2}; \left\|\partial_{x}^{l}D_{x}^{\frac{\alpha j}{2}}u_{0}\chi_{\epsilon, b}\right\|_{2}\right)>0}.$  
\item[(II)] For $l=2,3,\cdots,m-1$
\begin{equation}\label{kdv30}
\begin{split}
&\sup_{0\leq t\leq T}\left\|\partial_{x}^{l}D_{x}^{1-\frac{\alpha  }{2}}u\chi_{\epsilon, b}(\cdot+vt)\right\|_{2}^{2}+\left\|\partial_{x}^{l+1}u\eta_{\epsilon,b}\right\|_{L^{2}_{T}L^{2}_{x}}^{2}+\left\|\mathcal{H}\partial_{x}^{l}u\eta_{\epsilon,b}\right\|_{L^{2}_{T}L^{2}_{x}}^{2}\leq c^{*}_{l,2k},
\end{split}
\end{equation}
 as usual we indicate the dependence of the parameters involved behind  the constant in the right hand side above. More precisely, 
    $${\displaystyle c^{*}_{l,j}=c^{*}_{l,j}\left(\alpha; \epsilon; T;v;j; \|u_{0}\|_{s_{\alpha},2}; \left\|\partial_{x}^{l}D_{x}^{1-\frac{\alpha}{2}}u_{0}\chi_{\epsilon, b}\right\|_{2}\right)>0}.$$  
\end{itemize}
\begin{flushleft}
	\fbox{{\sc Step 2:}}
\end{flushleft}
Once our induction assumptions are fixed, we proceed as before, that is, we  obtain the weighted energy estimates
\begin{equation}\label{fkdv2}
\begin{split}
&\frac{1}{2}\frac{\mathrm{d}}{\mathrm{d}t}\int_{\mathbb{R}}\left(D_{x}^{\frac{\alpha}{2}}\partial_{x}^{m}u\right)^{2}\chi_{\epsilon, b}^{2}\,\mathrm{d}x
\underbrace{-\frac{v}{2}\int_{\mathbb{R}}\left(D_{x}^{\frac{\alpha}{2}}\partial_{x}^{m}u\right)^{2}\left(\chi_{\epsilon, b}^{2}\right)'\mathrm{d}x}_{B_{1}(t)}\\
&\underbrace{-\int_{\mathbb{R}}\left(D_{x}^{\frac{\alpha}{2}}\partial_{x}^{m}D_{x}^{\alpha}\partial_{x}u\right)D_{x}^{\frac{\alpha}{2}}\partial_{x}^{m}u\chi_{\epsilon, b}^{2}\,\mathrm{d}x}_{B_{2}(t)}\\
&
\underbrace{+\int_{\mathbb{R}}\left(D_{x}^{\frac{\alpha}{2}}\partial_{x}^{m}\left(u\partial_{x}u\right)\right)
D_{x}^{\frac{\alpha}{2}}\partial_{x}^{m}u \chi_{\epsilon, b}^{2}\,\mathrm{d}x=}_{B_{3}(t)}0.
\end{split}
\end{equation}
\S.1 \quad First, by  the induction hypothesis \eqref{kdv9}, we take  $l=m$ and $j=2k-1,$ then $\|B_{1}\|_{1}$ is bounded. More precisely,
\begin{equation}
\begin{split}
\int_{0}^{T}|B_{1}(t)|\mathrm{d}t&\lesssim\int_{0}^{T}\int_{\mathbb{R}}\left(D_{x}^{\frac{\alpha}{2}}\partial_{x}^{m}u\right)^{2}\left(\chi_{\epsilon, b}^{2}\right)'\mathrm{d}x\lesssim c^{*}_{m,1}.
\end{split}
\end{equation}

\S.2\quad Next, as usual in our argument to handle $B_{2},$  we first rewrite it as follows
\begin{equation*}
\begin{split}
B_{2}(t)&=\frac{1}{2}\int_{\mathbb{R}}D_{x}^{\frac{\alpha}{2}}\partial_{x}^{m}u\left[D_{x}^{\alpha}\partial_{x};\chi_{\epsilon, b}^{2}\right]D_{x}^{\frac{\alpha}{2}}\partial_{x}^{m}u\,\mathrm{d}x\\
&=-\frac{1}{2}\int_{\mathbb{R}}D_{x}^{\frac{\alpha}{2}}\partial_{x}^{m}u\left[\mathcal{H}D_{x}^{\alpha+1};\chi_{\epsilon, b}^{2}\right]D_{x}^{\frac{\alpha}{2}}\partial_{x}^{m}u\,\mathrm{d}x.
\end{split}
\end{equation*}
At this point several remarks  shall  be made. More precisely, $B_{2}$ has different representations  according the number $m$  be even or odd.  The full description of these issues is presented below.

\begin{itemize}
	\item[(I)] If $m\in \mathbb{Q}_{1},$  then 
		\begin{equation*}
		B_{2}(t)=-\frac{1}{2}\int_{\mathbb{R}}D_{x}^{m+\frac{\alpha}{2}}u\left[\mathcal{H}D_{x}^{\alpha+1};\chi_{\epsilon, b}^{2}\right]D_{x}^{m+\frac{\alpha}{2}}u\,\mathrm{d}x.
		\end{equation*}
			\item[(II)] If $m\in \mathbb{Q}_{2},$ then 
		\begin{equation*}
			B_{2}(t)=-\frac{1}{2}\int_{\mathbb{R}}\mathcal{H}D_{x}^{m+\frac{\alpha}{2}}u\left[\mathcal{H}D_{x}^{\alpha+1};\chi_{\epsilon, b}^{2}\right]\mathcal{H}D_{x}^{m+\frac{\alpha}{2}}u\,\mathrm{d}x.
		\end{equation*}
\end{itemize}
We  will only  focus our attention on  (II). Nevertheless, the way to proceed when  (I) holds was described in the  case $m=2.$  
 
 As was done in the previous cases, we incorporate the commutator decomposition \eqref{eq8} to obtain 
\begin{equation}\label{kdv10}
\begin{split}
B_{2}(t)&=\frac{1}{2}\int_{\mathbb{R}}\mathcal{H}D_{x}^{m+\frac{\alpha}{2}}uR_{n}(\alpha+1)\mathcal{H}D_{x}^{m+\frac{\alpha}{2}}u\,\mathrm{d}x+\frac{1}{4}\int_{\mathbb{R}}\mathcal{H}D_{x}^{m+\frac{\alpha}{2}}uP_{n}(\alpha+1)\mathcal{H}D_{x}^{m+\frac{\alpha}{2}}u\,\mathrm{d}x\\
&\quad -\frac{1}{4}\int_{\mathbb{R}}D_{x}^{m+\frac{\alpha}{2}}\mathcal{H}u\mathcal{H}P_{n}(\alpha+1)\mathcal{H}D_{x}^{m+\frac{\alpha}{2}}\mathcal{H}u\,\mathrm{d}x\\
&=B_{2,1}(t)+B_{2,2}(t)+B_{2,3}(t),
\end{split}
\end{equation}
for some positive integer $n.$

We fix $n$  satisfying 
\begin{equation*}
2n+1\leq \alpha +1 +2\left(m+\frac{\alpha}{2}\right)\leq2n+3,
\end{equation*}
from this  we obtain $n=m.$ Thus, in view of Plancherel's identity,  H\"{o}lder's inequality and Proposition \ref{propo2} 
\begin{equation}
\begin{split}
|B_{2,1}(t)|&\lesssim  \|\mathcal{H}u(t)\|_{2}\left\|D_{x}^{m+\frac{\alpha}{2}}R_{m}(\alpha+1)D_{x}^{m+\frac{\alpha}{2}}\mathcal{H}u(t)\right\|_{2}\\
&\lesssim\left\|u_{0}\right\|_{2}\left\|\widehat{D_{x}^{2m+2\alpha+1}\left(\chi_{\epsilon,b}^{2}\right)}\right\|_{1}.
\end{split}
\end{equation}
Therefore,
\begin{equation*}
\int_{0}^{T} |B_{2,1}(t)|\,\mathrm{d}t\lesssim T\|u_{0}\|_{2}.
\end{equation*}

In addition to this, the terms in the decomposition \eqref{kdv10} can be written as  
\begin{equation*}
\begin{split}
B_{2,2}(t) 
&=\left(\frac{\alpha+1}{4}\right)\int_{\mathbb{R}} \left(\mathcal{H}D_{x}^{m+\alpha}u\right)^{2}\left(\chi_{\epsilon, b}^{2}\right)'\,\mathrm{d}x\\
&\quad +\left(\frac{\alpha+1}{4}\right)\sum_{d=1}^{m-1}\frac{c_{2d+1}(-1)^{d}}{4^{d}}\int_{\mathbb{R}}\left(\mathcal{H}D_{x}^{m-d+\alpha}u\right)^{2}\left(\chi_{\epsilon, b}^{2}\right)^{(2d+1)}\,\mathrm{d}x\\
&\quad -\left(\frac{\alpha+1}{4}\right)\frac{c_{2m+1}}{4^{m}}\int_{\mathbb{R}}\left(\mathcal{H}D_{x}^{\alpha}u\right)^{2}\left(\chi_{\epsilon, b}^{2}\right)^{(2m+1)}\,\mathrm{d}x\\
&=B_{2,2,1}(t)+\sum_{d\in\mathbb{Q}_{1}(m-1)}B_{2,2,d}(t)+\sum_{d\in\mathbb{Q}_{2}(m-1)}B_{2,2,d}(t)+B_{2,2,m}(t)
\end{split}
\end{equation*}
and 
\begin{equation*}
\begin{split}
B_{2,3}(t) 
&=\left(\frac{\alpha+1}{4}\right)\int_{\mathbb{R}} \left(D_{x}^{m+\alpha}u\right)^{2}\left(\chi_{\epsilon, b}^{2}\right)'\,\mathrm{d}x\\
&\quad +\left(\frac{\alpha+1}{4}\right)\sum_{d=1}^{m-1}\frac{c_{2d+1}(-1)^{d}}{4^{d}}\int_{\mathbb{R}}\left(D_{x}^{m-d+\alpha}u\right)^{2}\left(\chi_{\epsilon, b}^{2}\right)^{(2d+1)}\,\mathrm{d}x\\
&\quad -\left(\frac{\alpha+1}{4}\right)\frac{c_{2m+1}}{4^{m}}\int_{\mathbb{R}}\left(D_{x}^{\alpha}u\right)^{2}\left(\chi_{\epsilon, b}^{2}\right)^{(2m+1)}\,\mathrm{d}x\\
&=B_{2,3,1}(t)+\sum_{d\in  \mathbb{Q}_{1}(m-1)}B_{2,3,d}(t)+\sum_{d\in  \mathbb{Q}_{2}(m-1)}B_{2,3,d}(t)+B_{2,3,m}(t).
\end{split}
\end{equation*}
The terms $B_{2,2,1}$ and $B_{2,3,1}$ are positive and  represent the smoothing effect after integrate in time. Besides, $\|B_{2,2,m}\|_{1}$ and $\|B_{2,3,m}\|_{1}$ are easily  controlled by means of the local theory.

The remainder terms require extra arguments besides local theory to be estimated, we proceed to describe how to handle these terms.

First,  for $d\in \mathbb{Q}_{1}(m-1)$
\begin{equation}\label{kdv11}
\begin{split}
	\int_{0}^{T}|B_{2,2,d}(t)|\,\mathrm{d}t&\lesssim \int_{0}^{T}\int_{\mathbb{R}}\left(\mathcal{H} D_{x}^{m-d+\alpha}u\right)^{2}\chi_{\epsilon/3,b+\epsilon}'\,\mathrm{d}x\,\mathrm{d}t\\
	&\lesssim\int_{0}^{T}\int_{\mathbb{R}}\left(\mathcal{H} D_{x}^{m-d+\alpha}u\right)^{2}\left(\chi_{\epsilon/9,b+10\epsilon/9}^{2}\right)'\,\mathrm{d}x\,\mathrm{d}t\\
	&\leq c^{*}_{m-d,2},
	\end{split}
\end{equation}
while for  $d\in \mathbb{Q}_{2}(m-1)$ 
\begin{equation}\label{kdv12}
\begin{split}
\int_{0}^{T}|B_{2,2,d}(t)|\,\mathrm{d}t&\lesssim \int_{0}^{T}\int_{\mathbb{R}}\left( \partial_{x}^{m-d}D_{x}^{\alpha}u\right)^{2}\chi_{\epsilon/3,b+\epsilon}'\,\mathrm{d}x\,\mathrm{d}t\\
	&\lesssim\int_{0}^{T}\int_{\mathbb{R}}\left( \partial_{x}^{m-d}D_{x}^{\alpha}u\right)^{2}\left(\chi_{\epsilon/9,b+10\epsilon/9}^{2}\right)'\,\mathrm{d}x\,\mathrm{d}t\\
&\leq c^{*}_{m-d,2}.
\end{split}
\end{equation}
The inequalities in \eqref{kdv11} and \eqref{kdv12} are obtained combining  the properties of the weighted functions and \eqref{kdv9}.

Similar arguments can be applied to obtain 
\begin{equation*}
\begin{split}
\int_{0}^{T}|B_{2,3,d}(t)|\,\mathrm{d}t&\lesssim c^{*}_{m-d,2}\quad\mbox{for any}\quad d\in \mathbb{Q}_{1}(m-1)\cup\mathbb{Q}_{2}(m-1).
\end{split}
\end{equation*}
\S.3\quad  The term $B_{3}$ is handled  as in the previous steps, this is, first we decompose it according to the case. This is:
\begin{itemize}
	\item[(I)] if $m$ is even, then 
	\begin{equation*}
	\begin{split} 
	\partial_{x}^{m}D_{x}^{\frac{\alpha}{2}}\left(u\partial_{x}u\right)\chi_{\epsilon,b }&=c_{m}D_{x}^{m+\frac{\alpha }{2}}(u\partial_{x}u)\chi_{\epsilon,b }\\
	&=\frac{c_{m}}{2}\left[D_{x}^{m+\frac{\alpha }{2}};\chi_{\epsilon, b}\right]\partial_{x}\left((u\chi_{\epsilon, b})^{2}+\left(u\widetilde{\phi_{\epsilon, b}}\right)^{2}+u^{2}\psi_{\epsilon}\right)\\
	&\quad -c_{m}\left[D_{x}^{m+\frac{\alpha}{2}};u\chi_{\epsilon, b}\right]\partial_{x}\left(u\chi_{\epsilon, b}+u\phi_{\epsilon, b}+u\psi_{\epsilon}\right)+u\chi_{\epsilon, b}\partial_{x}^{m}D_{x}^{\frac{\alpha }{2}}\partial_{x}u\\
	&=\widetilde{B_{3,1}}(t)+\widetilde{B_{3,2}}(t)+\widetilde{B_{3,3}}(t)+\widetilde{B_{3,4}}(t)+\widetilde{B_{3,5}}(t)+\widetilde{B_{3,6}}(t)+\widetilde{B_{3,7}}(t);
	\end{split}
	\end{equation*}
	\item[(II)] instead for  $m$  odd,
	\begin{equation*}
	\begin{split} 
	\partial_{x}^{m}D_{x}^{\frac{\alpha}{2}}\left(u\partial_{x}u\right)\chi_{\epsilon,b } &=-c_{m}\mathcal{H}D_{x}^{m+\frac{\alpha }{2}}(u\partial_{x}u)\chi_{\epsilon,b }\\
	&=\frac{c_{m}}{2}\left[\mathcal{H}D_{x}^{m+\frac{\alpha }{2}};\chi_{\epsilon, b}\right]\partial_{x}\left((u\chi_{\epsilon, b})^{2}+\left(u\widetilde{\phi_{\epsilon, b}}\right)^{2}+u^{2}\psi_{\epsilon}\right)\\
	&\quad - c_{m}\left[\mathcal{H}D_{x}^{m+\frac{\alpha}{2}};u\chi_{\epsilon, b}\right]\partial_{x}\left(u\chi_{\epsilon, b}+u\phi_{\epsilon, b}+u\psi_{\epsilon}\right)+u\chi_{\epsilon, b}\partial_{x}^{m}D_{x}^{\frac{\alpha }{2}}\partial_{x}u\\
	&=\frac{c_{m}}{2}\mathcal{H}\left[D_{x}^{m+\frac{\alpha }{2}};\chi_{\epsilon, b}\right]\partial_{x}\left((u\chi_{\epsilon, b})^{2}+\left(u\widetilde{\phi_{\epsilon, b}}\right)^{2}+u^{2}\psi_{\epsilon}\right)\\
	&\quad + \frac{c_{m}}{2}\left[\mathcal{H};\chi_{\epsilon, b}\right]D_{x}^{m+\frac{\alpha }{2}}\partial_{x}\left((u\chi_{\epsilon, b})^{2}+\left(u\widetilde{\phi_{\epsilon, b}}\right)^{2}+u^{2}\psi_{\epsilon}\right)\\
	&\quad -c_{m}\mathcal{H}\left[D_{x}^{m+\frac{\alpha}{2}};u\chi_{\epsilon, b}\right]\partial_{x}\left(u\chi_{\epsilon, b}+u\phi_{\epsilon, b}+u\psi_{\epsilon}\right)\\
	&\quad -c_{m}\left[\mathcal{H};u\chi_{\epsilon, b}\right]D_{x}^{m+\frac{\alpha}{2}}\partial_{x}\left(u\chi_{\epsilon, b}+u\phi_{\epsilon, b}+u\psi_{\epsilon}\right)+u\chi_{\epsilon, b}\partial_{x}^{m}D_{x}^{\frac{\alpha }{2}}\partial_{x}u\\
	&=\widetilde{B_{3,1}}(t)+\widetilde{B_{3,2}}(t)+\widetilde{B_{3,3}}(t)+\widetilde{B_{3,4}}(t)+\widetilde{B_{3,5}}(t)+\widetilde{B_{3,6}}(t)+\widetilde{B_{3,7}}(t)\\
	&\quad + \widetilde{B_{3,8}}(t)+\widetilde{B_{3,9}}(t)+\widetilde{B_{3,10}}(t)+\widetilde{B_{3,11}}(t)+\widetilde{B_{3,12}}(t)+ \widetilde{B_{3,13}}(t).
	\end{split}
	\end{equation*}
\end{itemize}
We will focus in the harder case to estimate i.e. when $m$ is odd. The case $m$ even is is simpler. 

For the sake of simplicity we will gather  the terms  according to the  tools used for its estimation without matter the order.

First,   by Lemma \ref{lemma1} is clear that
\begin{equation}
\|\widetilde{B_{3,3l}}\|_{2}\lesssim \|u(t)\|_{\infty}\|u_{0}\|_{2}\quad\mbox{for}\quad l=1,2,3,4.
\end{equation}
A combination of the Lemma  \ref{lema1}, Lemma \ref{kdvlem1}, interpolation and the Calderon's commutator estimate \eqref{eq31}, give us 
\begin{equation}
\begin{split}
\|\widetilde{B_{3,1}}\|_{2}\lesssim \left\|u\right\|_{\infty}\left(\|u_{0}\|_{2}+\left\|D_{x}^{m+\frac{\alpha}{2}}(u\chi_{\epsilon, b})\right\|_{2}\right),
\end{split}
\end{equation} 
\begin{equation}
\begin{split}
\|\widetilde{B_{3,2}}\|_{2}\lesssim \left\|u\right\|_{\infty}\left(\|u_{0}\|_{2}+\left\|D_{x}^{m+\frac{\alpha}{2}}(u\widetilde{\phi_{\epsilon, b}})\right\|_{2}\right),
\end{split}
\end{equation} 
and 
\begin{equation}
\|\widetilde{B_{3,4+l}}\|_{2}\lesssim \left\|u\right\|_{\infty}\|u\|_{s_{\alpha,2}} \quad \mbox{for}\quad l\in \{0,1\}.
\end{equation}
Next,  applying commutator's estimate \eqref{kpdl} leads to  
\begin{equation}\label{kdv26}
\begin{split}
\|\widetilde{B_{3,7}}\|_{2}
&\lesssim\|\partial_{x}(u\chi_{\epsilon, b})\|_{\infty}\left\|D_{x}^{m+\frac{\alpha}{2}}(u\chi_{\epsilon, b})\right\|_{2}
\end{split}
\end{equation}
and  
\begin{equation}\label{kdv25}
\begin{split}
\|\widetilde{B_{3,8}}\|_{2}
&\lesssim \left\|D_{x}^{m+\frac{\alpha}{2}}(u\phi_{\epsilon, b})\right\|_{2}\|\partial_{x}(u\chi_{\epsilon, b})\|_{\infty}+ \left\|D_{x}^{m+\frac{ \alpha}{2}}(u\chi_{\epsilon, b})\right\|_{2}\|\partial_{x}(u\phi_{\epsilon, b})\|_{\infty}.
\end{split}
\end{equation}
In order to bound the remainder terms we  use  \eqref{kpdl} from where we get that 
\begin{equation}\label{kdv24}
\begin{split}
\|\widetilde{B_{3,10}}\|_{2}
&\lesssim \|\partial_{x}(u\chi_{\epsilon, b})\|_{\infty}\left\|D_{x}^{m+\frac{\alpha}{2}}(u\chi_{\epsilon, b})\right\|_{2},
\end{split}
\end{equation}
and
\begin{equation}\label{kdv27}
\begin{split}
\|\widetilde{B_{3,11}}\|_{2}
&\lesssim \|\partial_{x}(u\chi_{\epsilon, b})\|_{\infty}\left\|D_{x}^{m+\frac{\alpha}{2}}(u\phi_{\epsilon, b})\right\|_{2}.
\end{split}
\end{equation}
To finish with the estimates above, we replace   $\widetilde{B_{3,13}}$ into \eqref{fkdv2}, from that   it is obtained a term which can be estimated by using  integration by parts, Gronwall's inequality and the Strichartz's estimate in Theorem \ref{lt}.

Notwithstanding all the terms above have been estimated, several issues have to be clarified. First, the non-local behavior of operator $D_{x}^{s}$ for any $s\notin2\mathbb{N}$ do not allow us to know where  force us to localize the function $u$ and its derivatives in the places where the regularity is available.
 
For this reason, the term $\left\|D_{x}^{m+\frac{\alpha}{2}}(u\chi_{\epsilon, b})\right\|_{2}$ appearing  in the estimates above must be bounded, we do this  by  using   the decomposition trick,  this is 
\begin{equation}\label{kdv28}
\begin{split}
D_{x}^{m+\frac{\alpha}{2}}(u\chi_{\epsilon, b})=\chi_{\epsilon, b}D_{x}^{m+\frac{\alpha}{2}}u+\left[
D_{x}^{m+\frac{\alpha}{2}};\chi_{\epsilon, b}\right]\left(u\chi_{\epsilon, b}+u\phi_{\epsilon,b}+u\psi_{\epsilon}\right),
\end{split}
\end{equation}
and if we assume that $m$ is odd the decomposition \eqref{kdv28} have to be rewritten as follows
\begin{equation}
\begin{split}
D_{x}^{m+\frac{\alpha}{2}}(u\chi_{\epsilon, b})&=c_{m}\left[\mathcal{H}; \chi_{\epsilon, b}\right]\partial_{x}^{m}D_{x}^{\frac{\alpha}{2}}u+c_{m}\mathcal{H}\left(\chi_{\epsilon, b}\partial_{x}^{m}D_{x}^{\frac{\alpha}{2}}u\right)\\
&\quad +
\left[
D_{x}^{m+\frac{\alpha}{2}};\chi_{\epsilon, b}\right]\left(u\chi_{\epsilon, b}+u\phi_{\epsilon,b}+u\psi_{\epsilon}\right),
\end{split}
\end{equation} 
where $c_{m}$ is a non-null constant.
A direct application of Calderon's commutator estimate \eqref{eq31}, Lemma \ref{kdvlem1} and   interpolation  imply that 
\begin{equation}
\begin{split}
\left\|D_{x}^{m+\frac{\alpha}{2}}(u\chi_{\epsilon, b})\right\|_{2}&\lesssim \left\|u\right\|_{s_{\alpha},2} +\left\|\chi_{\epsilon, b}\partial_{x}^{m}D_{x}^{\frac{\alpha}{2}}u\right\|_{2}+\left\|D_{x}^{m-1+\frac{\alpha}{2}}(u\chi_{\epsilon, b})\right\|_{2}+\left\|D_{x}^{m-1+\frac{\alpha}{2}}(u\phi_{\epsilon, b})\right\|_{2}\\
&\lesssim \left\|u\right\|_{s_{\alpha},2} +\left\|\chi_{\epsilon, b}\partial_{x}^{m}D_{x}^{\frac{\alpha}{2}}u\right\|_{2}+\left\|\partial_{x}^{m}(u\chi_{\epsilon, b})\right\|_{2}+\left\|\partial_{x}^{m}(u\phi_{\epsilon, b})\right\|_{2}\\
&=A_{1}+A_{2}+A_{3}+A_{4}.
\end{split}
\end{equation}
It is straightforward to see that $A_{1}$ is bounded, this is  a consequence of the local theory.  The term, $A_{3}$  is handled by  first noticing that
\begin{equation}\label{kdv29}
\chi_{\epsilon/5, \epsilon}(x)\chi_{\epsilon, b}(x)=\chi_{\epsilon,b }(x)\quad \mbox{for all}\quad x\in \mathbb{R},
\end{equation}
then combining Lemma \ref{lema2} and Young's inequality
\begin{equation}
\begin{split}
\left\|\partial_{x}^{m}(u\chi_{\epsilon, b})\right\|_{2}\lesssim \left\|\partial_{x}^{m}u\chi_{\epsilon, b}\right\|_{2}+\sum_{d=2}^{m-1} \gamma_{m,d}\left\|\partial_{x}^{d}u\chi_{\epsilon/5, \epsilon}\right\|_{2}+\|u\|_{s_{\alpha},2}.
\end{split}
\end{equation} 
Considering a  slightly modification of the   weights involved in  \eqref{kdv29} it is possible to  follow the same arguments as above  to estimate $A_{4},$ however,  in order to avoid repetitions we will omit the calculations. 

Further, after integrate in time and use the inductive hypothesis, this is  \eqref{kdv9}, then  
\begin{equation*}
\begin{split}
\left\|A_{3}\right\|_{1}\lesssim \sum_{d=2}^{m}\widetilde{\gamma_{m,d}}\,\left(c^{*}_{d,0}\right)^{1/2}+ \|u\|_{L^{\infty}_{T}H^{s_{\alpha}}_{x}}.
\end{split}
\end{equation*}
Similarly, combining properties of the weighted functions  and the  induction hypothesis \eqref{kdv30} produces 
\begin{equation*}
\left\|A_{4}\right\|_{1}\lesssim \sum_{d=2}^{m}\lambda_{m,d}\,\left(c^{*}_{d,2k}\right)^{1/2}+ \|u\|_{L^{\infty}_{T}H^{s_{\alpha}}_{x}}.
\end{equation*}

Finally,  to estimate   $\left\|D_{x}^{m+\frac{\alpha}{2}}(u\phi_{\epsilon, b})\right\|_{2}$
we decouple it by using  Theorem \ref{thm11} as follows
\begin{equation}
\begin{split}
\left\|D_{x}^{m+\frac{\alpha}{2} }(u\phi_{\epsilon,b})\right\|_{L^{2}_{T}L_{x}^{2}}&\lesssim \left\|D_{x}^{m+\frac{\alpha}{2}}\phi_{\epsilon, b}\right\|_{L^{\infty}_{T}L^{4}_{x}}\|u_{0}\|_{2}+\sum_{d\in \mathbb{Q}_{1}(m)}\frac{1}{d!}\left\|\partial_{x}^{d}\phi_{\epsilon, b}D_{x}^{m-d+\frac{\alpha}{2}}u\right\|_{L^{2}_{T}L^{2}_{x}}\\
&\quad  +\sum_{d\in \mathbb{Q}_{2}(m)}\frac{1}{d!}\left\|\partial_{x}^{d}\phi_{\epsilon, b}\mathcal{H}D_{x}^{m-d+\frac{\alpha}{2}}u\right\|_{L^{2}_{T}L^{2}_{x}}\\
&\lesssim \|u_{0}\|_{2}+\sum_{d\in \mathbb{Q}_{1}(m)}\frac{1}{d!}\left\|\varphi_{\epsilon/8,b+\epsilon/4}D_{x}^{m-d+\frac{\alpha}{2}}u\right\|_{L^{2}_{T}L^{2}_{x}}\\
&\quad +\sum_{d\in \mathbb{Q}_{2}(m)}\frac{1}{d!}\left\|\varphi_{\epsilon/8,b+\epsilon/4}\mathcal{H}D_{x}^{m-d+\frac{\alpha}{2}}u\right\|_{L^{2}_{T}L^{2}_{x}}\\
&\lesssim \|u_{0}\|_{2}+\sum_{d\in \mathbb{Q}_{1}(m)}\frac{1}{d!}\left\|\eta_{\epsilon/24,b+7\epsilon/24}D_{x}^{m-d+\frac{\alpha}{2}}u\right\|_{L^{2}_{T}L^{2}_{x}}\\
&\quad +\sum_{d\in \mathbb{Q}_{2}(m)}\frac{1}{d!}\left\|\eta_{\epsilon/24,b+7\epsilon/24}\mathcal{H}D_{x}^{m-d+\frac{\alpha}{2}}u\right\|_{L^{2}_{T}L^{2}_{x}}\\
&\lesssim \|u_{0}\|_{2} +\sum_{d=0}^{m}\frac{1}{d!}\left(c^{*}_{m-d,1}\right)^{1/2},
\end{split}
\end{equation}
where the last inequality is a consequence of inductive hypothesis \eqref{kdv9}.

This step finish gathering together the estimates corresponding to $B_{1}, B_{2}$ and $B_{3}$ that combined with Gronwall's inequality and integration in time
\begin{equation}
\begin{split}
&\sup_{0\leq t\leq T}\left\|\partial_{x}^{m}D_{x}^{\frac{\alpha}{2}}u\chi_{\epsilon, b}(\cdot+vt)\right\|_{2}^{2}+\left\|D_{x}^{\alpha}\partial_{x}^{m}u\eta_{\epsilon,b}\right\|_{L^{2}_{T}L^{2}_{x}}^{2}+ \left\|\mathcal{H}D_{x}^{\alpha}\partial_{x}^{m}u\eta_{\epsilon,b}\right\|_{L^{2}_{T}L^{2}_{x}}^{2}\lesssim c^{*}_{m,2},
\end{split}
\end{equation}
where as usual we indicate the full dependence of the parameters behind the constant i.e. 
${\displaystyle c^{*}_{m,2}=c^{*}_{m,2}\left(\alpha; \epsilon; T;v; \|u_{0}\|_{s_{\alpha},2}; \left\|D_{x}^{\frac{\alpha}{2}}\partial_{x}^{m}u_{0}\chi_{\epsilon, b}\right\|_{2}\right)>0.}$ 
\begin{flushleft}
	\fbox{{\sc Step $2k+1:$}}
\end{flushleft}
The corresponding energy  weighted estimate for this step is
\begin{equation}\label{kdv23}
\begin{split}
&\frac{1}{2}\frac{\mathrm{d}}{\mathrm{d}t}\int_{\mathbb{R}}\left(\partial_{x}^{m}D_{x}^{1-\frac{\alpha}{2}}u\right)^{2}\chi_{\epsilon, b}^{2}\,\mathrm{d}x\underbrace{-\frac{v}{2}\int_{\mathbb{R}}\left(\partial_{x}^{m}D_{x}^{1-\frac{\alpha}{2}}u\right)^{2}\left(\chi_{\epsilon, b}^{2}\right)'\,\mathrm{d}x}_{B_{1}(t)}\\
&\underbrace{-\int_{\mathbb{R}}\left(\partial_{x}^{m}D_{x}^{1-\frac{\alpha}{2}}D_{x}^{\alpha}\partial_{x}u\right)\partial_{x}^{m}D_{x}^{1-\frac{\alpha}{2}}u \chi_{\epsilon, b}^{2}\,\mathrm{d}x}_{B_{2}(t)}\\
&\underbrace{+\int_{\mathbb{R}}\left(\partial_{x}^{m}D_{x}^{1-\frac{\alpha}{2}}(u\partial_{x}u)\right)\partial_{x}^{m}D_{x}^{1-\frac{\alpha}{2}}u \chi_{\epsilon, b}^{2}\,\mathrm{d}x}_{B_{3}(t)}=0.
\end{split}
\end{equation}
\S.1\quad We shall point out that   the representation of $B_{1}$ may change according to the case. More precisely,
\begin{itemize}
\item[(I)] If $m\in \mathbb{Q}_{1}$ then 
\begin{equation*}
B_{1}(t)= -v\int_{\mathbb{R}}\left(D_{x}^{m+1-\frac{\alpha}{2}}u\right)^{2}\eta_{\epsilon,b}^{2}\,\mathrm{d}x=-v\left\|D_{x}^{m+1-\frac{\alpha}{2}}u\eta_{\epsilon,b}\right\|_{2}^{2}.
\end{equation*}
\item[(II)]If $m\in \mathbb{Q}_{2}$ then 
\begin{equation*}
B_{1}(t)= -v\int_{\mathbb{R}}\left(\mathcal{H}D_{x}^{m+1-\frac{\alpha}{2}}u\right)^{2}\eta_{\epsilon,b}^{2}\,\mathrm{d}x=-v\left\|\mathcal{H}D_{x}^{m+1-\frac{\alpha}{2}}u\eta_{\epsilon,b}\right\|_{2}^{2}.
\end{equation*}
\end{itemize}

For the sake of simplicity we will study the case (I), it will be clear from the context how to proceed in the case (II).

Since 
\begin{equation*}
\begin{split}
D_{x}^{m+\alpha k}(u\eta_{\epsilon,b})&=D_{x}^{m+\alpha k}u\eta_{\epsilon,b}+\left[D_{x}^{m+\alpha k}; \eta_{\epsilon,b}\right](u\chi_{\epsilon, b}+u\phi_{\epsilon, b}+u\psi_{\epsilon}),\\
\end{split}
\end{equation*}
then combining  \eqref{kpdl}, Lemma \ref{lemma1} and interpolation 
\begin{equation*}
\begin{split}
\left\|D_{x}^{m+\alpha k }(u\eta_{\epsilon,b})\right\|_{2}&\leq \left\|D_{x}^{m+\alpha k }u\eta_{\epsilon,b}\right\|_{2}+\left\|\left[D_{x}^{m+\alpha k };\eta_{\epsilon,b}\right](u\chi_{\epsilon, b}+u\phi_{\epsilon, b}+u\psi_{\epsilon})\right\|_{2}\\
&\lesssim\left\|D_{x}^{m+\alpha k }u\eta_{\epsilon,b}\right\|_{2}+\left\|\partial_{x}^{m}(u\chi_{\epsilon,b })\right\|_{2}+\left\|\partial_{x}^{m}(u\phi_{\epsilon,b })\right\|_{2}+\|u_{0}\|_{2}.
\end{split}
\end{equation*}
Since 
\begin{equation}\label{kdv21}
\chi_{\epsilon/5, \epsilon}(x)\chi_{\epsilon, b}(x)=\chi_{\epsilon,b }(x)\quad \mbox{for all}\quad x\in \mathbb{R},
\end{equation}
 then combining Lemma \ref{lema2} and Young's inequality give us 
\begin{equation}
\begin{split}
\left\|\partial_{x}^{m}(u\chi_{\epsilon,b })\right\|_{2}& 
\lesssim \left\|\partial_{x}^{m}u\chi_{\epsilon, b}\right\|_{2}+\sum_{d=2}^{m-1}\gamma_{m,d}\left\|\partial_{x}^{d}u\chi_{\epsilon/5, \epsilon}\right\|_{2}+\|u\|_{s_{\alpha},2}.
\end{split}
\end{equation}
Hence, by \eqref{kdv9}
\begin{equation}
\left\|\partial_{x}^{m}u\chi_{\epsilon,b }\right\|_{L^{\infty}_{T}L^{2}_{x}}\lesssim \left(c^{*}_{m,1}\right)
^{1/2}+\sum_{d=0}^{m-1}\gamma_{m,d}\left(c^{*}_{d,1}\right)^{1/2}+\|u\|_{L^{\infty}_{T}H^{s_{\alpha}}_{x}}.
\end{equation}
and 

\begin{equation}
\begin{split}
\left\|\partial_{x}^{m}(u\phi_{\epsilon, b})\right\|_{2}&\lesssim \sum_{d=2}^{m}\gamma_{m,d}\left\|\partial_{x}^{d}u\chi_{\epsilon/20, \epsilon/4}\right\|_{2}+\|u\|_{s_{\alpha},2}.
\end{split}
\end{equation}
Analogously
\begin{equation}\label{kdv22}
\left\|\partial_{x}^{m}(u\phi_{\epsilon, b})\right\|_{L^{\infty}_{T}L^{2}_{x}}\lesssim \sum_{d=2}^{m}\gamma_{m,d} \left(c^{*}_{d,1}\right)^{1/2}+\|u\|_{L^{\infty}_{T}H^{s_{\alpha}}_{x}}.
\end{equation}

Gathering the estimated terms above allow us to  obtain 
\begin{equation}
\left\|D_{x}^{m+\alpha k }(u\eta_{\epsilon,b})\right\|_{L^{2}_{T}L^{2}_{x}}<\infty.
\end{equation}

Therefore, by interpolation 
\begin{equation}
\left\|\partial_{x}^{m}D_{x}^{1-\frac{\alpha}{2}}(u\eta_{\epsilon,b})\right\|_{L^{2}_{T}L^{2}_{x}}\lesssim \|u_{0}\|_{2}+\left\|D_{x}^{m+\alpha k }(u\eta_{\epsilon,b})\right\|_{L^{2}_{T}L^{2}_{x}}.
\end{equation}
With  this information at hand, we can  estimate the term with the regularity required, this is  achieved localizing the commutator expression as follows
\begin{equation*}
\partial_{x}^{m}D_{x}^{1-\frac{\alpha}{2}}u\eta_{\epsilon,b}=\partial_{x}^{m}D_{x}^{1-\frac{\alpha}{2}}(u\eta_{\epsilon,b})-\left[\partial_{x}^{m}D_{x}^{1-\frac{\alpha}{2}}; \eta_{\epsilon,b}\right]\left(u\chi_{\epsilon, b}+u\phi_{\epsilon, b}+u\psi_{\epsilon}\right),
\end{equation*}
that  by using a similar argument as before  we obtain 
\begin{equation}
\begin{split}
\left\|\partial_{x}^{m}D_{x}^{1-\frac{\alpha}{2}}u \eta_{\epsilon,b}\right\|_{L^{2}_{T}L^{2}_{x}}<\infty.
\end{split}
\end{equation}

So that,
\begin{equation}
\begin{split}
\int_{0}^{T}|B_{1}(t)|\mathrm{d}t&=v\int_{0}^{T}\left\|\partial_{x}^{m}D_{x}^{1-\frac{\alpha}{2}}u\eta_{\epsilon,b}\right\|_{2}^{2}\,\mathrm{d}t<\infty.\\
\end{split}
\end{equation}
\S.2\quad As was evidenced at the beginning of this step, several case shall be considered. In fact, the term $B_{2}$ is represented in different ways according  be the case 
\begin{itemize}
	\item[(I)] If $m\in \mathbb{Q}_{1},$  then 
	\begin{equation*}
	B_{2}(t)=-\frac{1}{2}\int_{\mathbb{R}}D_{x}^{m+1-\frac{\alpha}{2}}u\left[\mathcal{H}D_{x}^{\alpha+1};\chi_{\epsilon, b}^{2}\right]D_{x}^{m+1-\frac{\alpha}{2}}u\,\mathrm{d}x.
	\end{equation*}
		\item[(II)] If $m\in \mathbb{Q}_{2}$ then 
	\begin{equation*}
	B_{2}(t)=-\frac{1}{2}\int_{\mathbb{R}}\mathcal{H}D_{x}^{m+1-\frac{\alpha}{2}}u\left[\mathcal{H}D_{x}^{\alpha+1};\chi_{\epsilon, b}^{2}\right]\mathcal{H}D_{x}^{m+1-\frac{\alpha}{2}}u\,\mathrm{d}x
	\end{equation*}
\end{itemize}
To show  how to proceed in the case that $m$ is odd, we will use the expression above and the commutator decomposition as we have previously described  to obtain 
\begin{equation*}
\begin{split}
B_{2}(t) 
&=\frac{1}{2}\int_{\mathbb{R}}\mathcal{H}D_{x}^{m+1-\frac{\alpha}{2}}uR_{n}(\alpha+1)\mathcal{H}D_{x}^{m+1-\frac{\alpha}{2}}u\,\mathrm{x}\\
&\quad +\frac{1}{4}\int_{\mathbb{R}}\mathcal{H}D_{x}^{m+1-\frac{\alpha}{2}}uP_{n}(\alpha+1)\mathcal{H}D_{x}^{m+1-\frac{\alpha}{2}}u\,\mathrm{d}x\\
&\quad-\frac{1}{4}\int_{\mathbb{R}} \mathcal{H}D_{x}^{m+1-\frac{\alpha}{2}}u\mathcal{H}P_{n}(\alpha+1)\mathcal{H}D_{x}^{m+1-\frac{\alpha}{2}}\mathcal{H}u\\
&=B_{2,1}(t)+B_{2,2}(t)+B_{2,3}(t).
\end{split}
\end{equation*}
 Is choosing $n$  according to the rule
\begin{equation*}
2n+1\leq \alpha +1 +2\left(m+1-\frac{\alpha}{2}\right) \leq 2n+3
\end{equation*}
 which clearly implies $n=m.$
  
 Next, an application of Proposition \ref{propo2} implies  that the remainder term $R_{m}(\alpha+1)$ is bounded in $L^{2}_{x}$ which let us handled $B_{2,1}$ as follows
 \begin{equation*}
 \begin{split}
 B_{2,1}(t)&=\frac{1}{2}\int_{\mathbb{R}} \mathcal{H}D_{x}^{m+1-\frac{\alpha}{2}}uR_{m}(\alpha+1)D_{x}^{m+1-\frac{\alpha}{2}}\mathcal{H}u\,\mathrm{d}x\\
 &= \frac{1}{2}\int_{\mathbb{R}} \mathcal{H}uD_{x}^{m+1-\frac{\alpha}{2}}R_{m}(\alpha+1)D_{x}^{m+1-\frac{\alpha}{2}}\mathcal{H}u\,\mathrm{d}x\\
 \end{split}
 \end{equation*} 
 Combining H\"{o}lder's inequality and 
 \begin{equation*}
 \begin{split}
 |B_{2,1}(t)|& \lesssim \|\mathcal{H}u(t)\|_{2}\left\|D_{x}^{m+1-\frac{\alpha}{2}}R_{m}(\alpha+1)D_{x}^{m+1-\frac{\alpha}{2}}\mathcal{H}u\right\|_{2}\\
 &\lesssim \|u_{0}\|_{2}^{2}\left\|\widehat{D_{x}^{2m+3}(\chi_{\epsilon, b}^{2})}\right\|_{1}.
  \end{split}
 \end{equation*}
 From this it is easy to  obtain  that 
 \begin{equation*}
 \int_{0}^{T}|B_{2,1}(t)|\,\mathrm{d}t\leq c.
 \end{equation*}
 The control in $B_{2,1}$ allow us to  fix the  number of terms in $B_{2,2}$ and $B_{2,3}.$ 
 \begin{equation}\label{kdv13}
\begin{split}
B_{2,2}(t) 
&= \left(\frac{\alpha+1}{4}\right)\int_{\mathbb{R}} \left(D_{x}^{m+1}u\right)^{2}\left(\chi_{\epsilon, b}^{2}\right)' \,\mathrm{d}x\\
&\quad +\left(\frac{\alpha+1}{4}\right)\sum_{d=1}^{m-1}\frac{c_{2d+1}(-1)^{d}}{4^{d}}\int_{\mathbb{R}} \left(D_{x}^{m+1-d}u\right)^{2}\left(\chi_{\epsilon, b}^{2}\right)^{(2d+1)} \,\mathrm{d}x\\
&\quad -\left(\frac{\alpha+1}{4}\right)\left(\frac{c_{2m+1}}{4^{m}}\right)\int_{\mathbb{R}}\left(D_{x}u\right)^{2}\left(\chi_{\epsilon, b}^{2}\right)^{(2m+1)}\,\mathrm{d}x\\
&=B_{2,2,1}(t)+\sum_{d=2}^{m-1}B_{2,2,d}(t)+B_{2,2,m+1}(t)
\end{split}
\end{equation}
 and 
\begin{equation}\label{kdv14}
\begin{split}
B_{2,3}(t) 
&= \left(\frac{\alpha+1}{4}\right)\int_{\mathbb{R}} \left(\mathcal{H}D_{x}^{m+1}u\right)^{2}\left(\chi_{\epsilon, b}^{2}\right)' \,\mathrm{d}x\\
&\quad +\left(\frac{\alpha+1}{4}\right)\sum_{d=1}^{m-1}\frac{c_{2d+1}(-1)^{d}}{4^{d}}\int_{\mathbb{R}} \left(\mathcal{H}D_{x}^{m+1-d}u\right)^{2}\left(\chi_{\epsilon, b}^{2}\right)^{(2d+1)} \,\mathrm{d}x\\
&\quad -\left(\frac{\alpha+1}{4}\right)\left(\frac{c_{2m+1}}{4^{m}}\right)\int_{\mathbb{R}}\left(\mathcal{H}D_{x}u\right)^{2}\left(\chi_{\epsilon, b}^{2}\right)^{(2m+1)}\,\mathrm{d}x\\
&=B_{2,3,1}(t)+\sum_{d=2}^{m-1}B_{2,3,d}(t)+B_{2,3,m+1}(t).
\end{split}
\end{equation}
Concerning the terms  $B_{2,2,1}$ and $B_{2,3,1},$  after integrate in time they grant the smoothing effect. The remainders terms in \eqref{kdv13} and \eqref{kdv14}  have to be estimated separately.

In this sense,  for $d\in \left\{1,2,\cdots,m-1\right\}$
\begin{equation*}
\begin{split}
\int_{0}^{T}|B_{2,l+1,d}(t)|\,\mathrm{d}t& \lesssim \int_{0}^{T}\int_{\mathbb{R}}\left(\mathcal{H}^{l}D_{x}^{m+1-d}u\right)^{2}\,\chi_{\epsilon/3, b+\epsilon}'\,\mathrm{d}x\,\mathrm{d}t\quad \mbox{for}\quad l\in \{0,1\}.\\
\end{split}
\end{equation*}
Then a combination of  \eqref{kdv9} and  properties of the weighted functions imply that 
\begin{equation*}
\begin{split}
\int_{0}^{T}|B_{2,l+1,d}(t)|\,\mathrm{d}t\lesssim c^{*}_{m+1-d,2k+1}\quad\mbox{for any}\quad l\in \{0,1\},\quad d=2,3,\cdots,m-2.
\end{split}
\end{equation*}
The cases not considered in the estimation above are handled by means of the local theory.

\S.3\quad 
 The term   $B_{2},$ can be rewritten according be the case:
\begin{itemize}
	\item[(I)] If $ m\in \mathbb{Q}_{1},$ then  there exists a  non-null constant $c=c(m)$ such that 
	\begin{equation*}
	\begin{split}
	\partial_{x}^{m}D_{x}^{1- \frac{ \alpha}{2}}(u\partial_{x}u)\chi_{\epsilon,b }
	&=c_{m}\left[D_{x}^{m+1-\frac{ \alpha}{2}};\chi_{\epsilon, b}\right]\partial_{x}\left((u\chi_{\epsilon, b})^{2}+(u\widetilde{\phi_{\epsilon, b}})^{2}+u^{2}\psi_{\epsilon}\right)\\
	&\quad + c_{m}\left[D_{x}^{m+1-\frac{ \alpha}{2}};u\chi_{\epsilon, b}\right]\partial_{x}\left((u\chi_{\epsilon, b})+(u\phi_{\epsilon, b})
	+(u\psi_{\epsilon})\right)\\
	&\quad +u\chi_{\epsilon, b}\partial_{x}^{m}D_{x}^{1-\frac{\alpha}{2}}\partial_{x}u.
	\end{split}
	\end{equation*}
	\item[(II)] If $m\in \mathbb{Q}_{2}$ then  there exists a non-null constant  $c=c(m)$ such that

	\begin{equation*}
	\begin{split}
	\partial_{x}^{m}D_{x}^{1-\frac{\alpha}{2}}(u\partial_{x}u)\chi_{\epsilon,b }
	&=c_{m}\mathcal{H}\left[D_{x}^{m+1-\frac{\alpha}{2}};\chi_{\epsilon, b}\right]\partial_{x}\left((u\chi_{\epsilon, b})^{2}+\left(u\widetilde{\phi_{\epsilon, b}}\right)^{2}+u^{2}\psi_{\epsilon}\right)\\
	&\quad +c_{m}\left[\mathcal{H};\chi_{\epsilon, b}\right]D_{x}^{m+1-\frac{\alpha}{2}}\partial_{x}\left((u\chi_{\epsilon, b})^{2}+\left(u\widetilde{\phi_{\epsilon, b}}\right)^{2}+u^{2}\psi_{\epsilon}\right)\\
	&\quad + c_{m}\mathcal{H}\left[D_{x}^{m+1-\frac{\alpha}{2}};u\chi_{\epsilon, b}\right]\partial_{x}\left((u\chi_{\epsilon, b})+(u\phi_{\epsilon, b})+(u\psi_{\epsilon})\right)\\
	&\quad + c_{m}\left[\mathcal{H} ;u\chi_{\epsilon, b}\right]D_{x}^{m+1-\frac{\alpha}{2}}\partial_{x}\left((u\chi_{\epsilon, b})+(u\phi_{\epsilon, b})+(u\psi_{\epsilon})\right)\\
	&\quad+u\chi_{\epsilon, b}\partial_{x}^{m}D_{x}^{1-\frac{\alpha}{2}}\partial_{x}u\\
	&=\widetilde{B_{3,1}}(t)+\widetilde{B_{3,2}}(t)+\widetilde{B_{3,3}}(t)+\widetilde{B_{3,4}}(t)+\widetilde{B_{3,5}}(t)+\widetilde{B_{3,6}}(t)\\
	&\quad +\widetilde{B_{3,7}}(t)+\widetilde{B_{3,8}}(t)+\widetilde{B_{3,9}}(t)+\widetilde{B_{3,10}}(t)+\widetilde{B_{3,11}}(t)+\widetilde{B_{3,12}}(t)\\
	&\quad +\widetilde{B_{3,13}}(t).
	\end{split}
	\end{equation*}
\end{itemize}
To show how to proceed in the case $m$ odd we 
combine Lemma \ref{lema1} and Lemma \ref{kdvlem1} to obtain
\begin{equation}\label{kdv15}
\begin{split}
\|\widetilde{B_{3,1}}\|_{2}
&\lesssim \left\|u\right\|_{\infty}\left(\|u_{0}\|_{2}+\left\|D_{x}^{m+1-\frac{\alpha}{2}}(u\chi_{\epsilon, b})\right\|_{2}\right),
\end{split}
\end{equation}
and 
\begin{equation}\label{kdv16}
\begin{split}
\|\widetilde{B_{3,2}}\|_{2} 
&\lesssim\left\|u\right\|_{\infty}\left(\|u_{0}\|_{2}+\left\|D_{x}^{m+1-\frac{\alpha}{2}}(u\widetilde{\phi_{\epsilon, b}})\right\|_{2}\right).
\end{split}
\end{equation}
Since the weighted functions $\chi_{\epsilon, b}$ and $\psi_{\epsilon}$ satisfy the hypothesis of Lemma  \ref{lemma1} then   
\begin{equation*}
\begin{split}
\|\widetilde{B_{3,3l}}\|_{2} 
&\lesssim \|u\|_{\infty}\|u_{0}\|_{2}\quad \mbox{for }\quad l =1,2,3,4.
\end{split}
\end{equation*}
On the other hand,  the  Calderon's commutator estimate \eqref{eq31} and Lemma \ref{lema1} lead to
\begin{equation*}
\begin{split}
\|\widetilde{B_{3,4}}\|_{2}
&\lesssim \|u\|_{1,2}\|u\|_{\infty},
\end{split}
\end{equation*}
and
\begin{equation*}
\begin{split}
\|\widetilde{B_{3,5}}\|_{2}
&\lesssim \|u\|_{1,2}\|u\|_{\infty}.
\end{split}
\end{equation*}
Moreover, by the commutator estimate  \eqref{kpdl} we have  
\begin{equation}\label{kdv17}
\begin{split}
\|\widetilde{B_{3,7}}\|_{2}
&\lesssim\|\partial_{x}(u\chi_{\epsilon, b})\|_{\infty}\left\|D_{x}^{m+1-\frac{\alpha}{2}}(u\chi_{\epsilon, b})\right\|_{2}
\end{split}
\end{equation}
and
\begin{equation}\label{kdv18}
\begin{split}
\|\widetilde{B_{3,8}}\|_{2}
&\lesssim \left\|D_{x}^{m+1- \frac{\alpha}{2}}(u\phi_{\epsilon, b})\right\|_{2}\|\partial_{x}(u\chi_{\epsilon, b})\|_{\infty}+ \left\|D_{x}^{m+1-\frac{ \alpha}{2}}(u\chi_{\epsilon, b})\right\|_{2}\|\partial_{x}(u\phi_{\epsilon, b})\|_{\infty}.
\end{split}
\end{equation}
Furthermore, an application of \eqref{kpdl} yields  
\begin{equation}\label{kdv19}
\begin{split}
\|\widetilde{B_{3,10}}\|_{2}
&\lesssim \|\partial_{x}(u\chi_{\epsilon, b})\|_{\infty}\left\|D_{x}^{m+1-\frac{\alpha}{2}}(u\chi_{\epsilon, b})\right\|_{2},
\end{split}
\end{equation}
and
\begin{equation}\label{kdv20}
\begin{split}
\|\widetilde{B_{3,11}}\|_{2}
&\lesssim \|\partial_{x}(u\chi_{\epsilon, b})\|_{\infty}\left\|D_{x}^{m+1-\frac{\alpha}{2}}(u\phi_{\epsilon, b})\right\|_{2}.
\end{split}
\end{equation}
Notice that in the inequalities \eqref{kdv15}-\eqref{kdv16} and \eqref{kdv17}-\eqref{kdv20} there are several terms which have been not  estimated yet. 

Firstly, 
\begin{equation*}
\begin{split}
D_{x}^{m+1-\frac{\alpha}{2}}(u\chi_{\epsilon, b})&=c_{m}\partial_{x}^{m}\mathcal{H}D_{x}^{1-\frac{\alpha}{2}}u\chi_{\epsilon, b}+\left[D_{x}^{m+1-\frac{\alpha}{2}}; \chi_{\epsilon,b }\right]\left(u\chi_{\epsilon, b}+u\phi_{\epsilon, b}+u\psi_{\epsilon}\right)\\
&=\mathcal{H}\left(\chi_{\epsilon, b}\partial_{x}^{m}D_{x}^{1-\frac{\alpha}{2}}u\right)
-\left[\mathcal{H}; \chi_{\epsilon,b}\right]\partial_{x}^{m}D_{x}^{1-\frac{\alpha}{2}}u\\
&\quad+\left[D_{x}^{m+1-\frac{\alpha}{2}}; \chi_{\epsilon,b }\right]\left(u\chi_{\epsilon, b}+u\phi_{\epsilon, b}+u\psi_{\epsilon}\right),
\end{split}
\end{equation*}
where $c_{m}$ is a non-null constant.

Since the arguments  to estimate the expression on the right  have been  previously used, we will only indicate the tools used.

Combining the Calderon's commutator estimate \eqref{eq31}, \eqref{kpdl} and interpolation imply that 
\begin{equation*}
\begin{split}
\left\|D_{x}^{m+1-\frac{\alpha}{2}}(u\chi_{\epsilon, b})\right\|_{2}&\lesssim \left\|\partial_{x}^{m}D_{x}^{1-\frac{\alpha}{2}}u\chi_{\epsilon, b}\right\|_{2}+\|u\|_{s_{\alpha},2}+ \|\partial_{x}^{m}\left(u\chi_{\epsilon, b}\right)\|_{2}+\|\partial_{x}^{m}\left(u\phi_{\epsilon, b}\right)
\|_{m}.
\end{split}
\end{equation*}
Notice that the first term on the right hand side is the quantity to be estimated. The third and fourth term  are handled by using similar arguments as those  described in \eqref{kdv21}-\eqref{kdv22}. 

Next, the condition 
\begin{equation}
\alpha\in \left[ \frac{2}{2k+1},\frac{1}{k}\right)\Rightarrow m+1-\frac{\alpha}{2}\leq m+\alpha k
\end{equation}
is used to obtain information from the previous cases. This is achieved  combining Theorem \ref{thm11} and \eqref{kdv9}, that is,  
\begin{equation}
\begin{split}
\left\|D_{x}^{m+\alpha k}(u\phi_{\epsilon,b})\right\|_{L^{2}_{T}L_{x}^{2}}&\lesssim \left\|D_{x}^{m+\alpha k}\phi_{\epsilon, b}\right\|_{L^{\infty}_{T}L^{4}_{x}}\|u_{0}\|_{2}+\sum_{d\in \mathbb{Q}_{1}(m)}\frac{1}{d!}\left\|\partial_{x}^{d}\phi_{\epsilon, b}D_{x}^{m+\alpha k-d}u\right\|_{L^{2}_{T}L^{2}_{x}}\\
&\quad  +\sum_{d\in \mathbb{Q}_{2}(m)}\frac{1}{d!}\left\|\partial_{x}^{d}\phi_{\epsilon, b}\mathcal{H}D_{x}^{m+\alpha k-d}u\right\|_{L^{2}_{T}L^{2}_{x}}\\
&\lesssim \|u_{0}\|_{2}+\sum_{d\in \mathbb{Q}_{1}(m)}\frac{1}{d!}\left\|\varphi_{\epsilon/8,b+\epsilon/4}D_{x}^{m+\alpha k-d}u\right\|_{L^{2}_{T}L^{2}_{x}}\\
&\quad +\sum_{d\in \mathbb{Q}_{2}(m)}\frac{1}{d!}\left\|\varphi_{\epsilon/8,b+\epsilon/4}\mathcal{H}D_{x}^{m+\alpha k-d}u\right\|_{L^{2}_{T}L^{2}_{x}}\\
&\lesssim \|u_{0}\|_{2}+\sum_{d\in \mathbb{Q}_{1}(m)}\frac{1}{d!}\left\|\eta_{\epsilon/24,b+7\epsilon/24}D_{x}^{m+\alpha k-d}u\right\|_{L^{2}_{T}L^{2}_{x}}\\
&\quad +\sum_{d\in \mathbb{Q}_{2}(m)}\frac{1}{d!}\left\|\eta_{\epsilon/24,b+7\epsilon/24}\mathcal{H}D_{x}^{m+\alpha k-d}u\right\|_{L^{2}_{T}L^{2}_{x}}\\
&\lesssim \|u_{0}\|_{2} +\sum_{d=0}^{m}\frac{1}{d!}\left(c^{*}_{m+\alpha k -d,2k-1}\right)^{1/2}.
\end{split}
\end{equation}
So that  interpolation provide the  bound
\begin{equation}
\left\|\partial_{x}^{m}D_{x}^{1-\frac{\alpha}{2}}\left(u\phi_{\epsilon, b}\right)\right\|_{L^{2}_{T}L^{2}_{x}}\lesssim \|u_{0}\|_{2}+ \left\|D_{x}^{m+\alpha k}(u\phi_{\epsilon,b})\right\|_{L^{2}_{T}L_{x}^{2}}.
\end{equation}
Analogously is estimated $\left\|\partial_{x}^{m}D_{x}^{1-\frac{\alpha}{2}}\left(u\widetilde{\phi_{\epsilon, b}}\right)\right\|_{L^{2}_{T}L^{2}_{x}}.$

Inserting   $\widetilde{B_{3,13}}$ into \eqref{kdv23}  it is obtained a term which can be estimated by integration by parts, Gronwall's inequality and the Strichartz's estimate in Theorem \ref{lt}, i.e $\partial_{x}u\in L^{1}\left([0,T]:L^{\infty}(\mathbb{R})\right).$

Finally, gathering together the estimates  corresponding to this step combined  with  Gronwall's inequality and integration in time,  we obtain the desired estimate, this is, for any $\epsilon>0,\, b\geq 5\epsilon$ and $v\geq 0,$
\begin{equation*}
\begin{split}
&\sup_{0\leq t \leq T}\left\|\partial_{x}^{m}D_{x}^{1-\frac{\alpha}{2}}u\chi_{\epsilon, b}(\cdot+vt)\right\|_{2}^{2}+\left\|\partial_{x}^{m+1}u\eta_{\epsilon, b}\right\|_{L^{2}_{T}L^{2}_{x}}^{2}+\left\|\mathcal{H}\partial_{x}^{m+1}u\eta_{\epsilon, b}\right\|_{L^{2}_{T}L^{2}_{x}}^{2}\lesssim c^{*}_{m,2k+1},
\end{split}
\end{equation*}
where ${\displaystyle c^{*}_{m,2k+1}=c^{*}_{m,2k+1}\left(\alpha;k; \epsilon; T;v; \|u_{0}\|_{s_{\alpha},2}; \left\|D_{x}^{1-\frac{\alpha}{2}}\partial_{x}^{m}u_{0}\chi_{\epsilon, b}\right\|_{2}\right)>0.}$

This estimate finish the 2-induction process.

Throughout the proof we  have always  assumed as much regularity as possible on the function $u$, nevertheless  the way to proceed in the general case  involves the  device of regularization of Bona, Smith \cite{BS}.

More precisely,  we consider  initial data  $u_{0}\in H^{s_{\alpha}^{+}}(\mathbb{R}),$  then   we regularize the initial data $u_{0}^{\mu}=\rho_{\mu}*u_{0},$ where $\rho$ is a positive smooth function with compact support, more precisely $\rho\in C^{\infty}_{0}(\mathbb{R})$ with $\supp(\rho)\subset (-1,1)$ and $\|\rho\|_{1}=1.$

For $\mu>0$ we   define the family  
\begin{equation}
\rho_{\mu}(x)=\mu^{-1}\rho\left(\frac{x}{\mu}\right)\quad x\in \mathbb{R}.
\end{equation}

The solution $u^{\mu}$ associated to the IVP \eqref{e1} with initial data $u_{0}^{\mu}$ satisfies 
\begin{equation}
u^{\mu}\in C\left([0,T]: H^{\infty}(\mathbb{R})\right).
\end{equation} 

Applying the results obtained in the section of the 2-inductive process to the function $u^{\mu}$ allows us conclude   that, for any $\epsilon>0,$\, $b\geq 5\epsilon,$\, $v\geq 0,$  the following holds:
\begin{itemize}  
\item[(a)] for $l=2,3,\cdots,m$\, and \, $j=0,1,\cdots,2k-1$ 
 \begin{equation}
\begin{split}
&\sup_{0\leq t \leq T}\left\|\partial_{x}^{l}D_{x}^{\frac{\alpha j}{2}}u^{\mu}\chi_{\epsilon,b }(\cdot+vt)\right\|_{2}^{2}+\left\|\partial_{x}^{l}D_{x}^{\alpha\left(\frac{j+1}{2}\right)}u^{\mu}\eta_{\epsilon,b}\right\|_{L^{2}_{T}L^{2}_{x}}^{2}\\
&\quad +\left\|\partial_{x}^{l}\mathcal{H}D_{x}^{\alpha\left(\frac{j+1}{2}\right)}u^{\mu}\eta_{\epsilon,b}\right\|_{L^{2}_{T}L^{2}_{x}}^{2}\leq c^{*}_{l,j}
\end{split}
\end{equation}
where ${\displaystyle c^{*}_{l,j}=c^{*}_{l,j}\left(\alpha;k; \epsilon; T;v;l;j; \left\|u_{0}^{\mu}\right\|_{s_{\alpha},2}; \left\|\partial_{x}^{l}D_{x}^{\frac{\alpha j}{2}}u_{0}^{\mu}\chi_{\epsilon, b}\right\|_{2}\right)>0};$ 
 
\item[(b)] for $l=2,3,\cdots,m$
\begin{equation}
\begin{split}
&\sup_{0\leq t\leq T} \left\|\partial_{x}^{l}D_{x}^{1-\frac{\alpha}{2}}u^{\mu}(t)\chi_{\epsilon, b}(\cdot+vt)\right\|_{2}+\left\|\partial_{x}^{l+1}u^{\mu}\eta_{\epsilon,b}\right\|_{L^{2}_{T}L^{2}_{x}}^{2}\\
&\quad +\left\|\mathcal{H}\partial_{x}^{l+1}u^{\mu}\eta_{\epsilon,b}\right\|_{L^{2}_{T}L^{2}_{x}}^{2}\leq c^{*}_{l,2k+1},
\end{split}
\end{equation}
where  ${\displaystyle c^{*}_{l,2k+1}=c^{*}_{l,2k+1}\left(\alpha; k;\epsilon; T;v;n; \left\|u_{0}^{\mu}\right\|_{s_{\alpha},2}; \left\|\partial_{x}^{n}D_{x}^{1-\frac{\alpha}{2}} \partial_{x}^{2}u_{0}^{\mu}\chi_{\epsilon, b}\right\|_{2}\right)>0.}$
\end{itemize}

Next, we prove that the constants  $c^{*}_{l,j} $ and  $c^{*}_{l,2k+1}$  are independent of the parameter $\mu.$ 

First notice  that 
\begin{equation}
\left\|u_{0}^{\mu}\right\|_{s_{\alpha},2}\leq \|u_{0}\|_{s_{\alpha},2}\left\|\widehat{\rho_{\mu}}\right\|_{\infty}\leq \|u_{0}\|_{s_{\alpha},2}.
\end{equation}
 Next,   the  weighted function $\chi_{\epsilon, b}(x)=0$  for  $x\leq \epsilon,$ therefore  when we restrict   $\mu\in  (0,\epsilon)$ it follows by Young's inequality that
 \begin{equation}
 \begin{split}
 \int_{\epsilon}^{\infty}\left(\partial_{x}^{l}D_{x}^{\frac{\alpha j }{2}}u_{0}^{\mu}\right)^{2}\mathrm{d}x&= \int_{\epsilon}^{\infty}\left(\rho_{\mu}*\partial_{x}^{l}D_{x}^{\frac{\alpha j }{2}}u_{0}\mathbb{1}_{[0,\infty)}\right)^{2}\mathrm{d}x\\
 &\leq \|\rho_{\mu}\|_{1}\left\|\partial_{x}^{l}D_{x}^{\frac{\alpha j}{2}}u_{0}\right\|_{L^{2}((0,\infty))}\\
 &=\left\|\partial_{x}^{l}D_{x}^{\frac{\alpha j}{2}}u_{0}\right\|_{L^{2}((0,\infty))},
 \end{split}
 \end{equation}
  $l=2,3,\cdots,m$\, and \, $j=0,1,\cdots,2k-1.$
 
 Similarly can be proved that for $\mu\in  (0,\epsilon)$ the following inequality holds 
 \begin{equation}
 \left\|\partial_{x}^{l}D_{x}^{1-\frac{\alpha}{2}} \partial_{x}^{2}u_{0}^{\mu}\chi_{\epsilon, b}\right\|_{2}^{2}\leq \left\|\partial_{x}^{l}D_{x}^{1-\frac{\alpha}{2}} \partial_{x}^{2}u_{0}\right\|_{L^{2}((0,\infty))}\quad \mbox{for}\quad l=2,3,\cdots,m.
 \end{equation}
  Also, as part of the argument, the continuous dependence upon the initial data is  used, this is 
 \begin{equation}
 \sup_{0\leq t \leq T }\left\|u^{\mu}(t)-u(t)\right\|_{s_{\alpha},2} \underrel[c]{\mu \to 0}{\longrightarrow} 0.
 \end{equation}
 The proof finish combining  the remarks underlined above, the independence of the constants $c^{*}_{l,j} $ and  $c^{*}_{l,2k+1},$ of the parameter $\mu,$ these joint with a  weak compactness  argument and Fatou's Lemma allow  to conclude the proof.  
\section{Acknowledgments}
The results of this paper are part of the author's Ph.D dissertation at  IMPA-Brazil. He gratefully  acknowledges  the encouragement and assistance  of his advisor, Prof. F. Linares. He also express appreciation for the careful reading of the manuscript done   by  O. Ria\~{n}o.

\end{document}